\newtheorem{theorem}{Theorem}[section]
\newtheorem{remark}[theorem]{Remark}
\newtheorem{proposition}[theorem]{Proposition}
\newtheorem{ex}[theorem]{Example}
\newenvironment{example}{\begin{ex} \em }{\em \end{ex}}
\newtheorem{definition}[theorem]{Definition}
\newtheorem{assumption}[theorem]{Assumption}
\newcommand{\norm}  [1]{\ensuremath{\left  \|       #1  \right \|       }}
\newcommand{\cb}    [1]{\ensuremath{\left  \{      #1  \right \}       }}
\newcommand{\abs  } [1]{\ensuremath{\left  |       #1  \right |        }}
\newcommand{\st} {\ensuremath{|\;}}
\newcommand{\epi} {{\rm epi \,}}
\newcommand{\cl}  {{\rm cl  \,}}
\newcommand{\bd}  {{\rm bd \,}}
\newcommand{\Int} {{\rm int \,}}
\newcommand{\conv}  {{\rm conv \,}}
\newcommand{\cone}{{\rm cone\,}}
\newcommand{\E}{\mathcal{E}}
\newcommand{\G}{\mathcal{G}}
\newcommand{\R}{\mathbb{R}}
\newcommand{\N}{\mathbb{N}}
\newcommand{\smz}{\setminus\{0\}}
\renewcommand{\E}{\mathbb E}
\renewcommand{\P}{\mathbb P}
\newcommand{\co}{{\rm co \,}}
\newcommand{\wMinc}{{\rm wMin}_C\,}
\newcommand{\nwMinc}{{\rm (w)Min}_C\,}
\newcommand{\MaxK}{{\rm Max}_K\,}
\author{Andreas L\"{o}hne  \thanks{Martin-Luther-Universit{\"a}t Halle-Wittenberg, NWF II, Departement of Mathematics, 06099 Halle (Saale), andreas.loehne@mathematik.uni-halle.de} 
\and Birgit Rudloff \thanks{Princeton University, Department of Operations Research and Financial Engineering; and Bendheim Center for Finance, Princeton, NJ 08544, USA, brudloff@princeton.edu. Research supported by NSF award DMS-1007938.} 
\and Firdevs Ulus  \thanks{Princeton University, Department of Operations Research and Financial Engineering, Princeton, NJ 08544, USA, fulus@princeton.edu}
}
\title{Primal and Dual Approximation Algorithms for Convex Vector Optimization Problems}
\date{\today}
\begin{document}
\maketitle

\begin{abstract} \noindent
Two approximation algorithms for solving convex vector optimization problems (CVOPs) are provided. Both algorithms solve the CVOP and its geometric dual problem simultaneously. The first algorithm is an extension of Benson's outer approximation algorithm, and the second one is a dual variant of it. Both algorithms provide an inner as well as an outer approximation of the (upper and lower) images. Only one scalar convex program has to be solved in each iteration. We allow objective and constraint functions that are not necessarily differentiable, allow solid pointed polyhedral ordering cones, and relate the approximations to an appropriate $\epsilon$-solution concept. 
Numerical examples are provided. 

\medskip

\noindent
{\bf Keywords:} Vector optimization, multiple objective optimization, convex programming,
duality, algorithms, outer approximation.

\medskip

\noindent
{\bf MSC 2010 Classification:} 90C29, 90C25, 90-08, 91G99

\end{abstract}

\section{Introduction}

A variety of methods have been developed in the last decades to solve or approximately solve vector optimization problems. As in scalar optimization, only special problem classes are tractable. One of the most studied classes consists of linear vector optimization problems (LVOPs). There are many solution methods for LVOPs in the literature, see e.g. the survey paper by Ehrgott and Wiecek \cite{survey_ehrgott} and the references therein. The multi-objective simplex method, for instance, evaluates the set of all \emph{efficient solutions} in the variable space (or decision space). Interactive methods, for instance, compute a sequence of efficient solutions depending on the decision maker's preferences. Benson \cite{benson} proposed an outer approximation algorithm in order to generate the set of all \emph{efficient values} in the objective space. He motivates this method by observing that typically the dimension of the objective space is much smaller than the dimension of the variable space, decision makers tend to choose a solution based on objective values rather than variable values, and often many efficient solutions are mapped to a single efficient point in the objective space. A solution concept for LVOPs which takes into account these ideas has been introduced in \cite{lohne}. Several variants of Benson's algorithm for LVOPs have been developed, see e.g. \cite{shaoLVOP, dualalgorithm, lohne, ehr_dual}, having in common that at least two LPs need to be solved in each iteration. Independently in \cite{lvop} and \cite{csirmaz}, an improved variant for LVOPs has been proposed where only one LP has to be solved in each iteration. 

Convex vector optimization problems (CVOPs) are more difficult to solve than LVOPs. There are methods which deal with CVOPs, and specific subclasses of it. We refer the reader to the survey paper by Ruzika and Wiecek \cite{survey_ruzika} for a classification of approximation methods for a CVOP. Recently, Ehrgott, Shao, and Sch\"obel \cite{ehrgott} developed a Benson type algorithm for bounded CVOPs, motivated by the same arguments as given above for LVOPs. They extended Benson's algorithm to approximate the set of all efficient values in the objective space from LVOPs to CVOPs. In this paper, we generalize and simplify this approximation algorithm and we introduce a dual variant of it. To be more detailed, compared to \cite{ehrgott}, we 

\vspace{-0.3cm}
\begin{enumerate}[(i)]
	\setlength{\itemsep}{-2mm}
\item allow objective and constraint functions that are not necessarily differentiable (provided certain non-differentiable scalar problems can be solved), 
\item allow more general ordering cones,
\item use a different measure for the approximation error (as in \cite{lvop}), which allows the algorithms to be applicable to a larger class of problems,
\item obtain at no additional cost a finer approximation by including suitable points in the primal and dual inner approximations throughout the algorithm,
\item reduce the overall cost by simplifying the algorithm in the sense that only one convex optimization problem has to be solved in each iteration instead of two,
\item relate the approximation to an $\epsilon$-solution concept involving infimum attainment,
\item present additionally a dual algorithm that provides an alternative approximation/ $\epsilon$-solution.
\end{enumerate}

\vspace{-0.3cm}
This paper is structured as follows. Section~\ref{secPrelim} is dedicated to basic concepts and notation. In Section~\ref{secCVO}, the convex vector optimization problem, its geometric dual, and solution concepts are introduced. Furthermore, the geometric duality results for CVOPs are stated and explained. In Section~\ref{secalgo}, an extension of Benson's algorithm for CVOPs and a dual variant of the algorithm are provided. 
Numerical examples are given in Section~\ref{secex}. 

\section{Preliminaries}
\label{secPrelim}

For a set $A \subseteq \R^q$, we denote by $\Int A$, $\cl A$, $\bd A$, $\conv A$, $\cone A$, respectively the interior, closure, boundary, convex hull and the conic hull of $A$. A polyhedral convex set $A \subseteq \R^q$ can be defined as the intersection of finitely many half spaces, that is,
\vspace{-0.1cm}
\begin{align}\label{Hrep}
A = \bigcap_{i=1}^r \{ y \in \R^q : (w^i)^Ty \geq \gamma_i \}
\end{align}

\vspace{-0.1cm}\noindent
for some $r \in \N$, $w^1, \ldots, w^r \in \R^q \setminus \{0\}$, and $\gamma_1, \ldots, \gamma_r \in \R$. 
Every non-empty polyhedral convex set $A$ can also be written as
\vspace{-0.1cm}
\begin{align}\label{Vrep}
A = \conv\{x^1,\ldots, x^s\}+\cone \{k^1,\ldots,k^t\},
\end{align}

\vspace{-0.1cm}\noindent
where $s \in \N \setminus \{0\}, t \in \N $, each $x^i \in \R^q$ is a point, and each $k^j \in \R^q \setminus \{0\}$ is a \emph{direction} of $A$. Note that $k \in \R^q\setminus\{0\}$ is called a direction of $A$ if $A + \{\alpha k \in \R^q: \alpha > 0 \} \subseteq A$. The set of points $\{ x^1,\ldots, x^s \}$ together with the set of directions $\{k^1, \ldots , k^r \}$ are called the \emph{generators} of the polyhedral convex set $A$. Representation~\eqref{Vrep} of A is called the V-representation (or generator representation) whereas representation~\eqref{Hrep} of A by half-spaces is called H-representation (or inequality representation). A subset $F$ of a convex set $A$ is called an \emph{exposed face} of $A$ if there exists a supporting hyperplane $H$ to $A$, with $F = A \cap H$. 

A convex cone $C$ is said to be \emph{solid}, if it has a non-empty interior; \emph{pointed} if it does not contain any line; and \emph{non-trivial} if $\{0\} \subsetneq C \subsetneq \R^q$. A non-trivial convex pointed cone C defines a partial ordering $\leq_C$ on $\R^q$: $v \leq_C w$ if and only if $w - v \in C$. Let $C\subseteq \R^q$ be a non-trivial convex pointed cone and $X \subseteq \R^n$ a convex set. A function $\Gamma:X \rightarrow \R^q$ is said to be \emph{$C$-convex} if $\Gamma(\alpha x+(1-\alpha)y) \leq_C \alpha \Gamma(x)+(1-\alpha)\Gamma(y)$ holds for all $x,y \in X$, $\alpha \in [0,1]$, see e.g. \cite[Definition 6.1]{luc}. A point $y \in A$ is called \emph{$C$-minimal element} of A if $\left( \{y\} -  C \setminus \{0\}\right) \cap A = \emptyset$. If the cone $C$ is solid, then a point $y \in A$ is called \emph{weakly $C$-minimal element} if $ \left( \{y\} - \Int C \right) \cap A = \emptyset$. The set of all (weakly) $C$-minimal elements of $A$ is denoted by $\nwMinc(A)$. The set of (weakly) $C$-maximal elements is defined by ${\rm (w)Max}_C\,(A):={\rm (w)Min}_{-C}\,(A)$. The (positive) dual cone of $C$ is the set $C^+:=\cb{z \in \R^q\st \forall y \in C: z^T y \geq 0}$.

\section{Convex Vector Optimization}
\label{secCVO}

\subsection{Problem Setting and Solution Concepts}
\label{subsetP}

A convex vector optimization problem (CVOP) with polyhedral ordering cone $C$ is to
\vspace{-0.1cm}
\begin{align*} \label{(P)}
 \text{minimize~} \Gamma(x) \text{~with respect to~} \leq_C \text{~subject to~}  g(x) \leq_D 0, \tag{P}
\end{align*}

\vspace{-0.1cm}\noindent
where $C\subseteq\R^q$, and $D\subseteq\R^m$ are non-trivial pointed convex ordering cones with nonempty interior, $X\subseteq \R^n$ is a convex set, the vector-valued objective function $\Gamma: X \rightarrow \R^q$ is $C$-convex, and the constraint function $g: X \rightarrow \R^m$ is $D$-convex (see e.g. \cite{luc}). Note that the feasible set $\mathcal{X}:=\{x\in X: g(x)\leq_D 0 \} \subseteq X \subseteq\R^n$ of \eqref{(P)} is convex. Throughout we assume that \eqref{(P)}  is feasible, i.e., $\mathcal{X}\neq \emptyset$. The image of the feasible set is defined as $\Gamma(\mathcal{X}) = \{ \Gamma(x) \in \R^q : x \in \mathcal{X}\}$. The set 
\begin{equation}\label{upim1}
 \mathcal{P} := \cl(\Gamma(\mathcal{X})+C)	
\end{equation}
is called the \emph{upper image} of \eqref{(P)} (or {\em upper closed extended image of \eqref{(P)}}, see \cite{geometric}). Clearly, $\mathcal{P}$ is convex and closed.

\begin{definition}\label{bdd}
\eqref{(P)} is said to be \emph{bounded} if $\mathcal{P} \subseteq \{y\} + C$ for some $y \in \R^q$.
\end{definition}

The following definition describes a solution concept for complete-lattice-valued optimization problems which was introduced in \cite{freshlook}. It applies to the special case of vector optimization (to be understood in a set-valued framework). The solution concept consists of two components, minimality and infimum attainment. Here we consider the special case based on the complete lattice $\G(\R^q,C):=\{A \in \R^q: A=\cl\co(A+C)\}$ with respect to the ordering $\supseteq$, see e.g. \cite{lagrange}, which stays in the background in order to keep the notation simple.

\begin{definition}[\cite{lagrange,freshlook}] \label{soln}
A point $\bar{x}\in \mathcal{X}$ is said to be a \emph{(weak) minimizer} for \eqref{(P)} if $\Gamma(\bar{x})$ is a (weakly) $C$-minimal element of $\Gamma(\mathcal{X})$. 
A nonempty set $\mathcal{\bar{X}}\subseteq \mathcal{X}$ is called an \emph{infimizer of \eqref{(P)}} if 
$\cl \conv (\Gamma(\mathcal{\bar{X}})+C) = \mathcal{P}$.
An infimizer $\mathcal{\bar{X}}$ of \eqref{(P)} is called \emph{(weak) solution} to \eqref{(P)} if it consists of only (weak) minimizers.
\end{definition}

In \cite{lohne} this concept was adapted to linear vector optimization problems, where one is interested in solutions which consist of only finitely many minimizers. In the unbounded case, this requires to work with both points and directions in $\R^q$. For a (bounded) CVOP, the requirement that a solution consists of only finitely many minimizers is not adequate, since it is not possible in general to represent the upper image by finitely many points (and directions). However, a finite representation is possible in case of approximate solutions. To this end, we extend the idea of (finitely generated) $\epsilon$-solutions of LVOPs, which was introduced in Remark $4.10$ in \cite{lvop}, to the setting of bounded CVOPs. An infimizer has to be replaced by a {\em finite $\epsilon$-infimizer}. As we only consider bounded problems here, we do not need to deal with directions. It is remarkable that an $\epsilon$-solution provides both an inner and an outer polyhedral approximation of the upper image by finitely many minimizers.
Throughout this paper let $c \in \Int C$ be fixed.

\begin{definition}\label{weaksoln}
For a bounded problem \eqref{(P)}, a nonempty finite set $\mathcal{\bar{X}}\subseteq \mathcal{X}$ is called a \emph{finite $\epsilon$-infimizer of \eqref{(P)}} if 
\begin{equation} \label{eqweaksoln}
\conv \Gamma(\mathcal{\bar{X}})+C-\epsilon \{c\} \supseteq \mathcal{P}.
\end{equation}
A finite $\epsilon$-infimizer $\mathcal{\bar{X}}$ of \eqref{(P)} is called a \emph{finite (weak) $\epsilon$-solution} to \eqref{(P)} if it consists of only (weak) minimizers.
\end{definition}

Note that if $\bar{\mathcal{X}}$ is a finite (weak) $\epsilon$-solution, we have the following inner and outer approximation of the upper image
\begin{align*}
\conv \Gamma(\bar{\mathcal{X}})+C-\epsilon\{c\} \supseteq \mathcal{P} \supseteq \conv \Gamma(\bar{\mathcal{X}})+C.
\end{align*}

For some parameter vector $w \in \R^q$, the convex program 
\begin{align} \label{P1}  \tag{P$_1(w)$}
\min \left\{w^T\Gamma(x) : x \in X,\; g(x) \leq 0 \right\}  
\end{align}
is the well-known {\em weighted sum scalarization} of \eqref{(P)}. Using the Lagrangian 
\begin{equation*}
L_w: X \times\R^m \to \R,\qquad L_w(x,u):=w^T\Gamma(x)+u^Tg(x)	
\end{equation*}
we define the dual program
\begin{align} \label{D1}  \tag{D$_1(w)$}
\max \left\{\phi_w (u) : u \in \R^m_+ \right\}  
\end{align}
with the dual objective function $\phi_w(u):=\inf_{x\in X} L_w(x,u)$. We close this section with a well-known scalarization result, see e.g. \cite{jahn,luc}. 
\begin{proposition} \label{dualprop1}
Let $w \in C^+\setminus\{0\}$. An optimal solution $x^w$ of \eqref{P1} is a weak minimizer of \eqref{(P)}.
\end{proposition}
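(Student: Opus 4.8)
The plan is to argue by contradiction, reducing weak minimality to a single positivity inequality for the dual cone. Suppose $x^w$ is optimal for \eqref{P1} but fails to be a weak minimizer of \eqref{(P)}. By Definition~\ref{soln} together with the definition of a weakly $C$-minimal element, this means $\Gamma(x^w)$ is not a weakly $C$-minimal element of $\Gamma(\mathcal{X})$, so the set $\of{\{\Gamma(x^w)\} - \Int C} \cap \Gamma(\mathcal{X})$ is nonempty. Hence there is a feasible $\bar{x} \in \mathcal{X}$ with $d := \Gamma(x^w) - \Gamma(\bar{x}) \in \Int C$. Note that the feasible region of \eqref{P1} coincides with $\mathcal{X}$, so such a $\bar{x}$ is admissible in the scalarized problem.

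The key step, which I expect to be the only nontrivial point, is the elementary fact that
\[
w \in C^+ \setminus \{0\}, \quad d \in \Int C \quad \Longrightarrow \quad w^T d > 0 .
\]
Here I would reason as follows. Since $d \in \Int C$, the point $d - \eps w$ lies in $C$ for all sufficiently small $\eps > 0$. Because $w \in C^+$, this gives $w^T(d - \eps w) \geq 0$, that is, $w^T d \geq \eps \norm{w}^2$. As $w \neq 0$, the right-hand side is strictly positive, whence $w^T d > 0$. This is the place where both hypotheses on $w$ are genuinely used: membership in $C^+$ supplies nonnegativity of $w^T$ on $C$, while $w \neq 0$ upgrades it to strict positivity on $\Int C$.

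Applying this fact to $d = \Gamma(x^w) - \Gamma(\bar{x})$ yields $w^T \Gamma(\bar{x}) < w^T \Gamma(x^w)$. Since $\bar{x} \in \mathcal{X}$ is feasible for \eqref{P1}, this contradicts the optimality of $x^w$. Therefore no such $\bar{x}$ exists, $\Gamma(x^w)$ is weakly $C$-minimal in $\Gamma(\mathcal{X})$, and $x^w$ is a weak minimizer of \eqref{(P)}, completing the argument. The proof uses neither the relative interior of $C^+$ nor any differentiability or closedness properties of $\Gamma$, so it applies verbatim in the nonsmooth setting the paper targets.
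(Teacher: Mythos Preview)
Your argument is correct. The paper does not supply its own proof of Proposition~\ref{dualprop1}; it simply labels the statement a ``well-known scalarization result'' and refers to standard texts (Jahn, Luc). Your contradiction argument---showing that $w \in C^+\setminus\{0\}$ and $d \in \Int C$ force $w^T d > 0$ via the openness of $\Int C$---is exactly the standard proof one finds in those references, so there is nothing to contrast.
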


\subsection{Geometric Duality}

Geometric duality for vector optimization problems has first been introduced for LVOPs in \cite{geometric_lp}. The idea behind geometric duality in the linear case is to consider a dual problem with a lower image being dual to the upper image $\mathcal{P}$ in the sense of duality of convex polyhedra. Recently, Heyde \cite{geometric} extended geometric duality to the case of CVOPs. 

We next define the \emph{geometric dual problem} of \eqref{(P)}. Recall that $c\in \Int C$ is fixed. Further, we fix $c^1, \ldots, c^{q-1} \in\R^q$ such that the vectors $c^1, \ldots, c^{q-1}, c$ are linearly independent. Using the nonsingular matrix $T:=(c^1,\ldots, c^{q-1}, c)$, we define 
\begin{align*}
w(t):=\left(\left( t_1, \ldots, t_{q-1}, 1 \right) T^{-1}\right)^T
\end{align*}
for $t\in\R^q$. For arbitrary $w,t \in \R^q$ we immediately obtain the useful statement 
\begin{equation}\label{eq_wt}
 (w(t)= w,\; t_q=1) \; \iff \; (t=T^T w,\; c^T w=1).
\end{equation}
The geometric dual of \eqref{(P)} is given by
\begin{align*}\label{(D)}
\text{maximize~} D^*(t) \text{~~~~with respect to~~} \leq_K \text{~~subject to~}  w(t) \in C^+ \tag{D},
\end{align*}
where the objective function is 
$$  D^*(t) := (t_1,\ldots, t_{q-1},\inf_{x\in\mathcal{X}}\left[w(t)^T\Gamma(x)\right])^T,$$
 and the ordering cone is $K:= \R_+(0,0,\ldots,0,1)^T = \R_+e^q$. 

Similar to the upper image for the primal problem \eqref{(P)}, we define the \emph{lower image} for \eqref{(D)} as $\mathcal{D} := D^*(\mathcal{T})-K$, where $\mathcal{T} := \{t\in \R^q : w(t)\in C^+\}$ is the feasible region of \eqref{(D)}. The lower image can be written as
\begin{align*}
\mathcal{D} := \left\{ t\in\R^q : w(t) \in C^+, t_q \leq \inf_{x\in\mathcal{X}}\left[ w(t)^T\Gamma(x)\right]\right\}.
\end{align*}

\begin{proposition}\label{dualprop0} 
Let $t \in \mathcal{T}$ and $w := w(t)$. If \eqref{P1} has a finite optimal value $y^w \in \R$, then $D^*(t)$ is $K$-maximal in $\mathcal{D}$ and $D^*(t) = (t_1,\ldots,t_{q-1},y^w) \in \bd\mathcal{D}$.
\end{proposition}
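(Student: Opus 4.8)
The plan is to unfold the definitions and exploit one structural fact: the map $w(\cdot)$ ignores the last coordinate. First I would make the formula for $D^*(t)$ explicit. Since $w=w(t)$ and the feasible set of \eqref{P1} is exactly $\mathcal{X}=\{x\in X: g(x)\leq_D 0\}$, the finite optimal value satisfies $y^w=\inf_{x\in\mathcal{X}}w^T\Gamma(x)=\inf_{x\in\mathcal{X}}w(t)^T\Gamma(x)$, which is precisely the last component of $D^*(t)$. Hence $D^*(t)=(t_1,\dots,t_{q-1},y^w)$, which already gives the second assertion. Checking the two defining conditions of $\mathcal{D}$ then shows $D^*(t)\in\mathcal{D}$: the condition $w(t)\in C^+$ holds because $t\in\mathcal{T}$, and the condition $t_q\leq\inf_{x\in\mathcal{X}}w(t)^T\Gamma(x)$ holds with equality since $t_q=y^w$.

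The crucial observation I would isolate is that $w(s)$ depends only on $s_1,\dots,s_{q-1}$, not on $s_q$, because of the fixed entry $1$ in the last slot of $(s_1,\dots,s_{q-1},1)$ in the definition of $w(\cdot)$. Consequently, for every $\lambda>0$ the point $D^*(t)+\lambda e^q$ has the same first $q-1$ coordinates as $D^*(t)$, so $w(D^*(t)+\lambda e^q)=w(t)=w\in C^+$. Its last coordinate equals $y^w+\lambda$, and since $y^w+\lambda>y^w=\inf_{x\in\mathcal{X}}w^T\Gamma(x)$, the second defining inequality of $\mathcal{D}$ is violated. Therefore $D^*(t)+\lambda e^q\notin\mathcal{D}$ for all $\lambda>0$.

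It remains to read off both conclusions from this. Unwinding $\MaxK(\mathcal{D})={\rm Min}_{-K}(\mathcal{D})$ with $K=\R_+e^q$, a point $y\in\mathcal{D}$ is $K$-maximal exactly when $(\{y\}+K\setminus\{0\})\cap\mathcal{D}=\emptyset$, i.e.\ when $y+\lambda e^q\notin\mathcal{D}$ for all $\lambda>0$; the previous paragraph establishes this for $y=D^*(t)$, giving $K$-maximality. For the boundary claim, the same exterior points $D^*(t)+\lambda e^q\to D^*(t)$ as $\lambda\downarrow 0$ show that no neighbourhood of $D^*(t)$ is contained in $\mathcal{D}$, so $D^*(t)\notin\Int\mathcal{D}$; combined with $D^*(t)\in\mathcal{D}$ this yields $D^*(t)\in\bd\mathcal{D}$.

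I expect no genuine obstacle here: the argument is a direct consequence of the definitions, and the only subtlety is the independence of $w(\cdot)$ from the last coordinate, which is what singles out $e^q$ as the correct direction to test both maximality and the boundary property simultaneously. The one bookkeeping point to confirm is that the constraint $g(x)\leq 0$ in \eqref{P1} is to be read as $g(x)\leq_D 0$, so that its feasible set indeed coincides with $\mathcal{X}$ and the identification of its optimal value with $\inf_{x\in\mathcal{X}}w(t)^T\Gamma(x)$ is legitimate.
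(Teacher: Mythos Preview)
Your argument is correct and follows exactly the same line as the paper's own proof: the key observation in both is that $w(\cdot)$ does not depend on the last coordinate, which immediately yields $K$-maximality, and the remaining claims (the explicit form of $D^*(t)$ and membership in $\bd\mathcal{D}$) are treated as straightforward consequences of the definitions. The paper's proof is simply a two-line sketch of what you have written out in full.
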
 
\begin{proof} 
Since $t\in\mathcal{T}$, we have $w(t)\in C^+\setminus\{0\}$. We obtain $D^*(t)\in \R^q$ and $D^*(t)$ is $K$-maximal in $\mathcal{D}$ since $w(t)$ does not depend on $t_q$. The remaining statements are obvious.
\end{proof}

\begin{remark}
Another possibility is to define the dual objective function as
$$ D^*(u,t) := (t_1,\ldots, t_{q-1},\inf_{x\in X} L_{w(t)}(x,u))^T.$$
By this variant the special structure of the feasible set is taken into account. As this definition leads to the same lower image $\mathcal{D}$, very similar results can be obtained.
\end{remark}

The lower image $\mathcal{D}$ is a closed convex set. To show convexity, let $t^1,t^2 \in \mathcal{D}$, $\alpha \in [0,1]$, and set $t:=\alpha t^1 + (1-\alpha) t^2$. Then, $w(t) = \alpha w(t^1)+(1-\alpha)w(t^2) \in C^+$, as $C^+$ is convex. It holds $t \in \mathcal{D}$ as we have
\begin{align*}
t_q = \alpha t^1_q + (1-\alpha)t^2_q &\leq \alpha\inf_{x\in\mathcal{X}}[w(t^1)^T\Gamma(x)]+ (1-\alpha)\inf_{x\in\mathcal{X}}[w(t^2)^T\Gamma(x)]\\
&\leq \inf_{x\in\mathcal{X}}[w(t)^T\Gamma(x)].
\end{align*}
Taking into account \cite[Proposition 5.5 and Remark 2]{geometric} we see that $\mathcal{D}$ is closed (since it can be expressed as $\mathcal{D}= -\epi f^*$ for some function $f$, cf. \cite{geometric}).

In \cite{geometric} it was proven that there is a one-to-one correspondence between the set of $K$-maximal exposed faces of $\mathcal{D}$, and the set of all weakly $C$-minimal exposed faces of $\mathcal{P}$. Recall that the $K$-maximal elements of $\mathcal{D}$ are defined as elements of the set $\MaxK(\mathcal{D}) : = \{ t \in\mathcal{D}: (\{t\}+K\setminus \{0\}) \cap\mathcal{D} = \emptyset\}$.
For $y, y^* \in \R^q$ we define:
\begin{align*}
\varphi: \R^q\times\R^q \rightarrow \R, \:\:\: & \varphi(y,y^*):= (y^*_1,\ldots, y^*_{q-1},1)T^{-1}y-y^*_q,\\
H:\R^q \rightrightarrows \R^q, \:\:\: & H(y^*) := \{y\in\R^q :\varphi(y,y^*)=0 \},\\
H^*:\R^q \rightrightarrows \R^q, \:\:\: & H^*(y) := \{y^*\in \R^q : \varphi(y,y^*)=0 \}.
\end{align*}
The duality map $\Psi: 2^{\R^q}\rightarrow 2^{\R^q}$ is constructed as
\begin{align*}
\Psi(F^*):= \bigcap_{y^*\in F^*}H(y^*)\cap\mathcal{P}.
\end{align*}
The following \emph{geometric duality theorem} states that $\Psi$ is a duality map between $\mathcal{P}$ and $\mathcal{D}$.

\begin{theorem}[{\cite[Theorem 5.6]{geometric}}]
$\Psi$ is an inclusion reversing one-to-one mapping between the set of all $K$-maximal exposed faces of $\mathcal{D}$ and the set of all weakly $C$-minimal exposed faces of $\mathcal{P}$. The inverse map is given by
\begin{align*}
\Psi^{-1}(F) = \bigcap_{y \in F}H^*(y)\cap\mathcal{D}.
\end{align*}
\end{theorem}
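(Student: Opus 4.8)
The plan is to reduce the statement to the classical biduality between a closed convex function and its Legendre--Fenchel conjugate, exploiting the fact (noted above) that $\mathcal{D}=-\epi f^*$ for a suitable closed convex $f$ whose epigraph encodes $\mathcal{P}$ in the coordinates given by $T$. The coupling $\varphi$ is the bridge: writing $w(y^*)^T=(y^*_1,\ldots,y^*_{q-1},1)T^{-1}$ one sees that $\varphi(y,y^*)=w(y^*)^Ty-y^*_q$, so that $H(y^*)$ is the hyperplane with normal $w(y^*)$ and level $y^*_q$, while $H^*(y)$ is its counterpart in the dual variables.

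First I would prove a \emph{weak duality} inequality: for every $y\in\mathcal{P}$ and $y^*\in\mathcal{D}$ we have $\varphi(y,y^*)\ge 0$. This is immediate from the description of $\mathcal{D}$, since $w(y^*)\in C^+$ forces $\inf_{y\in\mathcal{P}}w(y^*)^Ty=\inf_{x\in\mathcal{X}}w(y^*)^T\Gamma(x)\ge y^*_q$. Consequently, for fixed $y^*\in\MaxK(\mathcal{D})$ the hyperplane $H(y^*)$ supports $\mathcal{P}$ from below with inner normal $w(y^*)\in C^+$; its contact set $H(y^*)\cap\mathcal{P}$ is therefore an exposed face, and because the normal lies in $C^+$ this face is weakly $C$-minimal. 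Symmetrically, $K$-maximality of $y^*$ is precisely the condition that the infimum defining $y^*_q$ is tight, i.e. that $H(y^*)$ genuinely touches $\mathcal{P}$.

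With this in hand I would show that $\Psi$ is well defined and inclusion reversing. For a $K$-maximal exposed face $F^*$ of $\mathcal{D}$, the set $\Psi(F^*)=\bigcap_{y^*\in F^*}H(y^*)\cap\mathcal{P}$ is an intersection of exposed faces of $\mathcal{P}$, hence (by the standard fact that such intersections are exposed) again an exposed face, and it is weakly $C$-minimal by the previous step; inclusion reversal is immediate, since intersecting over a larger index set can only shrink the result. The corresponding statements for $\Psi^{-1}$ follow from the same argument with the roles of $\mathcal{P}$ and $\mathcal{D}$, and of $H$ and $H^*$, interchanged.

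The decisive and hardest step is bijectivity, i.e. $\Psi^{-1}\circ\Psi=\mathrm{id}$ and $\Psi\circ\Psi^{-1}=\mathrm{id}$ on the respective families of exposed faces. Here I would invoke a bipolar-type theorem for the pairing $\varphi$: closedness and convexity of $\mathcal{P}$ and $\mathcal{D}$, together with the conjugacy identity $\mathcal{D}=-\epi f^*$ and the biduality $f^{**}=f$, imply that each set is recovered as the collection of points making $\varphi$ nonnegative against the other, and that the equality $\varphi(y,y^*)=0$ establishes a perfect incidence between $K$-maximal boundary points of $\mathcal{D}$ and weakly $C$-minimal boundary points of $\mathcal{P}$. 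Chasing this incidence through the definitions of $\Psi$ and $\Psi^{-1}$ yields the two identities. The main obstacle is exactly this bipolar correspondence: one must verify that no exposed face is lost or merged, which requires careful control of attainment in the defining infima (so that supporting hyperplanes correspond to genuine, nonempty exposed faces rather than merely asymptotic ones) and of the normalization $c^Tw=1$ tying $t$ to $w(t)$ via \eqref{eq_wt}.
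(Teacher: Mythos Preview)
The paper does not prove this theorem; it is quoted verbatim from Heyde \cite[Theorem 5.6]{geometric} and stated without proof, so there is no argument in the present paper to compare your proposal against. Your sketch is a reasonable outline of the conjugacy-based route that Heyde himself takes, and the weak-duality part you describe is essentially the content of Proposition~\ref{strongdual} here.

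One technical caution about your sketch, independent of the comparison: the claim that an intersection of exposed faces is again an exposed face is not true for general closed convex sets (it yields a face, but exposedness can fail). In Heyde's proof this issue is handled by working through the epigraphical representation $\mathcal{D}=-\epi f^*$ and the subdifferential correspondence rather than by intersecting exposed faces directly; if you flesh out your argument you will need a similar device to guarantee that $\Psi(F^*)$ is genuinely exposed and nonempty, not merely a face.
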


Similar to Definition~\ref{soln} and following the pattern of the linear case in \cite{lvop,lohne}, we introduce a solution concept for the dual problem \eqref{(D)}. 

\begin{definition} \label{dualsoln}
A point $\bar{t} \in \mathcal{T}$ is a maximizer for \eqref{(D)}, if there is no $t \in \mathcal{T}$ with $D^*(t)\geq_K D^*(\bar{t})$ and $D^*(t) \neq D^*(\bar{t})$, that is, $D^*(\bar{t})$ is a $K$-maximal element of $D^*(\mathcal{T})$. 
A nonempty set $\bar{\mathcal{T}} \subseteq\mathcal{T}$ is called a \emph{supremizer of \eqref{(D)}} if 
$\cl \conv (D^*(\bar{\mathcal{T}})-K) = \mathcal{D}$.
A supremizer $\bar{\mathcal{T}}$ of \eqref{(D)} is called a \emph{solution} to \eqref{(D)} if it consists of only maximizers.
\end{definition}

As for the primal problem, we also consider an $\epsilon$-solution of \eqref{(D)} consisting of only finitely many maximizers. This concept is an extension of $\epsilon$-solutions for LVOPs introduced in \cite[Remark 4.10]{lvop} to the convex setting.

\begin{definition}\label{weakdualsoln}
For the geometric dual problem \eqref{(D)}, a nonempty finite set $\bar{\mathcal{T}}\subseteq \mathcal{T}$ is called a \emph{finite $\epsilon$-supremizer of \eqref{(D)}} if 
\begin{equation}\label{eqweakdualsoln}
\conv D^*(\bar{\mathcal{T}})-K+\epsilon \{e^q\} \supseteq \mathcal{D}.
\end{equation}
 A finite $\epsilon$-supremizer $\bar{\mathcal{T}}$ of \eqref{(D)} is called a \emph{finite $\epsilon$-solution} to \eqref{(D)} if it consists of only maximizers.
\end{definition}

Note that if $\bar{\mathcal{T}}$ is a finite $\epsilon$-solution of \eqref{(D)}, one obtains the following inner and outer polyhedral approximation of the lower image
\begin{align*}
\conv D^*(\bar{\mathcal{T}})-K+\epsilon\{e^q\} \supseteq \mathcal{D} \supseteq \conv D^*(\bar{\mathcal{T}})-K.
\end{align*}

We next show that an approximation of the upper image of \eqref{(P)} can be used to obtain an approximation of the lower image of \eqref{(D)} and vise versa. We have the following duality relations, which will be used to prove the correctness of the algorithms, see  Theorems~\ref{thmprimal} and~\ref{thmdual} below.

\begin{proposition}\label{apprprop1}
Let $\emptyset \neq \bar{\mathcal{P}} \subsetneq \R^q$ be a closed and convex set such that $\bar{\mathcal{P}} = \bar{\mathcal{P}} + C$ and let $\bar{\mathcal{D}}$ be defined by
\begin{align}\label{Di}
\bar{\mathcal{D}} = \{y^*\in \R^q: \forall y \in \bar{\mathcal{P}}, \varphi(y,y^*) \geq 0\}.
\end{align}
Then,
\begin{align}\label{Po}
\bar{\mathcal{P}} = \{y\in \R^q: \forall y^* \in \bar{\mathcal{D}}, \varphi(y,y^*) \geq 0\}.
\end{align}
\end{proposition}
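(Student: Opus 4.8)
The plan is to prove the two inclusions between $\bar{\mathcal{P}}$ and the set $Q := \{y \in \R^q : \forall\, y^* \in \bar{\mathcal{D}},\ \varphi(y,y^*) \geq 0\}$ on the right-hand side of \eqref{Po} separately. The inclusion $\bar{\mathcal{P}} \subseteq Q$ is immediate from the definitions: if $y \in \bar{\mathcal{P}}$ and $y^* \in \bar{\mathcal{D}}$, then $\varphi(y,y^*) \geq 0$ holds by the very definition \eqref{Di} of $\bar{\mathcal{D}}$, so $y \in Q$. The whole content of the proposition therefore lies in the reverse inclusion $Q \subseteq \bar{\mathcal{P}}$, which I would establish by contraposition: given a point $y_0 \notin \bar{\mathcal{P}}$, I must exhibit a single $y^* \in \bar{\mathcal{D}}$ with $\varphi(y_0,y^*) < 0$, thereby certifying $y_0 \notin Q$.

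To set this up, note first that $\varphi(\cdot,y^*)$ is affine in $y$ with gradient $\ell(y^*)^T$, where $\ell(y^*) := (y^*_1,\ldots,y^*_{q-1},1)T^{-1}$; since the last entry of the row $(y^*_1,\ldots,y^*_{q-1},1)$ equals $1$ and $T$ is nonsingular, one has $\ell(y^*) \neq 0$ for every $y^*$, so $\{y : \varphi(y,y^*)\geq 0\}$ is a genuine closed half-space. Because $\bar{\mathcal{P}}$ is closed, convex, nonempty, and $y_0 \notin \bar{\mathcal{P}}$, the separation theorem yields $a \in \R^q\setminus\{0\}$ and $b \in \R$ with $a^Ty \geq b > a^Ty_0$ for all $y \in \bar{\mathcal{P}}$. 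The remaining task, and the main obstacle, is to realize a positive multiple of this separating functional in the restricted form $\ell(y^*)$, whose last-coordinate normalization forces a constraint on the admissible normals.

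This is exactly where the hypothesis $\bar{\mathcal{P}} = \bar{\mathcal{P}} + C$ enters. For fixed $y \in \bar{\mathcal{P}}$ and any $k \in C$ one has $y + tk \in \bar{\mathcal{P}}$ for all $t > 0$, so $a^Ty + t\,a^Tk \geq b$; letting $t \to \infty$ forces $a^Tk \geq 0$, whence $a \in C^+$. Since $c \in \Int C$ and $a \neq 0$, a standard argument (perturbing $c$ inside $C$ along $-a$) upgrades this to the strict inequality $a^Tc > 0$. Setting $\lambda := a^Tc > 0$ and recalling that the last column of $T = (c^1,\ldots,c^{q-1},c)$ is $c$, I would take the row $(y^*_1,\ldots,y^*_{q-1},1)$ to equal $\tfrac{1}{\lambda}a^TT$; its last entry is $\tfrac{1}{\lambda}a^Tc = 1$, which is consistent, and this defines $y^*_1,\ldots,y^*_{q-1}$ so that $\ell(y^*) = \tfrac{1}{\lambda}a^TT\,T^{-1} = \tfrac{1}{\lambda}a^T$.

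It then remains only to choose the last coordinate. Put $\beta := \inf_{y \in \bar{\mathcal{P}}} a^Ty$, which is finite since $b \leq \beta < +\infty$ (the set being nonempty), and set $y^*_q := \beta/\lambda$. For every $y \in \bar{\mathcal{P}}$ we obtain $\varphi(y,y^*) = \tfrac{1}{\lambda}a^Ty - \beta/\lambda \geq 0$, so $y^* \in \bar{\mathcal{D}}$, while $\varphi(y_0,y^*) = \tfrac{1}{\lambda}(a^Ty_0 - \beta) < 0$ because $a^Ty_0 < b \leq \beta$. This produces the required $y^* \in \bar{\mathcal{D}}$ with $\varphi(y_0,y^*) < 0$, completing the contrapositive and hence the proof. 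The remaining hypotheses play only a bookkeeping role: $\bar{\mathcal{P}} \neq \emptyset$ guarantees $\beta < \infty$, and $\bar{\mathcal{P}} \subsetneq \R^q$ guarantees that points $y_0$ to be separated actually exist.
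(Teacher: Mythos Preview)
Your proof is correct and follows essentially the same route as the paper's: both arguments separate a point $y_0\notin\bar{\mathcal{P}}$ from $\bar{\mathcal{P}}$, use $\bar{\mathcal{P}}+C=\bar{\mathcal{P}}$ to force the separating normal into $C^+$, normalize via $c\in\Int C$ so that the normal has the form $w(y^*)$, and set $y^*_q$ equal to the (normalized) infimum to obtain $y^*\in\bar{\mathcal{D}}$ with $\varphi(y_0,y^*)<0$. Your write-up is more explicit about the normalization step (dividing by $\lambda=a^Tc$ rather than assuming $\eta^Tc=1$ without loss of generality) and about why $a^Tc>0$, but the argument is the same.
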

\begin{proof}
The inclusion $\subseteq$ is obvious. Assume that the inclusion $\supseteq$ does not hold. Then there exists $\bar y \in \R^q\setminus \bar{\mathcal{P}}$ such that $\varphi(\bar y,y^*)\geq 0$ for all $y^* \in \bar{\mathcal{D}}$. By usual separation arguments, we get $\eta \in C^+\smz$ with $\eta ^T \bar y < \inf_{y \in \bar{\mathcal{P}}} \eta^T y =: \gamma$. Since $c \in \Int C$ we can assume $\eta^T c = 1$. Setting $\bar y^*:=(\eta_1,\dots,\eta_{q-1},\gamma) T$, we get $\varphi(y,\bar y^*)=\eta^T y-\gamma$. For all $y \in \bar{\mathcal{P}}$, we have $\eta^T y-\gamma \geq 0$, i.e., $\bar y^* \in \bar{\mathcal{D}}$. But $\varphi(\bar y,\bar y^*)=\eta^T \bar y-\gamma< 0$, a contradiction.
\end{proof}

\begin{proposition}\label{apprprop2}
Let $\emptyset \neq \bar{\mathcal{D}} \subsetneq \R^q$ be a closed and convex set such that $\bar{\mathcal{D}} = \bar{\mathcal{D}} - K$, $\bar{\mathcal{D}} \neq \bar{\mathcal{D}} + K$ and let $\bar{\mathcal{P}}$ be defined by (\ref{Po}). Then (\ref{Di}) holds.
\end{proposition}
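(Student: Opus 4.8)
The plan is to mirror the proof of Proposition~\ref{apprprop1}, but to separate in the dual space $\R^q$ and to exploit the two structural hypotheses on $\bar{\mathcal{D}}$ in place of the relation $c \in \Int C$. Write $\bar{\mathcal{D}}' := \{y^* \in \R^q : \forall y \in \bar{\mathcal{P}},\ \varphi(y,y^*) \geq 0\}$ for the set appearing on the right-hand side of \eqref{Di}, so that the goal is $\bar{\mathcal{D}} = \bar{\mathcal{D}}'$. The inclusion $\bar{\mathcal{D}} \subseteq \bar{\mathcal{D}}'$ is immediate: if $y^* \in \bar{\mathcal{D}}$ and $y \in \bar{\mathcal{P}}$, then $\varphi(y,y^*) \geq 0$ by the very definition \eqref{Po} of $\bar{\mathcal{P}}$.

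For the reverse inclusion I argue by contradiction. Suppose $\bar y^* \in \bar{\mathcal{D}}' \setminus \bar{\mathcal{D}}$. Since $\bar{\mathcal{D}}$ is closed and convex, standard separation yields $a \in \R^q \smz$ and $\beta \in \R$ with $a^T \bar y^* < \beta \leq a^T y^*$ for all $y^* \in \bar{\mathcal{D}}$. Because $\bar{\mathcal{D}} = \bar{\mathcal{D}} - K$ with $K = \R_+ e^q$, letting $y^* - \lambda e^q$ range over $\bar{\mathcal{D}}$ and sending $\lambda \to \infty$ forces the last coordinate to satisfy $a_q \leq 0$.

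The crux is to upgrade this to $a_q < 0$, and this is exactly where the hypothesis $\bar{\mathcal{D}} \neq \bar{\mathcal{D}} + K$ enters, playing the role that $c \in \Int C$ played in Proposition~\ref{apprprop1}. Since $\bar{\mathcal{D}} + K \neq \bar{\mathcal{D}}$, the direction $e^q$ is not a recession direction of $\bar{\mathcal{D}}$, so (by the barrier-cone characterization of the recession cone) there is a vector $a^0$ with $a^0_q < 0$ along which $\bar{\mathcal{D}}$ is bounded below, i.e. $\gamma_0 := \inf_{y^* \in \bar{\mathcal{D}}} (a^0)^T y^* > -\infty$. Replacing $a$ by $a + \theta a^0$ for a sufficiently small $\theta > 0$ keeps the last coordinate strictly negative while preserving the strict separation: superadditivity of the infimum gives $\inf_{\bar{\mathcal{D}}}(a+\theta a^0)^T y^* \geq \beta + \theta\gamma_0$, and the affine (hence continuous) dependence of $(a+\theta a^0)^T \bar y^*$ on $\theta$ keeps the value at $\bar y^*$ strictly below this bound for small $\theta$. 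Thus we may assume $a_q < 0$.

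Finally I convert the separating functional into a point of $\bar{\mathcal{P}}$. As a function of $y^*$, $\varphi(y,\cdot)$ is affine with the coefficient of $y^*_q$ always equal to $-1$, since $\varphi(y,y^*) = \sum_{i=1}^{q-1}(T^{-1}y)_i\, y^*_i - y^*_q + (T^{-1}y)_q$. Normalizing the separator to $\ell(y^*) := (a^T y^* - \beta)/(-a_q)$ produces an affine functional whose $y^*_q$-coefficient is $-1$, which is nonnegative on $\bar{\mathcal{D}}$ and satisfies $\ell(\bar y^*) < 0$. Reading off its coefficients of $y^*_1,\dots,y^*_{q-1}$ and its constant term and using that $T$ is nonsingular, one solves $T^{-1}\bar y = (\text{those coefficients},\text{constant})^T$ for a unique $\bar y \in \R^q$ with $\varphi(\bar y,\cdot) = \ell$. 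Then $\varphi(\bar y,y^*) \geq 0$ for all $y^* \in \bar{\mathcal{D}}$ gives $\bar y \in \bar{\mathcal{P}}$ by \eqref{Po}, whereas $\varphi(\bar y,\bar y^*) = \ell(\bar y^*) < 0$ contradicts $\bar y^* \in \bar{\mathcal{D}}'$. I expect the main obstacle to be precisely the $a_q = 0$ case handled in the third paragraph: ruling it out is the only place where the assumption $\bar{\mathcal{D}} \neq \bar{\mathcal{D}} + K$ is genuinely required, and it is the faithful dual analogue of the normalization $\eta^T c = 1$ used in Proposition~\ref{apprprop1}.
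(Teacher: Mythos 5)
Your proof is correct, and its skeleton is the same as the paper's: assume the inclusion $\supseteq$ in \eqref{Di} fails, separate the offending point $\bar y^*$ from $\bar{\mathcal{D}}$, arrange for the separating functional to be non-vertical (nonzero last coefficient), normalize it so that its $y^*_q$-coefficient equals $-1$, and use the nonsingularity of $T$ to realize it as $\varphi(\bar y,\cdot)$ for some $\bar y$, which then lies in $\bar{\mathcal{P}}$ and yields the contradiction. The genuine difference lies in the step you yourself identify as the crux: excluding a vertical separator. The paper perturbs its separator by $e^q$ itself, justified by the claim that $\bar{\mathcal{D}}\neq\bar{\mathcal{D}}+K$ forces $\gamma(e^q)=\sup_{y^*\in\bar{\mathcal{D}}}y^*_q<\infty$; you instead use the polarity between the recession cone and the barrier cone of a nonempty closed convex set (\cite[Corollary~14.2.1]{rockafellar}) to produce some $a^0$ with $a^0_q<0$ that is bounded below on $\bar{\mathcal{D}}$, and perturb by that. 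Your variant is in fact the more robust one: the implication ``$\bar{\mathcal{D}}\neq\bar{\mathcal{D}}+K \Rightarrow \gamma(e^q)<\infty$'' is not valid for arbitrary sets satisfying the hypotheses --- for $q=2$ the set $\bar{\mathcal{D}}=\{t\in\R^2: t_1>0,\ t_2\leq\log t_1\}$ is closed, convex, and satisfies $\bar{\mathcal{D}}=\bar{\mathcal{D}}-K\neq\bar{\mathcal{D}}+K$, yet $\sup_{t\in\bar{\mathcal{D}}}t_2=+\infty$. (Boundedness of a linear functional on a closed convex set is equivalent to the recession-cone condition only in the polyhedral case; that is exactly the situation when the proposition is invoked in the proof of Theorem~\ref{thmprimal}, so the paper's argument suffices for its application, but the proposition is stated for general closed convex $\bar{\mathcal{D}}$, and in that generality your barrier-cone perturbation is the argument that works.) The remaining details of your proof --- the superadditivity estimate showing the perturbed functional still separates for small $\theta>0$, the normalization $\ell(y^*)=(a^Ty^*-\beta)/(-a_q)$, and recovering $\bar y$ from the coefficients of $\ell$ --- all check out; just state explicitly that after replacing $a$ by $a+\theta a^0$ the bound $\beta$ must be replaced by $\beta+\theta\gamma_0$ in the definition of $\ell$, which is what your ``we may assume $a_q<0$'' implicitly does.
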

\begin{proof}
The inclusion $\subseteq$ is obvious. Assume the inclusion $\supseteq$ does not hold. Then, there exists $\bar{y}^* \in \R^q\setminus \bar{\mathcal{D}}$ with $\varphi(y,\bar{y}^*)\geq 0$ for all $y \in \bar{\mathcal{P}}$. Applying a separation argument, we obtain $\eta \in K^+\setminus\{0\}$ with $\eta^T\bar{y}^* > \sup_{y^* \in \bar{\mathcal{D}}} \eta^Ty^* =: \gamma(\eta)$. Using the assumption $\bar{\mathcal{D}} \neq \bar{\mathcal{D}} + K$, we get $\gamma(e^q)<\infty$. Set $\alpha(\eta):=\eta^T\bar y^* - \gamma(\eta)>0$ and let $\beta > 0$ such that 
$\beta(\gamma(e^q)-\bar y^*_q) < \alpha(\eta)$. For $\bar \eta:= \eta + \beta e^q \in \Int K^+$ we have
$$ \bar\eta^T \bar y^* = \eta^T\bar y^* + \beta \bar y^*_q  > \eta^T\bar y^* - \alpha(\eta) + \beta \gamma(e^q)
=\gamma(\eta) + \beta \gamma(e^q) \geq \gamma(\bar \eta). $$ 
Without loss of generality we can assume that $\bar \eta_q=1$. Setting $\bar{y} = T(-\bar \eta_1, \ldots, -\bar \eta_{q-1},\gamma(\bar \eta))^T$, we have $\varphi(\bar{y},y^*) = \gamma(\bar\eta)-\bar\eta^Ty^* \geq 0$ for all $y^*\in \bar{\mathcal{D}}$ which implies $\bar{y}\in \bar{\mathcal{P}}$. But, $\varphi(\bar{y},\bar{y}^*) = \gamma(\bar \eta)-\bar\eta^T\bar{y}^* < 0$, a contradiction.
\end{proof}
Both duality relations \eqref{Di} and \eqref{Po} hold for the upper and lower images of \eqref{(P)} and \eqref{(D)}.

\begin{proposition}\label{strongdual} Equations \eqref{Di} and \eqref{Po} are satisfied for $\bar{\mathcal{P}}=\mathcal{P}$ and $\bar{\mathcal{D}}=\mathcal{D}$.
\end{proposition}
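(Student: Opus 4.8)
The plan is to verify the two duality relations \eqref{Di} and \eqref{Po} for the specific pair $\bar{\mathcal{P}}=\mathcal{P}$ and $\bar{\mathcal{D}}=\mathcal{D}$ by checking that these concrete sets satisfy the structural hypotheses of Propositions~\ref{apprprop1} and~\ref{apprprop2}, and then quoting those results. The strategy splits naturally into two halves: first establish that $(\mathcal{P},\mathcal{D})$ satisfies \eqref{Di}, i.e. that $\mathcal{D}$ is exactly the set $\{y^*:\forall y\in\mathcal{P},\,\varphi(y,y^*)\geq 0\}$; once this is known, the companion relation \eqref{Po} follows from Proposition~\ref{apprprop1} applied to $\bar{\mathcal{P}}=\mathcal{P}$ (whose hypotheses $\emptyset\neq\mathcal{P}\subsetneq\R^q$, closed, convex, and $\mathcal{P}=\mathcal{P}+C$ are immediate from the definition \eqref{upim1} and boundedness is not even needed here), which directly outputs \eqref{Po}.

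So the heart of the matter is the first half: proving that $\mathcal{D}$ agrees with the set defined by \eqref{Di} when $\bar{\mathcal{P}}=\mathcal{P}$. First I would unwind the definition of $\varphi$. For $y^*$ with $y^*_q=1$-type normalization, writing $\eta:=(\text{the vector encoded by }y^*\text{ via }T^{-1})$, the condition $\varphi(y,y^*)\geq 0$ for all $y\in\mathcal{P}$ reads as a supporting-hyperplane condition $\eta^Ty\geq y^*_q$, i.e. $y^*_q\leq\inf_{y\in\mathcal{P}}\eta^Ty$. Using the correspondence \eqref{eq_wt} between $t$, $w(t)$, and $T^Tw$, together with the fact that $\inf_{y\in\mathcal{P}}w^Ty=\inf_{x\in\mathcal{X}}w^T\Gamma(x)$ (because $\mathcal{P}=\cl(\Gamma(\mathcal{X})+C)$ and $w\in C^+$, so adding $C$ and taking closure does not lower the infimum of a continuous linear functional that is bounded below on $C$), this should match the explicit description $\mathcal{D}=\{t:w(t)\in C^+,\ t_q\leq\inf_{x\in\mathcal{X}}w(t)^T\Gamma(x)\}$ given just above Proposition~\ref{dualprop0}. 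The condition $w(t)\in C^+$ corresponds precisely to $\eta\in C^+$, which is the regime in which the separating functionals from the proof of Proposition~\ref{apprprop1} live.

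The remaining step is to invoke Proposition~\ref{apprprop2} to obtain \eqref{Di} from \eqref{Po} in the reverse direction, which requires checking its hypotheses for $\bar{\mathcal{D}}=\mathcal{D}$: that $\mathcal{D}$ is closed and convex (both established in the paragraph preceding this proposition, via $\mathcal{D}=-\epi f^*$ and the direct convexity computation), that $\mathcal{D}=\mathcal{D}-K$ (clear from $\mathcal{D}=D^*(\mathcal{T})-K$), and that $\mathcal{D}\neq\mathcal{D}+K$, which is where boundedness of \eqref{(P)} enters — boundedness guarantees the infimum defining $t_q$ is finite for the relevant $w$, so $\mathcal{D}$ does not absorb the $+K$ direction. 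The main obstacle I anticipate is precisely this interchange $\inf_{y\in\mathcal{P}}w^Ty=\inf_{x\in\mathcal{X}}w^T\Gamma(x)$ and the careful bookkeeping of the normalization $c^Tw=1$ (equivalently $\eta^Tc=1$), since the definition of $\varphi$ hardcodes the last coordinate; everything else is a matter of translating between the $t$-coordinates and the supporting-functional $\eta$-coordinates and then citing Propositions~\ref{apprprop1} and~\ref{apprprop2}.
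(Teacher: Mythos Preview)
Your core strategy --- verify \eqref{Di} directly for the pair $(\mathcal{P},\mathcal{D})$ by unwinding the definition of $\varphi$ and the explicit description of $\mathcal{D}$, then invoke Proposition~\ref{apprprop1} to obtain \eqref{Po} --- is correct and is exactly what the paper does. The paper's verification of \eqref{Di} is slightly crisper than your sketch: from $\varphi(y,y^*)=w(y^*)^Ty-y^*_q\geq 0$ for all $y\in\mathcal{P}$ one gets both $D^*_q(y^*)\geq y^*_q$ (by restricting to $y\in\Gamma(\mathcal{X})$) and $w(y^*)\in C^+$ (using $\mathcal{P}+C=\mathcal{P}$, so that if $w(y^*)^Tc<0$ for some $c\in C$ one could drive $\varphi$ to $-\infty$). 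Your second paragraph gestures at both ingredients but doesn't quite isolate the second one.

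Your third paragraph, however, should be dropped. Once \eqref{Di} is established directly and \eqref{Po} follows from Proposition~\ref{apprprop1}, there is nothing ``remaining''; invoking Proposition~\ref{apprprop2} to recover \eqref{Di} from \eqref{Po} is redundant, and if you intended it as an \emph{alternative} route to \eqref{Di} it would be circular (Proposition~\ref{apprprop2} presupposes \eqref{Po}, which in turn was obtained from \eqref{Di}). Relatedly, boundedness of \eqref{(P)} plays no role here --- the paper proves Proposition~\ref{strongdual} before Assumption~\ref{assmp} is even introduced --- so the hypothesis $\mathcal{D}\neq\mathcal{D}+K$ and the associated discussion are unnecessary.
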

\begin{proof} We have $\varphi(y,y^*)\geq 0$ for $y \in \mathcal{P}$ and $y^* \in \mathcal{D}$, which implies  $\subseteq$ in \eqref{Di}. Let $y^* \in \R^q$ such that $\varphi(y,y^*)=w(y^*)^T y-y^*_q\geq 0$ for all $y \in \mathcal{P}$. Then, $D^*_q(y^*) \geq y^*_q$. Using $\mathcal{P} + C = \mathcal{P}$, we get $w(y^*)\in C^+$. Hence $y^*\in \mathcal{D}$, i.e., \eqref{Di} holds. Proposition \ref{apprprop1} yields \eqref{Po}. 
\end{proof}

\section{Algorithms for CVOPs}\label{secalgo}

Let us consider the convex vector optimization problem \eqref{(P)} with polyhedral ordering cones $C$ and $D$. Then, we can assume without loss of generality that $D = \R^m_+ = \{y \in \R^m: y_1 \geq 0,\ldots, y_m \geq 0\}$, which means that $g$ is component-wise convex. Indeed, whenever $D$ is polyhedral, the feasible set $\mathcal{X} = \{x\in X:g(x)\leq_D 0\}$ can be written as $\{x\in X: h(x)=(h_1(x),\ldots,h_l(x))\leq_{\R^l_+} 0 \}$, where $d_1,\ldots,d_l \in \R^m$ are the $l$ generating vectors of the dual cone $D^+$ of $D$, and $h: X \rightarrow \R^l$ is defined by $h_i(x):=d_i^Tg(x)$ for $i = 1,\ldots,l$. Moreover, $g$ is $D$-convex if and only if $h$ is $\R^l_+$-convex. 

In addition to the assumptions made in the problem formulation of \eqref{(P)} in the beginning of Section~\ref{subsetP}, we will assume the following throughout the rest of the paper.
\begin{assumption}\label{assmp} Let the following hold true.
\begin{enumerate} [(a)]
\item The feasible region $\mathcal{X}$ is a compact subset of $\R^{n}$.
\item $\mathcal{X}$ has non-empty interior.
\item The objective function $\Gamma: X \rightarrow \R^q$ is continuous.
\item The ordering cone $C$ is polyhedral, and $D = \R^m_+$. 
\end{enumerate}
\end{assumption}

Assumption~\ref{assmp} implies that problem \eqref{(P)} is bounded. Indeed, as $\mathcal{X}$ is compact and $\Gamma$ is continuous, $\Gamma(\mathcal{X})$ is compact, in particular, bounded. Thus, there exists $a\in\R^q$ and $r>0$ such that the open ball $B(a,r)$ around $a$ with radius $r$ contains $\Gamma(\mathcal{X})$. Furthermore, as $c \in \Int C$, there exists $\rho > 0$ such that $B(a,\rho) \subseteq a - c + C$. Now one can check that $y := a - \frac{r}{\rho}c$ satisfies $\{y\} + C \supseteq B(a,r) \supseteq \Gamma(\mathcal{X})$.

Another consequence of Assumption~\ref{assmp} is that $\Gamma(\mathcal{X})+C$ is closed, i.e. the upper image $\mathcal{P}$ as defined in \eqref{upim1} can be expressed as $\mathcal{P}= \Gamma(\mathcal{X})+C$. Assumption~\ref{assmp} guaranties the existence of solutions and finite $\epsilon$-solutions to \eqref{(P)}.

\begin{proposition}\label{exsol}
Under Assumption~\ref{assmp}, a solution to \eqref{(P)} exists.
\end{proposition}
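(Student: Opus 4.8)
The plan is to exhibit an explicit solution, namely the set of \emph{all} minimizers
\[
\bar{\mathcal{X}} := \{x \in \mathcal{X} : \Gamma(x) \in \Minc(\Gamma(\mathcal{X}))\}.
\]
By construction $\bar{\mathcal{X}}$ consists only of minimizers and $\Gamma(\bar{\mathcal{X}}) = \Minc(\Gamma(\mathcal{X}))$, so by Definition~\ref{soln} it remains to check two things: that $\bar{\mathcal{X}}$ is nonempty, and that it is an infimizer, i.e.\ $\cl\conv(\Minc(\Gamma(\mathcal{X})) + C) = \mathcal{P}$. Both will follow at once from the \emph{domination property} $\Gamma(\mathcal{X}) \subseteq \Minc(\Gamma(\mathcal{X})) + C$, which I would establish as the core of the argument. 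Recall from the discussion preceding the proposition that Assumption~\ref{assmp} gives $\mathcal{P} = \Gamma(\mathcal{X}) + C$ with $\Gamma(\mathcal{X})$ compact (continuous image of the compact set $\mathcal{X}$) and $\mathcal{P}$ closed and convex.

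To prove domination, fix an arbitrary $y_0 \in \Gamma(\mathcal{X})$ and consider the section $S := (\{y_0\} - C) \cap \Gamma(\mathcal{X})$, which is nonempty (it contains $y_0$) and compact, being the intersection of the closed set $\{y_0\} - C$ with the compact set $\Gamma(\mathcal{X})$. Since $C$ is pointed, its dual cone $C^+$ is solid; pick any $\xi \in \Int C^+$, so that $\xi^T k > 0$ for every $k \in C \smz$. Minimizing the continuous functional $y \mapsto \xi^T y$ over the compact set $S$ yields a minimizer $y^* \in S$. I claim $y^*$ is $C$-minimal in $\Gamma(\mathcal{X})$: if some $\tilde{y} \in \Gamma(\mathcal{X})$ satisfied $y^* - \tilde{y} \in C \smz$, then $\xi^T \tilde{y} < \xi^T y^*$, while $\tilde{y} \leq_C y^* \leq_C y_0$ forces $\tilde{y} \in S$, contradicting the minimality of $\xi^T y^*$ on $S$. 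Finally $y^* \in S$ means $y_0 - y^* \in C$, i.e.\ $y_0 \in y^* + C \subseteq \Minc(\Gamma(\mathcal{X})) + C$. As $y_0$ was arbitrary, domination holds; in particular $\Minc(\Gamma(\mathcal{X})) \neq \emptyset$ and hence $\bar{\mathcal{X}} \neq \emptyset$.

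It remains to read off the infimizer property. Domination together with $C + C = C$ gives
\[
\mathcal{P} = \Gamma(\mathcal{X}) + C \subseteq \Minc(\Gamma(\mathcal{X})) + C \subseteq \Gamma(\mathcal{X}) + C = \mathcal{P},
\]
so $\Gamma(\bar{\mathcal{X}}) + C = \Minc(\Gamma(\mathcal{X})) + C = \mathcal{P}$. Since $\mathcal{P}$ is closed and convex, applying $\cl\conv$ leaves it unchanged, whence $\cl\conv(\Gamma(\bar{\mathcal{X}}) + C) = \mathcal{P}$ and $\bar{\mathcal{X}}$ is an infimizer. Thus $\bar{\mathcal{X}}$ is a solution.

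I expect the only delicate point to be the domination step, and specifically the choice of a functional $\xi$ that is \emph{strictly} positive on $C \smz$ (not merely nonnegative); this is exactly where pointedness of $C$ is used, via $\Int C^+ \neq \emptyset$. One must also be careful to minimize over the truncated section $S$ rather than over all of $\Gamma(\mathcal{X})$, since a minimizer of $\xi^T y$ on $\Gamma(\mathcal{X})$ need not dominate the prescribed point $y_0$; intersecting with $\{y_0\} - C$ is what guarantees $y^* \leq_C y_0$ while preserving compactness.
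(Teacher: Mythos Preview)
Your argument is correct. The domination step is standard but you carry it out cleanly: compactness of $\Gamma(\mathcal{X})$ gives a minimizer of $\xi^T(\cdot)$ on the section $S$, pointedness of $C$ (hence solidity of $C^+$) supplies a strictly positive $\xi$, and the rest follows.

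The paper takes a genuinely different route. Rather than proving anything directly, it invokes a \emph{vectorial Weierstrass theorem} from \cite{freshlook,lohne} and then threads through a chain of solution concepts from \cite{lohne} (``solution'' $\Rightarrow$ ``mild solution'' $\Rightarrow$ ``mild convexity solution'', the last coinciding with Definition~\ref{soln}). Your approach is more elementary and fully self-contained: a reader needs no access to \cite{lohne} to verify existence, and you make transparent exactly where each hypothesis (compactness of $\mathcal{X}$, continuity of $\Gamma$, pointedness of $C$) enters. The paper's approach, by contrast, situates the result within an existing lattice-theoretic framework, which is economical if one already has that machinery at hand and makes clear that the proposition is a special case of a general principle. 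In effect you have reproved, in this concrete setting, the content of the cited Weierstrass theorem plus the translation between solution notions.
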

\begin{proof}
This is a consequence of the vectorial Weierstrass Theorem (\cite[Theorem 6.2]{freshlook} or \cite[Theorem 2.40]{lohne}).
A solution in the sense of Definition \ref{soln} corresponds to a `mild convexity solution' in \cite[Definition 2.48]{lohne}, see \cite[Propositions 1.58, 1.59]{lohne}. The vectorial Weierstrass Theorem \cite[Theorem 2.40]{lohne} implies the existence of a `solution' in the sense of \cite[Definition 2.20]{lohne}, obviously being a `mild solution' in the sense of \cite[Definition 2.41]{lohne} and being a `mild convexity solution' by \cite[Corollary 2.51]{lohne}.
\end{proof}

\begin{proposition}\label{existence_soln}
Under Assumptions~\ref{assmp}, for any $\epsilon > 0$, there exists a finite $\epsilon$-solution to problem \eqref{(P)}.
\end{proposition}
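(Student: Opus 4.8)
The plan is to establish the existence of a finite $\epsilon$-solution by first invoking Proposition~\ref{exsol} to obtain a (genuine, possibly infinite) solution $\mathcal{\bar{X}}_0$ of \eqref{(P)}, and then extracting from it a finite subset that yields the required $\epsilon$-approximation. By Definition~\ref{soln}, $\mathcal{\bar{X}}_0$ consists of weak minimizers and satisfies $\cl\conv(\Gamma(\mathcal{\bar{X}}_0)+C)=\mathcal{P}$. Since Assumption~\ref{assmp} makes $\mathcal{X}$ compact and $\Gamma$ continuous, the set $\Gamma(\mathcal{X})$ is compact, so $\mathcal{P}=\Gamma(\mathcal{X})+C$ (no closure needed) and $\mathcal{P}\subseteq\{a\}+C$ with $\Gamma(\mathcal{X})\subseteq B(a,r)$, exactly the boundedness facts derived in the text following Assumption~\ref{assmp}. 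The goal is thus to cover the boundary of the bounded image $\Gamma(\mathcal{X})$ by finitely many points whose translates $+C-\epsilon\{c\}$ already engulf $\mathcal{P}$.

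The key steps I would carry out are as follows. First, I would fix $\epsilon>0$ and consider the open cover of the compact set $\Gamma(\mathcal{X})$ by the translated open sets $U_x:=\Int\ofg{\Gamma(x)+C-\epsilon\{c\}}$ for $x\in\mathcal{\bar{X}}_0$; the crucial point is that because $c\in\Int C$, the shifted cone $C-\epsilon\{c\}$ contains a neighborhood of $0$, so each $\Gamma(x)$ lies in the interior of its own translate and these interiors genuinely cover $\Gamma(\mathcal{\bar{X}}_0)$, hence (using $\cl\conv(\Gamma(\mathcal{\bar{X}}_0)+C)=\mathcal{P}$) cover a neighborhood of the relevant part of the boundary of $\mathcal{P}$. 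By compactness I would extract a finite subcover indexed by a finite set $\mathcal{\bar{X}}:=\{x^1,\dots,x^N\}\subseteq\mathcal{\bar{X}}_0$. Second, I would verify the inclusion \eqref{eqweaksoln}, namely $\conv\Gamma(\mathcal{\bar{X}})+C-\epsilon\{c\}\supseteq\mathcal{P}$: take any $y\in\mathcal{P}=\Gamma(\mathcal{X})+C$, write $y=\Gamma(x)+k$ with $k\in C$, use the finite subcover to locate $x^i$ with $\Gamma(x)\in\Gamma(x^i)+C-\epsilon\{c\}$, and conclude $y\in\Gamma(x^i)+C-\epsilon\{c\}\subseteq\conv\Gamma(\mathcal{\bar{X}})+C-\epsilon\{c\}$ since $C+C\subseteq C$. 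Third, since $\mathcal{\bar{X}}\subseteq\mathcal{\bar{X}}_0$ consists of weak minimizers, Definition~\ref{weaksoln} is satisfied and $\mathcal{\bar{X}}$ is a finite (weak) $\epsilon$-solution.

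The step I expect to be the main obstacle is making the compactness/covering argument fully rigorous at the level of the \emph{upper image} rather than the bounded set $\Gamma(\mathcal{X})$. The translates $\Gamma(x)+C-\epsilon\{c\}$ are unbounded (they contain the whole cone direction), so they do not form an open cover of the noncompact set $\mathcal{P}$ directly; one must instead cover the compact generator $\Gamma(\mathcal{X})$ (or, more carefully, its weakly $C$-minimal boundary $\wMinc(\Gamma(\mathcal{X}))$, which need not itself be compact and may require working with its closure inside the compact $\Gamma(\mathcal{X})$) and then transport the covering to $\mathcal{P}$ via the identity $\mathcal{P}=\Gamma(\mathcal{X})+C$. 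Care is needed to ensure that every $y\in\mathcal{P}$ admits a representation $y=\Gamma(x)+k$ with $\Gamma(x)$ in the covered compact set, which the decomposition $\mathcal{P}=\Gamma(\mathcal{X})+C$ supplies directly and therefore avoids any separate treatment of recession directions—this is precisely where boundedness of \eqref{(P)} under Assumption~\ref{assmp} does the essential work.
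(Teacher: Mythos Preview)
Your approach is essentially the paper's: start from the solution guaranteed by Proposition~\ref{exsol}, form the open cover $\{\Gamma(x)-\epsilon c+\Int C : x\in\bar{\mathcal{X}}_0\}$ of the compact set $\Gamma(\mathcal{X})$, and extract a finite subcover. Two small corrections are in order. First, by Definition~\ref{soln} a \emph{solution} consists of \emph{minimizers}, not weak minimizers; since the proposition asserts the existence of a finite $\epsilon$-solution (not merely a weak one), you should take $\bar{\mathcal{X}}_0$ to be a set of minimizers, and then $\hat{\mathcal{X}}\subseteq\bar{\mathcal{X}}_0$ inherits this. Second, your justification that the $U_x$ cover $\Gamma(\mathcal{X})$ (not only $\Gamma(\bar{\mathcal{X}}_0)$) is left implicit: the missing step is that every $y\in\Gamma(\mathcal{X})$ is dominated by some minimizer, i.e.\ $\Gamma(x)\leq_C y$ for some $x\in\bar{\mathcal{X}}_0$, whence $y-\Gamma(x)+\epsilon c\in C+\Int C\subseteq\Int C$; this domination property holds because $\Gamma(\mathcal{X})$ is compact and $C$ is pointed and closed.
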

\begin{proof}
As already mentioned in the proof of Proposition \ref{exsol}, there exists a `solution' $\bar{\mathcal{X}}$ in the sense of \cite[Definition 2.41]{lohne}, that is, $\bar{\mathcal{X}}\neq \emptyset$ is the set of all minimizers for \eqref{(P)} and we have $\cl(\Gamma(\bar{\mathcal{X}})+C) = \cl(\Gamma(\mathcal{X})+C)$. For arbitrary fixed $\epsilon>0$,  $\{\Gamma(x) - \epsilon c + \Int C: x \in \bar{\mathcal{X}}\}$ is an open cover of $\Gamma(\mathcal{X})$, which is a compact set by Assumptions~\ref{assmp}. Hence there is a finite subcover $\{\Gamma(x) - \epsilon c + \Int C: x \in \hat{\mathcal{X}}\}$ and $\hat{\mathcal{X}}$ is a finite $\epsilon$-solution.
\end{proof}

\subsection{Primal Algorithm}
Benson's algorithm has been extended to approximate the upper image $\mathcal{P}$ of a convex vector optimization problem in \cite{ehrgott}. In this section, we will generalize and simplify this algorithm as detailed in the introduction. 
The algorithm can be explained as follows. Start with an initial outer approximation $\mathcal{P}_0$ of $\mathcal{P}$ and compute iteratively a sequence $\mathcal{P}_0 \supseteq \mathcal{P}_1 \supseteq \mathcal{P}_2 \supseteq ... \supseteq \mathcal{P}$ of better outer approximations.

The first step in the $k^{\text{th}}$ iteration is to compute the vertices of $\mathcal{P}_k$. During the algorithm, whenever $\mathcal{P}_k$ is updated, it is given by an H-representation. To convert an H-representation into a V-representation (and vise versa) one uses {\em vertex enumeration}, see e.g. \cite{BreFukMar98}. For a vertex $v$ of $\mathcal{P}_k$, a point $y$ on the boundary of the upper image, which is in `minimum distance' to $v$, is determined. Note that  $y = \Gamma(x)+c$ for some $x\in\mathcal{X}$, and $c\in C$. We add all those $x$ to a set $\bar{\mathcal{X}}$,
where $\bar{\mathcal{X}}$ has to be initialized appropriately. This set will be shown to be a finite weak $\epsilon$-solution to \eqref{(P)} at termination. If the minimum distance is less than or equal to an error level $\epsilon > 0$, which is determined by the user, the algorithm proceeds to check another vertex of $\mathcal{P}_k$ in a similar way. If the minimum distance is greater than the error level $\epsilon$, a cutting plane, i.e., a supporting hyperplane of the upper image $\mathcal{P}$ at the point $y$ and its corresponding halfspace $H_k$ containing the upper image are calculated. The new approximation is obtained as $\mathcal{P}_{k+1}= \mathcal{P}_k \cap H_k$. The algorithm continues in the same manner, until all vertices of the current approximation are in `$\epsilon$-distance' to the upper image. The details of the algorithm are explained below.

To compute $\mathcal{P}_0$, let $Z$ be the matrix, whose columns  $z^1,\ldots,z^J$ are the generating vectors of the dual cone $C^+$ of the ordering cone $C$ and let $z^j$ be normalized in the sense that $c^Tz^j = 1$ for all $j = 1,\ldots, J$ (recall that $c \in \Int C$ is fixed). Denote $x^j \in X$ ($i=1,\dots,J$) the optimal solutions of (P$_1(z^j)$), which always exist by Assumptions \ref{assmp}~(a) and (c).
Define the halfspace 
$$H_j := \{y\in \R^q : (z^j)^Ty \geq (z^j)^T \Gamma(x^{j})\}.$$ 
Note that $t^j:=T^T z^j$ belongs to the feasible set $\mathcal{T}$ of \eqref{(D)} since $c^Tz^j = 1$ and $w(t^j)=z^j \in C^+$, compare \eqref{eq_wt}. We have $D^*(t^j) = (t^j_1, \ldots,t^j_{q-1}, (z^j)^T\Gamma(x^j))$, which implies
\begin{align} \label{HJ}
H_j = \{y\in \R^q :  \varphi(y,D^*(T^T z^j))\geq 0 \}.
\end{align}
It is easy to check that for all $j$, $H_j$ contains the upper image $\mathcal{P}$ (`weak duality'). We define the initial outer approximation as the intersection of these halfspaces, that is, 
\begin{align} \label{initialization}
\mathcal{P}_0 := \bigcap_{j=1}^J H_j. 
\end{align}
Since $C$ is pointed and \eqref{(P)} is bounded, $\mathcal{P}_0$ contains no lines. By \cite[Corollary 18.5.3]{rockafellar} we conclude that $\mathcal{P}_0$ has at least one vertex.
Vertex enumeration yields the set of all vertices. 

We will use the following convex program which depends on a parameter vector $v \in \R^q$, which typically does not belong to $\Int \mathcal{P}$,
\begin{align*}\label{P2}
\min\left\{z \in \R:\;\; g(x) \leq 0, \;\; Z^T(\Gamma(x) - zc - v) \leq 0 \right\}. \tag{P$_2(v)$}
\end{align*}
The second part of constraints can be expressed as $\Gamma(x) - zc - v\in -C$. Hence, the Lagrangian 
\begin{equation}\label{Lagrange2}
	L_v:(X \times\R)\times(\R^m\times\R^q)\to \R,\;\; L_v(x,z,u,w):=z+u^Tg(x)+w^T\Gamma(x) - w^T c z - w^T v,
\end{equation}
yields the dual problem
\begin{align*}
\max\left\{ \inf_{x \in X, z \in \R} L_v(x,z,u,w):\;\; u \geq 0, w \in C^+ \right\},
\end{align*}
which can be equivalently expressed as
\begin{align*}\label{D2}
\max\left\{\inf_{x\in X}\{u^Tg(x)+w^T\Gamma(x)\}-w^Tv : \;\; u\geq 0,\; w^Tc = 1,\; Y^Tw \geq 0 \right\}, \tag{D$_2(v)$}
\end{align*}
where $Y$ is the matrix whose columns are the generating vectors of the cone $C$.

The following propositions will be used later to prove the correctness of the algorithm.
\begin{proposition}\label{primalprop1}
For every $v \in \R^q$, there exist optimal solutions $(x^v, z^v)$ and $(u^v,w^v)$ to problems \eqref{P2} and \eqref{D2}, respectively, and the optimal values coincide.
\end{proposition}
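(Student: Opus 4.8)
The plan is to treat \eqref{P2} as a finite-dimensional convex program, to get primal attainment from compactness, and then to obtain the coincidence of values together with dual attainment from a Slater-type strong-duality theorem.

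First I would record that \eqref{P2} is indeed convex in $(x,z)$: the objective $z$ is linear, each constraint $g_i(x)\le 0$ is convex because $g$ is component-wise convex, and each row $(z^j)^T\Gamma(x)-z\,(z^j)^Tc-(z^j)^Tv\le 0$ is convex because $z^j\in C^+$ together with $C$-convexity of $\Gamma$ forces $(z^j)^T\Gamma$ to be convex, while the $z$-term is linear. Since $(z^j)^Tc=1$ for every $j$, the second block of constraints is equivalent to $z\ge \psi(x):=\max_{j=1,\dots,J}(z^j)^T(\Gamma(x)-v)$. Hence minimizing $z$ over the feasible set amounts to minimizing the continuous function $\psi$ over $\mathcal{X}$. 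By Assumption~\ref{assmp}(a),(c) the set $\mathcal{X}$ is compact and $\psi$ is continuous, so the Weierstrass theorem yields a minimizer $x^v$, and $(x^v,z^v)$ with $z^v:=\psi(x^v)$ is an optimal solution of \eqref{P2}; in particular \eqref{P2} is feasible (any $x\in\mathcal{X}$ with $z$ large) and its value is finite.

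Next I would identify \eqref{D2} as the Lagrangian dual of \eqref{P2} over the domain $X\times\R$: the Lagrangian is \eqref{Lagrange2}, and the infimum over $z\in\R$ of the affine-in-$z$ term $z(1-w^Tc)$ is finite precisely when $w^Tc=1$, which produces that constraint (together with $u\ge 0$ and $w\in C^+$, the latter being $Y^Tw\ge 0$). To conclude both the absence of a duality gap and the attainment of a dual optimal $(u^v,w^v)$, I would invoke the standard strong-duality theorem for convex programs: if the primal is feasible with finite value and Slater's condition holds, then the optimal values coincide and the set of dual optimal solutions is nonempty (and bounded). The remaining task is therefore to verify Slater's condition for \eqref{P2}.

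To exhibit a Slater point I would use Assumption~\ref{assmp}(b): since $\mathcal{X}$ has non-empty interior, there is $\hat x$ with $g(\hat x)<0$, and this $\hat x$ lies in $\ri X=\Int X$ because $\mathcal{X}\subseteq X$ is then full-dimensional. Choosing $\hat z$ larger than $\max_j (z^j)^T(\Gamma(\hat x)-v)$ makes the second block of constraints strict as well, so $(\hat x,\hat z)$ strictly satisfies all inequality constraints and Slater holds. The main obstacle is precisely this step: one must produce a strictly feasible $\hat x$ (i.e.\ $g(\hat x)<0$) out of the non-empty-interior hypothesis, whereas the constraints $Z^T(\Gamma(x)-zc-v)\le 0$ are trivially made strict by enlarging $z$; and one should be careful to cite the version of the duality theorem that guarantees dual \emph{attainment}, not merely a zero gap.
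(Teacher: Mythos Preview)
Your proposal is correct and follows essentially the same approach as the paper: compactness of $\mathcal{X}$ (Assumption~\ref{assmp}(a),(c)) for primal attainment, and Slater's condition---obtained from Assumption~\ref{assmp}(b) together with enlarging $z$ to make the block $Z^T(\Gamma(x)-zc-v)\le 0$ strict---for strong duality with dual attainment. Your primal-existence step, which reduces \eqref{P2} to minimizing the continuous function $\psi(x)=\max_j (z^j)^T(\Gamma(x)-v)$ over the compact set $\mathcal{X}$, is in fact a bit tidier than the paper's version, which argues directly that the feasible set of \eqref{P2} in $(x,z)$ is compact.
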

\begin{proof} $\mathcal{X}$ is compact by Assumption \ref{assmp}~(a). The set $\mathcal{X}_2:=\{x\in X : Z^T(\Gamma(x)-zc-v) \leq 0\}$ is closed as $\Gamma$ is continuous by Assumption \ref{assmp}~(c). Thus the feasible set $\mathcal{X} \cap \mathcal{X}_2$ for \eqref{P2} is compact, which implies the existence of an optimal solution $(x^v,z^v)$ of \eqref{P2}. By Assumption \ref{assmp}~(b) there exists $x^0 \in X$ with $g(x^0)<0$. Since $c\in \Int C$, we have $Z^Tc>0$. Taking $z^0$ large enough, we obtain $Z^Tv+z^0 Z^T c > Z^T\Gamma(x^0)$. Hence, $(x^0,z^0)$ satisfies Slater's condition. Convex programming duality implies the existence of a solution $(u^v,w^v)$ of \eqref{D2} and coincidence of the optimal values.
\end{proof}

\begin{proposition}\label{primalprop2}
Let $(x^v, z^v)$  be an optimal solution of \eqref{P2} for $v \in \R^q$. Then, $x^v$ is a weak minimizer of \eqref{(P)}, and $y^v := v + z^v c \in \wMinc(\mathcal{P})$. Moreover, $v \in \wMinc(\mathcal{P})$ if and only if $z^v = 0$.
\end{proposition}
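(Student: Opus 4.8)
The plan is to first reinterpret the feasibility condition of \eqref{P2} geometrically. Since the columns of $Z$ generate $C^+$, one has $C=\{y\in\R^q : Z^Ty\geq 0\}$, so the constraints $g(x)\leq 0$ and $Z^T(\Gamma(x)-zc-v)\leq 0$ are solvable in $x$ if and only if $\Gamma(x)\leq_C v+zc$ for some $x\in\mathcal{X}$, i.e. if and only if $v+zc\in\Gamma(\mathcal{X})+C=\mathcal{P}$. Thus $z^v=\min\{z\in\R : v+zc\in\mathcal{P}\}$, and $y^v=v+z^v c$ is the first point at which the ray $\{v+zc : z\in\R\}$ enters $\mathcal{P}$; in particular $y^v\in\mathcal{P}$. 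Existence of the minimizer is guaranteed by Proposition~\ref{primalprop1}.

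Next I would establish $y^v\in\wMinc(\mathcal{P})$ by contradiction. Suppose there were $p\in\mathcal{P}$ with $y^v-p\in\Int C$. Since $\Int C$ is open and $c$ is a fixed vector, $(y^v-p)-\delta c\in C$ for all sufficiently small $\delta>0$, whence $y^v-\delta c\in p+C\subseteq\mathcal{P}+C=\mathcal{P}$. This means $v+(z^v-\delta)c\in\mathcal{P}$, contradicting the minimality of $z^v$. Hence no such $p$ exists and $y^v$ is weakly $C$-minimal in $\mathcal{P}$.

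To obtain that $x^v$ is a weak minimizer, I would first upgrade $\Gamma(x^v)$ to a weakly $C$-minimal point of $\mathcal{P}$. From feasibility, $\Gamma(x^v)\leq_C y^v$, i.e. $y^v-\Gamma(x^v)\in C$. If $\Gamma(x^v)$ were not weakly $C$-minimal in $\mathcal{P}$, there would be $p\in\mathcal{P}$ with $\Gamma(x^v)-p\in\Int C$; adding the two memberships and using $C+\Int C\subseteq\Int C$ gives $y^v-p\in\Int C$ with $p\in\mathcal{P}$, contradicting the previous step. Therefore $\Gamma(x^v)\in\wMinc(\mathcal{P})$, and since $\Gamma(x^v)\in\Gamma(\mathcal{X})\subseteq\mathcal{P}$, the relation $(\{\Gamma(x^v)\}-\Int C)\cap\mathcal{P}=\emptyset$ immediately forces $(\{\Gamma(x^v)\}-\Int C)\cap\Gamma(\mathcal{X})=\emptyset$, i.e. $x^v$ is a weak minimizer of \eqref{(P)}.

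Finally, for the equivalence: if $z^v=0$ then $y^v=v$, so $v\in\wMinc(\mathcal{P})$ by the second step. Conversely, if $v\in\wMinc(\mathcal{P})$ then $v\in\mathcal{P}$, so $z=0$ is feasible for \eqref{P2} and thus $z^v\leq 0$; and if $z^v<0$ we would have $y^v=v-|z^v|c\in\mathcal{P}$ with $v-y^v=|z^v|c\in\Int C$, i.e. $y^v\in(\{v\}-\Int C)\cap\mathcal{P}$, contradicting $v\in\wMinc(\mathcal{P})$. Hence $z^v=0$. The individual steps are short; the parts demanding the most care are the openness argument in the second paragraph (subtracting a small multiple of $c$ while staying in $C$) and the cone identity $C+\Int C\subseteq\Int C$ in the third, both of which rely essentially on the hypothesis $c\in\Int C$.
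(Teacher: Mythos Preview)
Your proof is correct. The organization differs somewhat from the paper's: the paper first shows directly that $x^v$ is a weak minimizer (by exhibiting, from a hypothetical $\bar x$ with $\Gamma(\bar x)<_C\Gamma(x^v)$, a feasible pair $(\bar x,z^v-\varepsilon)$ for \eqref{P2}), and only afterwards proves $y^v\in\wMinc(\mathcal P)$. You instead begin with the clean geometric reformulation $z^v=\min\{z: v+zc\in\mathcal P\}$ (valid here because Assumption~\ref{assmp} gives $\mathcal P=\Gamma(\mathcal X)+C$ without closure), prove $y^v\in\wMinc(\mathcal P)$ first, and then deduce $\Gamma(x^v)\in\wMinc(\mathcal P)$ via $C+\Int C\subseteq\Int C$; the weak-minimizer property of $x^v$ then drops out from $\Gamma(\mathcal X)\subseteq\mathcal P$. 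This route is arguably tidier, since the single characterization of $z^v$ drives every step. You also supply a proof of the ``Moreover'' equivalence $v\in\wMinc(\mathcal P)\Leftrightarrow z^v=0$, which the paper states but does not prove; your argument for it is correct.
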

\begin{proof} Suppose $x^v$ is not a weak minimizer of \eqref{(P)}, i.e., $\Gamma(\bar{x}) <_C \Gamma(x^v)$ for some $\bar{x} \in \mathcal{X}$. We have $\Gamma(\bar x) = \Gamma(x^v) -\bar c$ for some $\bar c \in \Int C$ and there exists $\varepsilon > 0$ such that $\bar c - \varepsilon c \in C$, hence
$$ \Gamma(\bar x) = \Gamma(x^v) -\bar c \leq_C \Gamma(x^v) -\varepsilon c \leq_C (z^v-\varepsilon)c  + v.$$
Multiplying by the matrix $Z$ whose columns are the generating vectors of $C^+$, we get $Z^T \Gamma(\bar x) \leq Z^T((z^v-\varepsilon)c  + v)$, which implies that $(\bar x,z^v-\varepsilon)$ is feasible for \eqref{P2} but generates a smaller value than the optimal value $z^v$, a contradiction.

To show that $y^v \in \wMinc(\mathcal{P})$, first note that $Z^T(z^vc+v-\Gamma(x^v)) \geq 0$. Since $C$ is given by $Z$ as $C=\{y \in \R^q : Z^T y \geq 0 \}$, we have $z^v c+v-\Gamma(x^v) \in C$, i.e., $\Gamma(x^v) \leq_C y^v$ and thus $y^v \in \mathcal{P}$. Suppose that $y^v \in \mathcal{P} + \Int C$. Then $v + z^v c \in \mathcal{P} + \Int C = \Int \mathcal{P}$ (see e.g. \cite[Corollary 1.48 (iii)]{lohne} for the last equation). There exists $\varepsilon>0$ with $v + (z^v-\varepsilon) c \in \mathcal{P}$, i.e., there is $\bar x \in \mathcal{X}$ with $v + (z^v-\varepsilon) c \in \Gamma(\bar x)+C$. This means that $(\bar x,z^v-\varepsilon)$ is feasible for \eqref{P2} and has a smaller value than the optimal value $z^v$, a contradiction.
\end{proof}

\begin{proposition}\label{primalprop3}
Let  $v \in \R^q$ and let $(x^v, z^v)$ and $(u^v,w^v)$ be optimal solutions to \eqref{P2} and \eqref{D2}, respectively. Then, $t^v:=T^T w^v$ is a maximizer for \eqref{(D)} and 
\begin{align}\label{eqdualprimal}
D^*_q (t^v) = (w^v)^T\Gamma(x^v) + (u^v)^Tg(x^v) = (w^v)^Tv+z^v.
\end{align}
\end{proposition}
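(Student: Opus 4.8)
The plan is to translate the dual data $(u^v,w^v)$ of \eqref{D2} into the geometric dual \eqref{(D)}, read off maximality from Proposition~\ref{dualprop0}, and then extract the two scalar identities from strong duality together with complementary slackness. First I would record the two feasibility facts coming from \eqref{D2}: $(w^v)^Tc=1$ and $w^v\in C^+$ (the latter because the constraint $Y^Tw^v\geq 0$ exactly characterizes membership in $C^+$). By the equivalence \eqref{eq_wt}, $t^v:=T^Tw^v$ then satisfies $w(t^v)=w^v\in C^+$ and $t^v_q=1$; in particular $t^v\in\mathcal T$ is feasible for \eqref{(D)} and $D^*_q(t^v)=\inf_{x\in\mathcal X}(w^v)^T\Gamma(x)$ by the definition of $D^*$. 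For maximality I would apply Proposition~\ref{dualprop0} with this $t^v$: since $\mathcal X$ is compact and $\Gamma$ continuous (Assumption~\ref{assmp}(a),(c)), (P$_1(w^v)$) has a finite optimal value, so $D^*(t^v)$ is $K$-maximal in $\mathcal D$. As $D^*(\mathcal T)\subseteq\mathcal D$, being $K$-maximal in the larger set $\mathcal D$ a fortiori makes $D^*(t^v)$ $K$-maximal in $D^*(\mathcal T)$, which is exactly the definition of a maximizer in Definition~\ref{dualsoln}.

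For the second equality I would use Proposition~\ref{primalprop1}: the optimal values of \eqref{P2} and \eqref{D2} coincide and equal $z^v$, so $z^v=\phi_{w^v}(u^v)-(w^v)^Tv$ with $\phi_{w^v}(u^v)=\inf_{x\in X}\{(u^v)^Tg(x)+(w^v)^T\Gamma(x)\}$. To extract complementary slackness I would evaluate the Lagrangian at the primal optimum and observe $L_v(x^v,z^v,u^v,w^v)=z^v+(u^v)^Tg(x^v)+(w^v)^T(\Gamma(x^v)-z^vc-v)$. On one hand $z^v=\inf_{x,z}L_v(x,z,u^v,w^v)\leq L_v(x^v,z^v,u^v,w^v)$; on the other, feasibility of $(x^v,z^v)$ gives $(u^v)^Tg(x^v)\leq 0$ and, because $w^v\in C^+$ and $\Gamma(x^v)-z^vc-v\in -C$ (i.e.\ $Z^T(\Gamma(x^v)-z^vc-v)\leq 0$), also $(w^v)^T(\Gamma(x^v)-z^vc-v)\leq 0$, whence $L_v(x^v,z^v,u^v,w^v)\leq z^v$. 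The sandwich forces both nonpositive terms to vanish; using $(w^v)^Tc=1$ the second one rearranges to $(w^v)^T\Gamma(x^v)=z^v+(w^v)^Tv$, and adding $(u^v)^Tg(x^v)=0$ gives the second equality of \eqref{eqdualprimal}.

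For the first equality I would show that $x^v$ minimizes the weighted objective over the whole feasible set. The previous step yields $\phi_{w^v}(u^v)=z^v+(w^v)^Tv=(w^v)^T\Gamma(x^v)$. Weak Lagrangian duality for the scalar problem (P$_1(w^v)$), valid for any multiplier $u^v\geq 0$, gives $\phi_{w^v}(u^v)\leq\inf_{x\in\mathcal X}(w^v)^T\Gamma(x)=D^*_q(t^v)$, while $x^v\in\mathcal X$ gives $D^*_q(t^v)\leq(w^v)^T\Gamma(x^v)=\phi_{w^v}(u^v)$; hence equality holds throughout, and since $(u^v)^Tg(x^v)=0$ this is precisely the first equality of \eqref{eqdualprimal}.

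I expect the main obstacle to be this last step, namely bridging the infimum over $X$ of the Lagrangian that \eqref{D2} delivers and the infimum over the constrained set $\mathcal X$ of the weighted objective $(w^v)^T\Gamma$ that defines $D^*_q$. This link is not purely algebraic: it requires combining the strong duality of \eqref{P2}/\eqref{D2} with the separate weak duality of the scalarization (P$_1(w^v)$), effectively verifying that the multiplier $u^v$ obtained from \eqref{D2} is an optimal multiplier for (P$_1(w^v)$). The complementary-slackness extraction itself is routine but must treat the two constraint blocks of \eqref{P2} simultaneously and use $\Gamma(x^v)-z^vc-v\in -C$ to sign the $w^v$-term correctly.
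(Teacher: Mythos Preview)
Your argument is correct and follows essentially the same route as the paper: feasibility of $(u^v,w^v)$ in \eqref{D2} plus \eqref{eq_wt} and Proposition~\ref{dualprop0} yield maximality, and the saddle-point/strong-duality relation for \eqref{P2}/\eqref{D2} delivers the second equality in \eqref{eqdualprimal}. The only notable difference is in establishing the first equality: the paper asserts that $u^v$ is optimal for (D$_1(w^v)$) and then invokes strong duality between (P$_1(w^v)$) and (D$_1(w^v)$), whereas you derive complementary slackness explicitly and then close the gap by the sandwich $\phi_{w^v}(u^v)\le D^*_q(t^v)\le (w^v)^T\Gamma(x^v)=\phi_{w^v}(u^v)$ using only weak duality for (P$_1(w^v)$); your version is slightly more self-contained since it does not require a separate verification of strong duality for the scalarized pair.
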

\begin{proof} As $(u^v,w^v)$ is feasible for \eqref{D2}, we have $w^v \in C^+$ and $c^Tw^v=1$. By \eqref{eq_wt}, we obtain $w^v=w(t^v) \in C^+$. Hence $t^v$ is a maximizer of \eqref{(D)} by Proposition \ref{dualprop0} and the fact that \eqref{P1} has an optimal solution by Assumption~\ref{assmp}. It remains to show (\ref{eqdualprimal}). Since $(x^v,z^v,u^v,w^v)$ is a saddle point of the Lagrangian $L_v$ in  \eqref{Lagrange2}, and taking into account that $c^T w^v=1$, we get
$$ z^v =  (u^v)^Tg(x^v)+(w^v)^T\Gamma(x^v) - (w^v)^T v = 
          \inf_{x\in X}\left\{(u^v)^T g(x) + (w^v)^T \Gamma(x) \right\} - (w^v)^T v.$$
This yields \eqref{eqdualprimal} if we can show that
$$ \inf_{x\in\mathcal{X}}(w^v)^T\Gamma(x) = \inf_{x\in X}\left\{(u^v)^T g(x) + (w^v)^T \Gamma(x) \right\}.$$
But, if $w^v$ is considered to be a parameter, $u^v$ is an optimal solution for (D$_1(w^v)$) and the desired statement follows from strong duality between (P$_1(w^v)$) and (D$_1(w^v)$). 
\end{proof}

The next step is to show that a supporting hyperplane of $\mathcal{P}$ at $y^v = v + z^vc$ can be found using an optimal solution of \eqref{D2}.

\begin{proposition}\label{primalprop4}
Let $v \in \R^q$ and let $(x^v,z^v)$ and $(u^v,w^v)$ be optimal solutions for \eqref{P2} and \eqref{D2}, respectively. For $t^v:=T^T w^v$, $\mathcal{H}:= H(D^*(t^v)) = \{y\in\R^q: \varphi(y,D^*(t^v))= 0\}$ is a supporting hyperplane of $\mathcal{P}$ at $y^v = v + z^vc$.
\end{proposition}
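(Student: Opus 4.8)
The plan is to show that the hyperplane $\mathcal{H}=H(D^*(t^v))$ passes through $y^v$ and that $\mathcal{P}$ lies entirely on one side of it, which together establish that $\mathcal{H}$ is a supporting hyperplane of $\mathcal{P}$ at $y^v$. First I would unpack what $\varphi(y,D^*(t^v))$ actually is. Since $w^v=w(t^v)$ and $c^Tw^v=1$, the relation \eqref{eq_wt} tells us $t^v=T^Tw^v$ and $t^v_q=1$, so $(D^*(t^v)_1,\dots,D^*(t^v)_{q-1})=(t^v_1,\dots,t^v_{q-1})$ while $D^*_q(t^v)$ is the dual objective value computed in Proposition~\ref{primalprop3}. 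Plugging $y^*=D^*(t^v)$ into the definition $\varphi(y,y^*)=(y^*_1,\dots,y^*_{q-1},1)T^{-1}y-y^*_q$ and using $w(t^v)^T=(t^v_1,\dots,t^v_{q-1},1)T^{-1}$, I would simplify to obtain $\varphi(y,D^*(t^v))=(w^v)^Ty-D^*_q(t^v)$. This is the key reduction that turns the abstract duality map into a concrete linear functional.

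Next I would verify $y^v\in\mathcal{H}$, i.e. $\varphi(y^v,D^*(t^v))=0$. Using the expression just derived, this amounts to $(w^v)^Ty^v=D^*_q(t^v)$. Since $y^v=v+z^vc$ and $c^Tw^v=1$, the left side is $(w^v)^Tv+z^v$, which is exactly the right-hand expression for $D^*_q(t^v)$ established in \eqref{eqdualprimal} of Proposition~\ref{primalprop3}. This gives the incidence $y^v\in\mathcal{H}$ immediately.

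For the supporting property I must show $\varphi(y,D^*(t^v))\geq 0$ for all $y\in\mathcal{P}$, equivalently $(w^v)^Ty\geq D^*_q(t^v)$ on $\mathcal{P}$. Because $\mathcal{P}=\Gamma(\mathcal{X})+C$ under Assumption~\ref{assmp}, it suffices to check this on points $\Gamma(x)+k$ with $x\in\mathcal{X}$, $k\in C$. Since $w^v\in C^+$ we have $(w^v)^Tk\geq 0$, so it reduces to $(w^v)^T\Gamma(x)\geq D^*_q(t^v)=\inf_{x'\in\mathcal{X}}(w^v)^T\Gamma(x')$ for all $x\in\mathcal{X}$, which holds by definition of the infimum — here invoking the identity $D^*_q(t^v)=\inf_{x\in\mathcal{X}}(w^v)^T\Gamma(x)$ that comes from the last displayed equation in the proof of Proposition~\ref{primalprop3}. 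Combining the incidence and the one-sided inequality yields that $\mathcal{H}$ supports $\mathcal{P}$ at $y^v$.

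The main obstacle, and the part worth stating carefully, is the bookkeeping in the first paragraph: correctly identifying $\varphi(\cdot,D^*(t^v))$ with the linear functional $(w^v)^T(\cdot)-D^*_q(t^v)$ via the matrix $T$ and the normalization $c^Tw^v=1$. Everything afterward is a short consequence of Proposition~\ref{primalprop3} and membership $w^v\in C^+$, so once that algebraic identification is in place the proof is essentially immediate.
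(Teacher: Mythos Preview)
Your argument is correct, and the algebraic identification $\varphi(y,D^*(t^v))=(w^v)^Ty-D^*_q(t^v)$ together with the incidence $y^v\in\mathcal{H}$ matches the paper exactly. The route for the supporting inequality, however, is genuinely different. The paper stays inside the \eqref{P2}/\eqref{D2} framework: for $y\in\mathcal{P}$ it takes a feasible point $(\bar x,\bar z)$ of (P$_2(y)$) with $\bar z\leq 0$, observes that $(u^v,w^v)$ is feasible for (D$_2(y)$), and applies weak duality to get $0\geq \bar z\geq z^v+(w^v)^T(v-y)$, hence $(w^v)^Ty\geq D^*_q(t^v)$. You instead bypass this and argue directly from $\mathcal{P}=\Gamma(\mathcal{X})+C$, $w^v\in C^+$, and the identity $D^*_q(t^v)=\inf_{x\in\mathcal{X}}(w^v)^T\Gamma(x)$ obtained in Proposition~\ref{primalprop3}. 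Your version is shorter and avoids a second appeal to Lagrangian duality; the paper's version has the virtue of using only feasibility of $(u^v,w^v)$ and would work verbatim without the strong-duality step buried in Proposition~\ref{primalprop3}. Two small points to tighten: you should record that $y^v\in\mathcal{P}$ (this is Proposition~\ref{primalprop2}, and is needed for ``supporting \emph{at} $y^v$''), and that $\mathcal{H}$ is actually a hyperplane, i.e.\ $w^v\neq 0$, which is immediate from $c^Tw^v=1$.
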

\begin{proof} 
By Proposition~\ref{primalprop2} and \eqref{eq_wt}, $y^v \in \mathcal{P} \cap \mathcal{H}$. Since $w(D^*(t^v))= w(t^v)\neq 0$, $\mathcal{H}$ is a hyperplane. Let $y\in \mathcal{P}$ and let $(\bar x,\bar z)$ be feasible for (P$_2(y)$). Then, we have $\bar z\leq 0$. Of course, $(u^v,w^v)$ is feasible for (D$_2(y)$). Using weak duality for (P$_2(y)$)/(D$_2(y)$) and strong duality for \eqref{P2}/\eqref{D2}, we obtain 
$$0\geq \bar z \geq \inf_{x\in X}\left\{ (u^v)^T g(x) + (w^v)^T \Gamma(x)\right\} -(w^v)^T v + (w^v)^T (v-y)=z^v+(w^v)^T(v-y).$$ 
Using \eqref{eqdualprimal} and \eqref{eq_wt} we conclude that $\varphi(y,D^*(t^v)) = (w^v)^Ty-D^*_q(t^v) \geq 0$.
\end{proof}

\begin{proposition}\label{approximationremark}
For $\epsilon>0$, let $\bar{\mathcal{X}}$ be a finite (weak) $\epsilon$-solution of \eqref{(P)}, and define $\mathcal{P}_{\epsilon} := 
\conv \Gamma(\bar{\mathcal{X}})+C-\epsilon\{c\}$. Then, ${\mathcal{D}}_{\epsilon} := \{y^*\in \R^q: \varphi(y,y^*) \geq 0, \forall y \in \mathcal{P}_{\epsilon}\}$ is an inner $\epsilon$-approximation of the lower image $\mathcal{D}$, that is, ${\mathcal{D}}_\epsilon + \epsilon \{e^q\} \supseteq \mathcal{D} \supseteq {\mathcal{D}}_{\epsilon}$. 

Similarly, let $\bar{\mathcal{T}}$ be a finite $\epsilon$-solution of \eqref{(D)}, and define $\mathcal{D}_{\epsilon} := \conv D^*(\bar{\mathcal{T}})-K+\epsilon \{e^q\}$. Then, ${\mathcal{P}}_{\epsilon} := \{y\in \R^q: \varphi(y,y^*) \geq 0, \forall y^* \in \mathcal{D}_{\epsilon}\}$ is an inner $\epsilon$-approximation of the upper image $\mathcal{P}$, that is,  ${\mathcal{P}}_\epsilon - \epsilon \{c\} \supseteq\mathcal{P} \supseteq{\mathcal{P}}_{\epsilon}$.
\end{proposition}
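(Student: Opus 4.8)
The plan is to reduce the statement to two elementary ingredients: the two-sided inclusions already packaged in the $\epsilon$-solution definitions, and a translation identity for $\varphi$. First I would unpack Definition~\ref{weaksoln}. Since $\bar{\mathcal{X}}$ is a finite (weak) $\epsilon$-solution, the inner/outer approximation stated right after that definition reads
\begin{align*}
\mathcal{P}_\epsilon = \conv\Gamma(\bar{\mathcal{X}})+C-\epsilon\{c\} \;\supseteq\; \mathcal{P} \;\supseteq\; \conv\Gamma(\bar{\mathcal{X}})+C = \mathcal{P}_\epsilon+\epsilon\{c\}.
\end{align*}
Thus I may work with the single set $\mathcal{P}_\epsilon$, keeping in mind the two facts $\mathcal{P}\subseteq\mathcal{P}_\epsilon$ and $\mathcal{P}_\epsilon+\epsilon\{c\}\subseteq\mathcal{P}$.

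The technical core I would record next is a translation identity for $\varphi$. Because $T=(c^1,\dots,c^{q-1},c)$ satisfies $T^{-1}c=e^q$, one computes $c^Tw(y^*)=(y^*_1,\dots,y^*_{q-1},1)T^{-1}c=1$ for \emph{every} $y^*\in\R^q$. Since $\varphi(y,y^*)=w(y^*)^Ty-y^*_q$ and $w(\cdot)$ does not depend on the last coordinate, this yields
\begin{align*}
\varphi(y+\epsilon c,\,y^*) \;=\; \varphi(y,\,y^*-\epsilon e^q) \;=\; \varphi(y,y^*)+\epsilon.
\end{align*}
This is precisely the mechanism that converts a shift by $\epsilon c$ in the primal argument into a shift by $\epsilon e^q$ in the dual argument; establishing and correctly orienting it is the step I expect to be the main obstacle, everything else being short.

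With these in hand the two required inclusions follow. For $\mathcal{D}\supseteq\mathcal{D}_\epsilon$: if $y^*\in\mathcal{D}_\epsilon$ then $\varphi(y,y^*)\ge0$ for all $y\in\mathcal{P}_\epsilon$, hence in particular for all $y\in\mathcal{P}\subseteq\mathcal{P}_\epsilon$, so $y^*\in\mathcal{D}$ by Proposition~\ref{strongdual} (equation~\eqref{Di} with $\bar{\mathcal{P}}=\mathcal{P}$, $\bar{\mathcal{D}}=\mathcal{D}$). For $\mathcal{D}_\epsilon+\epsilon\{e^q\}\supseteq\mathcal{D}$: take $y^*\in\mathcal{D}$ and verify $y^*-\epsilon e^q\in\mathcal{D}_\epsilon$. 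For any $y\in\mathcal{P}_\epsilon$ the point $y+\epsilon c$ lies in $\mathcal{P}_\epsilon+\epsilon\{c\}\subseteq\mathcal{P}$, so weak duality ($\varphi\ge0$ on $\mathcal{P}\times\mathcal{D}$, again Proposition~\ref{strongdual}) together with the translation identity gives
\begin{align*}
\varphi(y,\,y^*-\epsilon e^q) \;=\; \varphi(y+\epsilon c,\,y^*) \;\ge\; 0,
\end{align*}
proving $y^*-\epsilon e^q\in\mathcal{D}_\epsilon$. The second assertion of the proposition is the mirror image: I would start from the inner/outer inclusions in Definition~\ref{weakdualsoln}, namely $\mathcal{D}_\epsilon\supseteq\mathcal{D}\supseteq\mathcal{D}_\epsilon-\epsilon\{e^q\}$, and run the identical argument with the roles of $\varphi$'s two arguments exchanged, invoking equation~\eqref{Po} of Proposition~\ref{strongdual} in place of~\eqref{Di}; the inclusion $\mathcal{P}\supseteq\mathcal{P}_\epsilon$ comes from $\mathcal{D}_\epsilon\supseteq\mathcal{D}$, and $\mathcal{P}_\epsilon-\epsilon\{c\}\supseteq\mathcal{P}$ comes from the same translation identity applied to $y^*\in\mathcal{D}_\epsilon=\bigl(\mathcal{D}_\epsilon-\epsilon\{e^q\}\bigr)+\epsilon\{e^q\}$ with $\mathcal{D}_\epsilon-\epsilon\{e^q\}\subseteq\mathcal{D}$.
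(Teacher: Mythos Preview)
Your proposal is correct and follows essentially the same approach as the paper's proof: both hinge on the translation identity $\varphi(y,y^*+ze^q)=\varphi(y-zc,y^*)$ (which you state in the equivalent form $\varphi(y+\epsilon c,y^*)=\varphi(y,y^*-\epsilon e^q)$), combined with the sandwich inclusions from the $\epsilon$-solution definitions and Proposition~\ref{strongdual}. The paper's proof merely records the identity and declares the rest ``straightforward,'' whereas you have spelled out the two inclusions explicitly; no substantive difference in strategy.
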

\begin{proof} 
In the first statement $\mathcal{P}_{\epsilon}$ is an outer approximation of the upper image $\mathcal{P}$. Using \eqref{eq_wt}, we obtain that $\varphi(y,y^*+ze^q) = \varphi(y-zc,y^*)$ for arbitrary $y,y^* \in \R^q$, $z\in \R$. Now it is straightforward to show that ${\mathcal{D}}_\epsilon + \epsilon \{e^q\} \supseteq \mathcal{D} \supseteq {\mathcal{D}}_{\epsilon}$, where Proposition \ref{strongdual} is useful. The second statement can be proven similarly. 
\end{proof}

We are now ready to present the Primal Approximation Algorithm to solve \eqref{(P)} and \eqref{(D)}. 

\begin{algorithm}
\caption{Primal Approximation Algorithm for \eqref{(P)} and \eqref{(D)}}
\begin{algorithmic}[1] \label{alg_1}
\STATE Compute optimal solutions $x^j$ of (P$_1(z^j)$) for $j=1,\ldots,J$;
\STATE Store an H-representation $\mathcal{P}^H$ of $\mathcal{P}_0$ according to \eqref{initialization};
\STATE $k \leftarrow 0$; $\bar{\mathcal{X}} \leftarrow \{x^1,\ldots, x^J\}$; $\bar{\mathcal{T}} \leftarrow \{T^T z^1, \ldots, T^T z^J\}$; 
\STATE $D\leftarrow D^*(\bar{\mathcal{T}})$, where $D^*_q(T^T z^j) = (z^j)^T \Gamma(x^j)$, $j=1,\dots,J$; 
\REPEAT 
\STATE $M \leftarrow \R^q$;
\STATE Compute the set $\mathcal{P}^V$  of vertices of $\mathcal{P}_k$ from its H-representation $\mathcal{P}^H$;
	\FOR{$i=1:\abs{\mathcal{P}^V}$}
		\STATE Let $v$ be the $i^{th}$ element of $\mathcal{P}^V$ (i.e. the $i^{th}$ vertex of $\mathcal{P}_k$);
		\STATE Compute optimal solutions $(x^v, z^v)$ to \eqref{P2} and $(u^v, w^v)$ to \eqref{D2};
		\STATE $\bar{\mathcal{X}} \gets \bar{\mathcal{X}} \cup \{x^v\}$; $\bar{\mathcal{T}}\gets \bar{\mathcal{T}} \cup \{T^T w^v\}$; Update $D$ using \eqref{eqdualprimal} such that $D=D^*(\bar{\mathcal{T}})$;		
		\IF{$z^v > \epsilon$}	
 			\STATE $M \gets M \cap \{ y \in \R^q : \varphi(y,D^*(T^T w^v))\geq 0 \}$; 		
			\STATE break; (optional) 
		\ENDIF
	\ENDFOR
\IF{$M\neq\R^q$}
\STATE Store in $\mathcal{P}^H$ an H-representation of $\mathcal{P}_{k+1}=\mathcal{P}_k \cap M$ and set $k \gets k+1$; 
\ENDIF
\UNTIL $M=\R^q$
\STATE Compute the vertices $\mathcal{V}$ of $\{y\in\R^q: \varphi(y,y^*)\geq 0, \forall y^* \in D=D^*(\bar{\mathcal{T} } )\}$;
\RETURN $\left\{ \begin{array}{ll}
\bar{\mathcal{X}}&: \text{A finite weak~}\epsilon \text{-solution to }\eqref{(P)}; \\
\bar{\mathcal{T}}&: \text{A finite~}\epsilon \text{-solution to }\eqref{(D)};\\
\mathcal{V} &:  \text{Vertices of an outer~}\epsilon \text{-approximation of ~} \mathcal{P};\\
\Gamma(\bar{\mathcal{X}}) &: \text{Vertices of an inner~}\epsilon \text{-approximation of ~} \mathcal{P}.\\
\end{array} \right.$
\end{algorithmic}
\end{algorithm}

\begin{theorem}\label{thmprimal}
Under Assumption~\ref{assmp} Algorithm~\ref{alg_1} works correctly: If the algorithm terminates, it returns a finite weak $\epsilon$-solution $\bar{\mathcal{X}}$ to \eqref{(P)}, and a finite $\epsilon$-solution $\bar{\mathcal{T}}$ to \eqref{(D)}.
\end{theorem}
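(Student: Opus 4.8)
The plan is to unpack the two conclusions into four requirements and verify each: $\bar{\mathcal{X}}$ consists only of weak minimizers, $\bar{\mathcal{X}}$ is a finite $\epsilon$-infimizer (inclusion \eqref{eqweaksoln}), $\bar{\mathcal{T}}$ consists only of maximizers, and $\bar{\mathcal{T}}$ is a finite $\epsilon$-supremizer (inclusion \eqref{eqweakdualsoln}). The two membership statements are essentially bookkeeping. Every point added to $\bar{\mathcal{X}}$ is either an optimal solution $x^j$ of (P$_1(z^j)$) with $z^j \in C^+\setminus\{0\}$, hence a weak minimizer by Proposition~\ref{dualprop1}, or an $x^v$ from an optimal solution of \eqref{P2}, a weak minimizer by Proposition~\ref{primalprop2}. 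Dually, each element of $\bar{\mathcal{T}}$ is either $T^T z^j$, whose image $D^*(T^T z^j)$ is $K$-maximal by Proposition~\ref{dualprop0}, or $T^T w^v$, a maximizer by Proposition~\ref{primalprop3}.

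For the primal $\epsilon$-infimizer property I would argue geometrically on the final approximation $\mathcal{P}_k$. At termination $M=\R^q$, so the last pass of the loop ran over all vertices of $\mathcal{P}_k$ without triggering a cut; every such vertex $v$ therefore satisfied $z^v\le\epsilon$ and had its $x^v$ inserted into $\bar{\mathcal{X}}$ (so the optional break does not harm this). For such a $v$, Proposition~\ref{primalprop2} gives $y^v=v+z^vc\in\mathcal{P}$ with $\Gamma(x^v)\le_C y^v$; writing $v=\Gamma(x^v)+(y^v-\Gamma(x^v))+(\epsilon-z^v)c-\epsilon c$ and using $\epsilon-z^v\ge 0$ places $v\in\conv\Gamma(\bar{\mathcal{X}})+C-\epsilon\{c\}$. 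To lift this from vertices to all of $\mathcal{P}_k$ I would compute the recession cone: each initializing halfspace $H_j$ has recession cone $\{y:(z^j)^Ty\ge 0\}$, and since the $z^j$ generate $C^+$ these intersect to $C$, while every cutting halfspace has recession cone $\{y:(w^v)^Ty\ge 0\}\supseteq C$ because $w^v\in C^+$. Hence the recession cone of $\mathcal{P}_k$ is exactly $C$, so $\mathcal{P}_k=\conv\{\text{its vertices}\}+C$. As $\conv\Gamma(\bar{\mathcal{X}})+C-\epsilon\{c\}$ is convex with $C$ in its recession cone and contains every vertex, it contains $\mathcal{P}_k$; together with $\mathcal{P}\subseteq\mathcal{P}_k$ this yields \eqref{eqweaksoln}.

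For the dual $\epsilon$-supremizer property I would avoid a fresh geometric argument and instead use the duality relations. Put $\bar{\mathcal{D}}:=\conv D^*(\bar{\mathcal{T}})-K$ and let $\mathcal{P}^{out}:=\{y\in\R^q:\varphi(y,y^*)\ge 0\ \forall y^*\in D^*(\bar{\mathcal{T}})\}$ be the returned outer approximation. Since $\varphi(y,\cdot)$ is affine and $\varphi(y,y^*-\alpha e^q)=\varphi(y,y^*)+\alpha$, one checks that $\{y:\varphi(y,y^*)\ge 0\ \forall y^*\in\bar{\mathcal{D}}\}=\mathcal{P}^{out}$, i.e. $\mathcal{P}^{out}$ arises from $\bar{\mathcal{D}}$ via \eqref{Po}. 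The polyhedron $\bar{\mathcal{D}}$ is nonempty, closed, convex, satisfies $\bar{\mathcal{D}}-K=\bar{\mathcal{D}}$, and (as $\conv D^*(\bar{\mathcal{T}})$ is bounded) $\bar{\mathcal{D}}\ne\bar{\mathcal{D}}+K$, so Proposition~\ref{apprprop2} shows $\bar{\mathcal{D}}$ is exactly the dual \eqref{Di} of $\mathcal{P}^{out}$. The correspondence \eqref{Di}/\eqref{Po} reverses inclusions, so from $\mathcal{P}\subseteq\mathcal{P}^{out}\subseteq\mathcal{P}_\epsilon$, where $\mathcal{P}_\epsilon:=\conv\Gamma(\bar{\mathcal{X}})+C-\epsilon\{c\}$ (the first inclusion because each defining halfspace contains $\mathcal{P}$ by Proposition~\ref{primalprop4}, the second because $\mathcal{P}^{out}\subseteq\mathcal{P}_k\subseteq\mathcal{P}_\epsilon$ by the previous paragraph), I obtain $\mathcal{D}\supseteq\bar{\mathcal{D}}\supseteq\mathcal{D}_\epsilon$, where $\mathcal{D}$ is the dual of $\mathcal{P}$ by Proposition~\ref{strongdual} and $\mathcal{D}_\epsilon$ the dual of $\mathcal{P}_\epsilon$. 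Proposition~\ref{approximationremark}, applied to the finite weak $\epsilon$-solution $\bar{\mathcal{X}}$ established above, gives $\mathcal{D}_\epsilon+\epsilon\{e^q\}\supseteq\mathcal{D}$, whence $\bar{\mathcal{D}}+\epsilon\{e^q\}=\conv D^*(\bar{\mathcal{T}})-K+\epsilon\{e^q\}\supseteq\mathcal{D}$, which is \eqref{eqweakdualsoln}.

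I expect the main obstacle to be the middle step: correctly identifying the recession cone of the evolving polyhedron $\mathcal{P}_k$ as exactly $C$ and then cleanly promoting the vertex-wise inclusion to the whole polyhedron, together with the careful bookkeeping that at termination $\bar{\mathcal{X}}$ really contains an $x^v$ for every vertex of the final $\mathcal{P}_k$. Once the primal inclusion is in hand and $\bar{\mathcal{D}}$ is recognized as the dual of $\mathcal{P}^{out}$ via Proposition~\ref{apprprop2}, the dual statement is largely formal, resting only on inclusion-reversal and Proposition~\ref{approximationremark}.
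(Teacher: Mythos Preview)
Your proposal is correct and follows essentially the same route as the paper's proof: both establish the weak-minimizer and maximizer properties via Propositions~\ref{dualprop1}, \ref{primalprop2}, \ref{dualprop0}, \ref{primalprop3}; both show $\mathcal{P}\subseteq\mathcal{P}_{\hat k}\subseteq\conv\Gamma(\bar{\mathcal{X}})+C-\epsilon\{c\}=:\mathcal{P}_\epsilon$ from the termination condition $z^v\le\epsilon$ at every vertex; and both derive \eqref{eqweakdualsoln} by setting $\bar{\mathcal{D}}=\conv D^*(\bar{\mathcal{T}})-K$, applying Proposition~\ref{apprprop2} to recover $\bar{\mathcal{D}}$ from its $\varphi$-dual, using the chain of inclusions to get $\bar{\mathcal{D}}\supseteq\mathcal{D}_\epsilon$, and invoking Proposition~\ref{approximationremark}. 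Your treatment is in fact more explicit in two places the paper skips over: you spell out why the recession cone of $\mathcal{P}_{\hat k}$ equals $C$ (so that the vertex-wise inclusion lifts to the whole polyhedron), and you check the hypothesis $\bar{\mathcal{D}}\neq\bar{\mathcal{D}}+K$ of Proposition~\ref{apprprop2}.
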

\begin{proof}
By Assumption~\ref{assmp}, each problem (P$_1(z^j)$) in line 1 has an optimal solution $x^j$. By Propositions~\ref{dualprop1} and~\ref{dualprop0}, $x^j$ is a weak minimizer of \eqref{(P)} and $T^T z^j$ is a maximizer of \eqref{(D)}. Thus, the sets $\bar{\mathcal{X}}$ and $\bar{\mathcal{T}}$ are initialized by weak minimizers of \eqref{(P)} and maximizers of \eqref{(D)}, respectively. As noticed after \eqref{initialization}, $\mathcal{P}_0$ has at least one vertex. Thus, the set $\mathcal{P}^V$ in line 7 is nonempty. By Proposition \ref{primalprop1}, optimal solutions to \eqref{P2} and \eqref{D2} exist. By Proposition~\ref{primalprop2} a weak minimizer of (P) is added to $\bar{\mathcal{X}}$ in line~11. Proposition~\ref{primalprop3} ensures that a maximizer of \eqref{(D)} is added to $\bar{\mathcal{T}}$ in line~11. 
By Proposition~\ref{primalprop4}, we know that $M$ defined in lines~6 and~13 satisfies $M \supseteq \mathcal{P}$. This ensures that $\mathcal{P}_k \supseteq \mathcal{P}$ holds throughout the algorithm. By the same argument as used for $\mathcal{P}_0$, we know that $\mathcal{P}_k$ has at least one vertex. If the optional break in line~14 is in use, the inner loop (lines~8-16) is left if $z^v > \epsilon$, and the current outer approximation is updated by setting $\mathcal{P}_{k+1} = \mathcal{P}_k \cap M$. For the case without the optional break, see Remark~\ref{optional_break} below. The algorithm stops if $z^v \leq \epsilon$ for all the vertices $v$ of the current outer approximation $\mathcal{P}_k$. Let us assume this is the case after $\hat{k}$ iterations. We have 
\begin{align}\label{finer_app} 
	\mathcal{P}\subseteq \{y\in\R^q: \forall y^* \in D^*(\bar{\mathcal{T}}),\, \varphi(y,y^*)\geq 0\}  \subseteq \mathcal{P}_{\hat{k}}.
\end{align}
Indeed, the first inclusion follows from Proposition \ref{primalprop4} and the second inclusion follows from the construction of the set $\mathcal{P}_{\hat{k}}$, see \eqref{HJ}, \eqref{initialization} and lines 13 and 18. 

We next show that $\bar{\mathcal{X}}$ is a finite weak $\epsilon$-solution of \eqref{(P)}. We know that $\bar{\mathcal{X}}$ is finite and consists of weak minimizers only. We have to show \eqref{eqweaksoln}. We know that $\mathcal{P} \subseteq\mathcal{P}_{\hat{k}}$ and we have $\mathcal{P}_{\hat{k}} \subseteq \conv \Gamma(\bar{\mathcal{X}}) -\epsilon \{c\} + C =:\mathcal{P}_{\epsilon}$, since $z^v \leq \epsilon$ for each vertex $v$ of $\mathcal{P}_{\hat{k}}$.

Finally we show that $\bar{\mathcal{T}}$ is a finite $\epsilon$-solution of \eqref{(D)}. Clearly, $\bar{\mathcal{T}}$ is nonempty and finite and, as shown above, it consists of maximizers of \eqref{(D)} only. It remains to show \eqref{eqweakdualsoln}. Setting 
$\bar{\mathcal{D}}:=\conv D^*(\bar{\mathcal{T}})-K$, we have
$$\bar{\mathcal{P}}:=\{y\in\R^q: \forall y^*\in \bar{\mathcal{D}},\, \varphi(y,y^*)\geq 0\} \subseteq\{y\in\R^q: \forall y^* \in D^*(\bar{\mathcal{T} }),\, \varphi(y,y^*)\geq 0\}\subseteq \mathcal{P}_{\hat{k}} \subseteq \mathcal{P}_{\epsilon}.$$
Using Proposition~\ref{apprprop2}, we conclude 
$$\bar{\mathcal{D}} =  \{y^*\in \R^q: \forall y \in \bar{\mathcal{P}},\, \varphi(y,y^*) \geq 0\} \supseteq \{y^*\in \R^q: \forall y \in \mathcal{P}_{\epsilon},\, \varphi(y,y^*) \geq 0\} =: \mathcal{D}_{\epsilon}.$$
By Proposition~\ref{approximationremark}, ${\mathcal{D}}_\epsilon + \epsilon \{e^q\} \supseteq \mathcal{D}$. Altogether we have $\conv D^*(\bar{\mathcal{T}})-K + \epsilon \{e^q\} \supseteq \mathcal{D}$.
\end{proof}

Note that in general, $\bar{\mathcal{T}}$ produces a finer outer approximation of the upper image than $\mathcal{P}_{\hat{k}}$, see \eqref{finer_app}. The reason is that, in contrast to $\bar{\mathcal{T}}$, $M$ is not necessarily updated in each iteration.

\begin{remark}\label{optional_break}
The `break' in line 14 of Algorithm~\ref{alg_1} is optional. The algorithm with the break updates the outer approximation right after it detects a vertex $v$ with $z^v>\epsilon$. The algorithm without the break goes over all the vertices of the current outer approximation before updating it. In general, one expects a larger number of vertex enumerations for the first variant (with break), and more optimization problems to solve for the second one (without break). 
\end{remark}

\begin{remark}[Alternative Algorithm~\ref{alg_1}] \label{alternative}
We will now discuss a modification of Algorithm~\ref{alg_1}. It produces $\epsilon$-approximations of $\mathcal{P}$ and $\mathcal{D}$ as well, but with fewer vertices. Thus, the approximations are coarser. This alternative consists of three modifications. 

First, set $\bar{\mathcal{X}} =\emptyset$ in line~3 of Algorithm~\ref{alg_1}. Second, replace lines~11-15 of Algorithm~\ref{alg_1} with the alternative lines given below.
Then, $x^v$ is only added to the set $\bar{\mathcal{X}}$ if there is no cut, while the dual counterpart $T^T w^v$ is only added to $\bar{\mathcal{T}}$ if there is a cut.
Third, line~21 of Algorithm~\ref{alg_1} can be skipped and the vertices $\mathcal{P}^V$ of $\mathcal{P}_{\hat{k}}$ can be returned as the final outer $\epsilon$-approximation of $\mathcal P$. Note that the first two modifications imply $\mathcal{P}_{\hat{k}}=\{y\in\R^q: \varphi(y,y^*)\geq 0, \forall y^* \in D^*(\bar{\mathcal{T} } ) \}$, which makes line~21 superfluous.
 
Under these three modifications, one still finds a finite weak $\epsilon$-solution $\bar{\mathcal{X}}$ to \eqref{(P)}, and a finite $\epsilon$-solution $\bar{\mathcal{T}}$ to \eqref{(D)}; but, in general, $\bar{\mathcal{X}}$ and $\bar{\mathcal{T}}$ have less elements compared to the original version of Algorithm~\ref{alg_1}, so the approximation is coarser. This  variant is used in \cite{ehrgott}. We propose Algorithm~\ref{alg_1} as it yields a finer approximation at no additional cost.
\end{remark}
\begin{algorithm}
\caption*{Alternative to lines 11-15 of Algorithm~\ref{alg_1}}
\begin{algorithmic}
\IF{$z^v \leq \epsilon$}
	\STATE $\bar{\mathcal{X}} \gets \bar{\mathcal{X}} \cup \{x^v\}$;
\ELSE
	\STATE $\bar{\mathcal{T}}\gets \bar{\mathcal{T}} \cup \{T^T w^v\}$; Update $D$ using \eqref{eqdualprimal} such that $D=D^*(\bar{\mathcal{T}})$;
	\STATE $M \gets M \cap \{ y \in \R^q : \varphi(y,D^*(T^T w^v))\geq 0 \}$;
	\STATE break; (optional) 
\ENDIF
\end{algorithmic}
\end{algorithm}

\subsection{Dual Algorithm}

A dual variant of Benson's algorithm for LVOPs based on geometric duality \cite{geometric_lp} has been introduced in \cite{ehr_dual}. An extension which approximately solves dual LVOPs was established in \cite{dualalgorithm}. The main idea is to construct approximating polyhedra of the lower image $\mathcal{D}$ of the geometric dual problem \eqref{(D)}  analogous to Algorithm~\ref{alg_1}. Geometric duality is used to recover approximations of the upper image $\mathcal{P}$ of the primal problem \eqref{(P)}. 

We employ the same idea in order to construct a dual variant of an approximation algorithm for CVOPs. The algorithm starts with an initial outer approximation $\mathcal{D}_0$ of $\mathcal{D}$ and computes iteratively a sequence 
$\mathcal{D}_0 \supseteq \mathcal{D}_1 \supseteq \mathcal{D}_2 \supseteq ... \supseteq \mathcal{D}$ of smaller outer approximations. As in the primal algorithm, the vertices of $\mathcal{D}_k$ are found using vertex enumeration. Each vertex $t$ is added to the set $\bar{\mathcal{T}}$, which will be shown to be a finite $\epsilon$-solution to \eqref{(D)}. Then, we check the `distance' between $t$ and the boundary of the lower image. If it is greater than $\epsilon$, a point $\hat{t} \in \bd \mathcal{D}$, and a supporting hyperplane to $\mathcal{D}$ at $\hat{t}$ are determined. The approximation for the next iteration $\mathcal{D}_{k+1}$ is updated as the intersection of  $\mathcal{D}_k$ and the corresponding halfspace containing the lower image. The algorithm continues in the same manner until all vertices of the current approximation are in `$\epsilon$ distance' to the lower image. After this, it returns a finite weak $\epsilon$-solution $\bar{\mathcal{X}}$ of (P), a finite $\epsilon$-solution $\bar{\mathcal{T}}$ of (D), as well as outer and inner approximations to the upper image $\mathcal{P}$ (and to the lower image $\mathcal{D}$, by duality).

The feasible region $\mathcal{T}= \{t\in\R^q : w(t) \in C^+\}$ of \eqref{(D)} obviously provides an outer approximation of $\mathcal{D}$. All supporting hyperplanes of $\mathcal{T}$ are {\em vertical}, that is, they have a normal vector $t^*\in\R^q\smz$ with $t^*_q=0$. 

Recall that Assumption~\ref{assmp} was assumed to hold in this section, which leads to the existence of optimal solutions of \eqref{P1} and \eqref{D1} for every $w\in \R^q$. Both convex programs play an important role in the following. 

\begin{proposition}\label{dualprop2}
Let $t \in \mathcal{T}$, $w := w(t)$, and $y^w \in \R$ be the optimal objective value for \eqref{P1}. Then,
$$ t \notin \mathcal{D} \Longleftrightarrow t_q > y_w \qquad \text{and}\qquad t \in \MaxK (\mathcal{D}) \Longleftrightarrow t_q = y_w.$$
\end{proposition}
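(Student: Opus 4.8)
The plan is to read off both equivalences directly from the explicit description of the lower image
$$\mathcal{D} = \left\{ s\in\R^q : w(s) \in C^+,\ s_q \leq \inf_{x\in\mathcal{X}}[w(s)^T\Gamma(x)]\right\},$$
using two elementary observations. First, since $t\in\mathcal{T}$ we already have $w(t)\in C^+$, so membership $t\in\mathcal{D}$ reduces to the single scalar inequality $t_q \le \inf_{x\in\mathcal{X}} w(t)^T\Gamma(x)$. Because $w=w(t)$, the right-hand side is exactly the optimal value of \eqref{P1}, which is finite and attained under Assumption~\ref{assmp} (as $\mathcal{X}$ is compact and $\Gamma$ continuous); call it $y^w$. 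Hence $t\in\mathcal{D}\iff t_q\le y^w$, and negating gives the first equivalence $t\notin\mathcal{D}\iff t_q> y^w$.

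Second, I would exploit that $w(\cdot)$ does not depend on the last coordinate: from $w(s)=((s_1,\dots,s_{q-1},1)T^{-1})^T$ one reads off $w(t+\sigma e^q)=w(t)$ for every $\sigma\in\R$. Consequently $t+\sigma e^q\in\mathcal{T}$ for all $\sigma$, and by the first observation $t+\sigma e^q\in\mathcal{D}\iff t_q+\sigma\le y^w$. This reduces the $K$-maximality condition — where $K=\R_+e^q$ and $t\in\MaxK(\mathcal{D})$ means $t\in\mathcal{D}$ together with $(\{t\}+K\setminus\{0\})\cap\mathcal{D}=\emptyset$ — to a one-dimensional statement about $t_q$. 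If $t_q=y^w$, then $t\in\mathcal{D}$ and for every $\sigma>0$ one has $t_q+\sigma>y^w$, so $t+\sigma e^q\notin\mathcal{D}$, giving $t\in\MaxK(\mathcal{D})$. Conversely, if $t\in\MaxK(\mathcal{D})$ then $t_q\le y^w$; were $t_q<y^w$, choosing $\sigma:=y^w-t_q>0$ would yield $t+\sigma e^q\in\mathcal{D}$, contradicting maximality, so $t_q=y^w$.

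This proposition is essentially bookkeeping, and I do not anticipate a genuine obstacle. The only points requiring a little care are that $y^w$ is finite and attained — so that the inequalities are between real numbers and the choice $\sigma=y^w-t_q$ is legitimate — which is guaranteed by Assumption~\ref{assmp}, and that $w(t)$ is truly independent of $t_q$, which is precisely what lets the $K$-direction $e^q$ probe the boundary of $\mathcal{D}$ at the level $t_q=y^w$. Both equivalences then follow as a direct translation of the defining inequality of $\mathcal{D}$.
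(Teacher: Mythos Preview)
Your argument is correct and is exactly the unpacking the paper has in mind: the paper's own proof is the single sentence ``This follows directly from the definition of $\mathcal{D}$,'' and your two observations (membership in $\mathcal{D}$ reduces to $t_q\le y^w$ once $t\in\mathcal{T}$, and $w(\cdot)$ is independent of the last coordinate so the $K$-direction probes only $t_q$) are precisely what makes that sentence true.
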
 
\begin{proof}
This follows directly from the definition of $\mathcal{D}$.
\end{proof}

\begin{proposition}\label{dualprop3}
Let $t \in \mathcal{T}$, $w := w(t)$, and $x^w$ be an optimal solution to problem \eqref{P1}. Then $\mathcal{H}^*:=H^*(\Gamma(x^w))$ is a non-vertical supporting hyperplane to $\mathcal{D}$ at $D^*(t)$.
\end{proposition}
\begin{proof}
By Proposition~\ref{dualprop0}, $D^*(t)\in \bd\mathcal{D}$. We have $D^*(t) \in \mathcal{H}^*$, as $w(D^*(t)) = w(t) = w$ and $(D^*(t))_q = w^T\Gamma(x^w)$ imply $\varphi(\Gamma(x^w),D^*(t)) = w(D^*(t))^T\Gamma(x^w)-(D^*(t))_q = 0$.
Since $y^* \mapsto w(y^*)$ is an affine function but does not depend on $y^*_q$, $\mathcal{H}^*$ is a non-vertical hyperplane.
For any $d\in \mathcal{D}$, we have 
$\varphi(\Gamma(x^w),d)=w(d)^T\Gamma(x^w)-d_q\geq \inf_{x\in \mathcal{X}}\left[w(d)^T\Gamma(x)\right] - d_q\geq 0$,
where the last inequality follows from the definition of $\mathcal{D}$.
\end{proof}

The initial outer approximation $\mathcal{D}_0$ is obtained by solving (P$_1(\eta)$) for 
\begin{equation}\label{eta}
	\eta= \frac{1}{J} \sum_{j=1}^J z^j \in \Int C^+,
\end{equation}
where $z^j, j=1,\ldots,J$ are the generating vectors of $C^+$ such that $(z^j)^Tc=1$. 
By Proposition~\ref{dualprop1}, an optimal solution $x^{\eta}$ of (P$_1(\eta)$) is a weak minimizer of \eqref{(P)}.
We have $\eta^Tc=1$ and $w(T^T \eta) = \eta$ by \eqref{eq_wt}. 
By Proposition \ref{dualprop0}, $t:=T^T \eta$ is a maximizer for \eqref{(D)} and $D^*(T^T \eta) = (t_1,\ldots,t_{q-1},y^\eta)$, where $y^{\eta} \in \R$ denotes the optimal value of (P$_1(\eta)$). 
By Proposition~\ref{dualprop3}, $H^*(\Gamma(x^{\eta}))$ is a non-vertical supporting hyperplane to $\mathcal{D}$.
The initial outer approximation is 
\begin{equation}\label{D0}
	\mathcal{D}_0 := \mathcal{T} \cap \{ y^* \in \R^q : \varphi(\Gamma(x^{\eta}),y^*)\geq 0\}\supseteq \mathcal{D}.
\end{equation}
As $\mathcal{D}_0$ contains no lines, it has at least one vertex. We now state the dual algorithm. 

\begin{algorithm}
\caption{Approximation Algorithm: A Dual Variant}
\begin{algorithmic}[1]\label{alg_2}
\STATE Compute an optimal solution $x^\eta$ to (P$_1(\eta)$) for $\eta$ in \eqref{eta};
\STATE Store an H-representation $\mathcal{D}^H$ of  $\mathcal{D}_0$ according to \eqref{D0}; 	
\STATE $k \gets 0$; $\bar{\mathcal{X}} \gets \{x^{\eta}\}$; $\bar{\mathcal{T}} \gets \{T^T \eta\}$; $D \gets D^*(\bar{\mathcal{T}})$, where $D^*_q(T^T\eta) = \eta^T \Gamma(x^{\eta})$;
\REPEAT
	\STATE $M \gets \R^q$;
	\STATE Compute the set $\mathcal{D}^V$ of vertices of $\mathcal{D}_k$ from its H-representation $\mathcal{D}^H$;
		\FOR{$i=1:\abs{\mathcal{D}^V}$}
			\STATE Let $t$ be the $i^{th}$ element of $\mathcal{D}^V$ (i.e. the $i^{th}$ vertex of $\mathcal{D}_k$) and set $w \gets w(t)$;
			\STATE Compute an optimal solution $x^w$ to \eqref{P1} and the optimal value $y^w = w^T\Gamma(x^w)$;		
			\STATE $\bar{\mathcal{X}} \gets \bar{\mathcal{X}} \cup \{x^w\}$;
			\IF{($w \notin \bd{C^+}$ or  $t_q - y^w \leq \epsilon$)}
				\STATE $\bar{\mathcal{T}} \gets \bar{\mathcal{T}} \cup \{t\}$; Update $D$ using $D^*(t)=(t_1,\dots,t_{q-1},y^w)^T$ such that $D=D^*(\bar{\mathcal{T}})$;
			\ENDIF
				\IF{$t_q - y^w > \epsilon$}
					\STATE $M \gets M  \cap \{y^*\in \R^q: \varphi(\Gamma(x^w),y^*)\geq 0\}$; 
					\STATE break; (optional)
				\ENDIF
		\ENDFOR
	\IF{$M\neq\R^q$}
		\STATE Store in $\mathcal{D}^H$ an H-representation of $\mathcal{D}_{k+1} = \mathcal{D}_k \cap M$ and set $k \gets k+1$;
	\ENDIF
\UNTIL  $M=\R^q$
\STATE Compute the vertices $\mathcal{V}$ of $\{y\in\R^q: \varphi(y,y^*)\geq 0, \forall y^* \in D=D^*(\bar{\mathcal{T} } )\}$;
\RETURN $\left\{ \begin{array}{ll}
\bar{\mathcal{X}}&: \text{A finite weak~}\epsilon \text{-solution to~\eqref{(P)};}\\
\bar{\mathcal{T}}&: \text{A finite~}\epsilon \text{-solution to~\eqref{(D)};}\\
\mathcal{V} &:      \text{Vertices of an outer }\epsilon\text{-approximation of }\mathcal{P};\\
\Gamma(\bar{\mathcal{X}}) &: \text{Vertices of an inner }\epsilon\text{-approximation of }\mathcal{P}.\\
\end{array} \right.$
\end{algorithmic}
\end{algorithm}

\begin{theorem}\label{thmdual}
Let Assumption~\ref{assmp} be satisfied. Then, Algorithm~\ref{alg_2} works correctly: If the algorithm terminates, it returns a finite weak $\epsilon$-solution $\bar{\mathcal{X}}$ to problem \eqref{(P)}, and a finite $\epsilon$-solution $\bar{\mathcal{T}}$ to the dual problem \eqref{(D)}.
\end{theorem}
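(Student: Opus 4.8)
The plan is to mirror the structure of the proof of Theorem~\ref{thmprimal}, adapting each step to the dual setting where we build outer approximations $\mathcal{D}_0 \supseteq \mathcal{D}_1 \supseteq \cdots \supseteq \mathcal{D}$ of the lower image rather than of the upper image. First I would verify that the sets $\bar{\mathcal{X}}$ and $\bar{\mathcal{T}}$ are correctly initialized: by Assumption~\ref{assmp} problem (P$_1(\eta)$) has an optimal solution $x^\eta$ for the $\eta$ defined in \eqref{eta}, and since $\eta \in \Int C^+ \subseteq C^+\setminus\{0\}$, Proposition~\ref{dualprop1} guarantees $x^\eta$ is a weak minimizer of \eqref{(P)} while Proposition~\ref{dualprop0} guarantees $T^T\eta$ is a maximizer of \eqref{(D)}. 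Thus both sets start out populated by the right kind of points, exactly as in the discussion preceding \eqref{D0}.

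Next I would establish the loop invariant $\mathcal{D}_k \supseteq \mathcal{D}$ and that every element added to $\bar{\mathcal{X}}$ and $\bar{\mathcal{T}}$ is a weak minimizer, respectively a maximizer. For each vertex $t$ of $\mathcal{D}_k$ we set $w := w(t)$ and solve \eqref{P1}; Proposition~\ref{dualprop1} again yields that $x^w$ is a weak minimizer (handling line~10), and when $t$ is added to $\bar{\mathcal{T}}$ in line~12 one must check it is a maximizer. Here the guard in line~11 matters: if $t_q - y^w \le \epsilon$ and $w\in\bd C^+$, or whenever $w\notin\bd C^+$, the point is added, and Proposition~\ref{dualprop0} applies because $t\in\mathcal{T}$ means $w(t)\in C^+\setminus\{0\}$ and \eqref{P1} has a finite optimum. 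The cut added to $M$ in line~15 is the halfspace bounded by the supporting hyperplane $H^*(\Gamma(x^w))$; Proposition~\ref{dualprop3} shows this hyperplane supports $\mathcal{D}$ and, since $\varphi(\Gamma(x^w),d)\ge 0$ for all $d\in\mathcal{D}$, the corresponding halfspace contains $\mathcal{D}$, preserving $\mathcal{D}_{k+1}=\mathcal{D}_k\cap M\supseteq\mathcal{D}$.

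For termination at iteration $\hat k$ we have $t_q - y^w \le \epsilon$ for every vertex $t$ of $\mathcal{D}_{\hat k}$. Using Proposition~\ref{dualprop2}, $t_q - y^w$ is precisely the vertical distance from $t$ to $\bd\mathcal{D}$, so the stopping criterion says every vertex lies within $\epsilon\{e^q\}$ of $\mathcal{D}$, giving $\mathcal{D}_{\hat k} \subseteq \mathcal{D} + \epsilon\{e^q\}$ and hence the outer/inner sandwich for the lower image. To obtain the claimed finite $\epsilon$-supremizer property \eqref{eqweakdualsoln}, I would argue $\conv D^*(\bar{\mathcal{T}}) - K \supseteq \{t^* : \varphi(y,t^*)\ge 0\ \forall y \in \mathcal{D}_{\hat k}\text{-dual image}\}$ using the geometric duality relations of Propositions~\ref{apprprop1}, \ref{apprprop2} and~\ref{strongdual}, exactly paralleling the final paragraph of the proof of Theorem~\ref{thmprimal} but with the roles of $\mathcal{P}$ and $\mathcal{D}$ interchanged. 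The primal claim that $\bar{\mathcal{X}}$ is a finite weak $\epsilon$-solution then follows by transporting the $\epsilon$-outer approximation of $\mathcal{D}$ through the duality map to an $\epsilon$-outer approximation of $\mathcal{P}$ via Proposition~\ref{approximationremark}.

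The main obstacle I anticipate is the bookkeeping around the guard in line~11, which has no exact analogue in Algorithm~\ref{alg_1}. One must confirm that the points actually collected in $\bar{\mathcal{T}}$ still generate (up to $\epsilon$ in the $e^q$ direction) the whole lower image: vertices with $w\in\bd C^+$ and $t_q - y^w>\epsilon$ are deliberately \emph{not} added, and I would need to show these omissions are harmless, i.e.\ that such vertices correspond to the vertical (boundary) portions of $\mathcal{T}$ whose associated scalarizations do not contribute genuine supporting data to $\mathcal{D}$. Relatedly, care is needed to show the returned set $\mathcal{V}$ in line~23 really gives the vertices of a valid outer $\epsilon$-approximation of $\mathcal{P}$, which rests on the identity $\mathcal{P}_{\hat k} = \{y : \varphi(y,y^*)\ge 0\ \forall y^*\in D^*(\bar{\mathcal{T}})\}$ together with Proposition~\ref{approximationremark}; verifying this hinges precisely on having added the correct maximizers to $\bar{\mathcal{T}}$ despite the line~11 filter.
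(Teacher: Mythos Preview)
Your overall plan mirrors the paper's proof and would succeed, but you have misidentified the difficulty around line~11 and, as a consequence, over-complicated the argument that $\bar{\mathcal{T}}$ is a finite $\epsilon$-supremizer.

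The line~11 guard is \emph{not} an obstacle at termination. The algorithm terminates precisely when $M=\R^q$, i.e.\ when the condition in line~14 fails for \emph{every} vertex $t$ of $\mathcal{D}_{\hat k}$, meaning $t_q-y^w\le\epsilon$ for all of them. Hence the disjunction in line~11 is satisfied at every vertex of the terminal polytope, and every vertex of $\mathcal{D}_{\hat k}$ is added to $\bar{\mathcal{T}}$. The filter is active only during intermediate iterations, and its sole purpose (as the paper notes) is to avoid adding redundant maximizers yielding the same $D^*(t)$; it never affects the final set. So your proposed analysis of ``harmless omissions'' along the vertical faces of $\mathcal{T}$ is unnecessary.

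Once you see this, the $\epsilon$-supremizer property follows \emph{directly}, not via the duality Propositions~\ref{apprprop1}--\ref{strongdual} as you propose: since every vertex $t$ of $\mathcal{D}_{\hat k}$ lies in $\bar{\mathcal{T}}$ and satisfies $t-D^*(t)=(t_q-y^{w(t)})e^q$ with $t_q-y^{w(t)}\le\epsilon$, one gets immediately $\mathcal{D}\subseteq\mathcal{D}_{\hat k}\subseteq\conv D^*(\bar{\mathcal{T}})-K+\epsilon\{e^q\}$. This parallels the \emph{direct} argument for $\bar{\mathcal{X}}$ in Theorem~\ref{thmprimal}, not the duality-based one. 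The duality machinery (Propositions~\ref{apprprop1} and~\ref{approximationremark}) is then used, exactly as you say, for the other half: transporting the outer approximation of $\mathcal{D}$ to show $\bar{\mathcal{X}}$ is a finite weak $\epsilon$-infimizer of \eqref{(P)}. So the roles are indeed swapped relative to Theorem~\ref{thmprimal}, but in the sense that the direct argument applies to $\bar{\mathcal{T}}$ and the duality argument to $\bar{\mathcal{X}}$, not the other way around.
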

\begin{proof} By Assumption~\ref{assmp}, \eqref{P1} has a solution $x^w$ for every $w \in \R^q$. Note that we have $\mathcal{D}_k \subseteq\mathcal{D}_{k-1} \subseteq\mathcal{D}_0\subseteq\mathcal{T}$ by construction. This ensures that $\eta$ in line 1 and $w$ in line 8 belong to $C^+\smz$. Hence, by Proposition \ref{dualprop1}, $\bar{\mathcal{X}}$ consists of a weak minimizers of \eqref{(P)} only. We know that $\mathcal{D}_k$, $k=0,1,2,...$ contains no lines and, therefore, it has at least one vertex. Every vertex $t$ of $\mathcal{D}_k$ in line 8 belongs to $D^*(\mathcal{T})$. By Proposition~\ref{dualprop0}, $t$ is a maximizer to \eqref{(D)} with $D^*(t)=(t_1,\ldots,t_{q-1},y^w)$. Proposition~\ref{dualprop3} yields that $H^*(\Gamma(x^w))$ is a supporting hyperplane of $\mathcal{D}$ at $D^*(t)$. Hence, we have $\mathcal{D}_k \supseteq \mathcal{D}$ for all $k$.
The condition in line 11 just excludes some of the $t$'s to prevent that multiple elements are added to $\mathcal{T}$ which yield the same objective value $D^*(t)$. 

Assume the algorithm stops after $\hat{k}$ iterations. The vertices $t$ of the outer approximation $\mathcal{D}_{\hat{k}}$ of $\mathcal{D}$ satisfy $t_q -y^w \leq \epsilon$, where $y^w$ is the optimal objective value of \eqref{P1} for $w=w(t)$. 

We next show that $\bar{\mathcal{T}}$ is a finite $\epsilon$-solution to \eqref{(D)}. We know that $\mathcal{T}$ is nonempty and consists of maximizers for \eqref{(D)} only. It remains to show that $\bar{\mathcal{T}}$ is a finite $\epsilon$-supremizer, i.e., \eqref{eqweakdualsoln} holds. As shown above, we have $\mathcal{D}\subseteq \mathcal{D}_{\hat{k}}$. By construction, every vertex of $\mathcal{D}_{\hat{k}}$ belongs to $\bar{\mathcal{T}}$ and we have $t_q-y^{w(t)} \leq \epsilon$. By $D^*(t)=(t_1,\dots,t_{q-1},y^{w(t)})^T$, we obtain $\mathcal{D}_{\hat{k}} \subseteq \conv D^*(\bar{\mathcal{T}}) +\epsilon \{e^q\} - K =: \mathcal{D}_{\epsilon}$.

Finally, we prove that $\bar{\mathcal{X}}$ is a finite weak $\epsilon$-solution to \eqref{(P)}. We already know that $\bar{\mathcal{X}}$ is nonempty and finite, and it consists of weak minimizers for \eqref{(P)} only. It remains to show that $\bar{\mathcal{X}}$ is a finite $\epsilon$-infimizer for \eqref{(P)}, i.e., \eqref{eqweaksoln} holds. Setting $\bar{\mathcal{P}}:=\conv \Gamma(\bar{\mathcal{X}})+C$, we have
$$\bar{\mathcal{D}}:=\{y^*\in\R^q: \forall y\in \bar{\mathcal{P}},\, \varphi(y,y^*)\geq 0\} \subseteq\{y^*\in\R^q: \forall y \in \Gamma(\bar{\mathcal{X} }),\, \varphi(y,y^*)\geq 0\}\subseteq \mathcal{D}_{\hat{k}} \subseteq \mathcal{D}_{\epsilon}.$$
Using Proposition~\ref{apprprop1}, we conclude 
$$\bar{\mathcal{P}} =  \{y\in \R^q: \forall y^* \in \bar{\mathcal{D}},\, \varphi(y,y^*) \geq 0\} \supseteq \{y\in \R^q: \forall y^* \in \mathcal{D}_{\epsilon},\, \varphi(y,y^*) \geq 0\} =: \mathcal{P}_{\epsilon}.$$
By Proposition~\ref{approximationremark}, ${\mathcal{P}}_\epsilon + \epsilon \{c\} \supseteq \mathcal{P}$. Altogether we have $\conv \Gamma(\bar{\mathcal{X}})+C + \epsilon \{c\} \supseteq \mathcal{P}$.
\end{proof}

\begin{remark}[Alternative Algorithm~\ref{alg_2}]\label{alternativedual}
Using similar arguments as in Remark~\ref{alternative} for Algorithm~\ref{alg_1}, we obtain an alternative variant of Algorithm~\ref{alg_2}. It is possible to replace lines~10-17 of Algorithm~\ref{alg_2} by the alternative lines given below. In addition, one can initialize $\bar{\mathcal{T}}$ as the empty set in line~3. Line 23 can be skipped as the vertices of a coarser outer $\epsilon$-approximation of $\mathcal{P}$ are also given by $\mathcal{D}^V$. 
\end{remark}
\begin{algorithm}
\caption*{Alternative to Lines 10-17 of Algorithm~\ref{alg_2}}
\begin{algorithmic}
\IF{$t_q - y^w \leq \epsilon$}
	\STATE $\bar{\mathcal{T}} \gets \bar{\mathcal{T}} \cup \{t\}$;
	\STATE Update $D$ using $D^*(t)=(t_1,\dots,t_{q-1},y^w)^T$ such that $D=D^*(\bar{\mathcal{T}})$;
\ELSE
	\STATE $\bar{\mathcal{X}} \gets \bar{\mathcal{X}} \cup \{x^w\}$;
	\STATE $M \gets M  \cap \{y^*\in \R^q: \varphi(\Gamma(x^w),y^*)\geq 0\}$; 
	\STATE break; (optional) 
\ENDIF
\end{algorithmic}
\end{algorithm}

\subsection{Remarks}
\label{rems}
1. Algorithms~\ref{alg_1} and \ref{alg_2} provide finite $\epsilon$-solutions to \eqref{(D)}, but only a finite `weak' $\epsilon$-solution to \eqref{(P)}. The reason can be found in Propositions~\ref{dualprop1}, and~\ref{primalprop2}. Recall that for LVOPs in case of $\epsilon=0$ the situation is different: One can easily find a finite solution to \eqref{(P)}. Since $\mathcal{P}$ is polyhedral, the vertices that generate $\mathcal{P}$ are considered, and any vertex of $\mathcal{P}$ is $C$-minimal, see \cite{lvop}.

2. Algorithms~\ref{alg_1} and \ref{alg_2} return vertices of inner and outer approximations of $\mathcal{P}$. However,
both inner and outer approximations of $\mathcal P$ and $\mathcal D$ can be obtained from the  $\epsilon$-solution concept: If $\bar{\mathcal{X}}$ is a finite weak $\epsilon$-solution to \eqref{(P)} and $Y$ is the matrix of generating vectors of $C$, then $\conv \Gamma(\bar{\mathcal{X}}) + \cone \{ y \in \R^q: y \text{ column of } Y\}$ is a V-representation of an inner approximation of $\mathcal P$ and $\{y^*\in\R^q : \forall y \in \Gamma(\bar{\mathcal{X}}),\, \varphi(y,y^*)\geq 0,\;  Y^T w(y^*) \geq 0\}$ is an H-representation of an outer approximation of $\mathcal D$. If $\bar{\mathcal{T}}$ is a finite $\epsilon$-solution to \eqref{(D)}, then $\conv D^*(\bar{\mathcal{T}}) + \cone\{-e^q\}$ is a V-representation of an inner approximation of $\mathcal{D}$, and $\{y\in\R^q: \forall y^* \in D^*(\bar{\mathcal{T}}),\, \varphi(y,y^*) \geq 0)\}$ is an H-representation of an outer approximation of $\mathcal P$.

3. All algorithms in this paper still work correctly if Assumption~\ref{assmp} (a) is replaced by the requirement that optimal solutions of \eqref{P1} and \eqref{P2} exist for those parameter vectors $w$ and $v$ that occur during an algorithm is applied to a problem instance. This follows from the fact that compactness was only used to prove existence of the scalar problems. Note that \eqref{(P)} being bounded was not explicitly used in any proof. However, \eqref{(P)} being bounded is equivalent to \eqref{P1} being bounded for every $w\in C^+$, which is necessary but not sufficient for the existence of a solution to \eqref{P1}. Problems with non-compact feasible set are solved in Examples~\ref{ex3} and~\ref{caginex1} below. 

4. This last remark concerns finiteness of the algorithms presented here. Even though there exists a finite $\epsilon$-solution to \eqref{(P)} for any error level $\epsilon >0$ by Proposition~\ref{existence_soln}, and a finite weak $\epsilon$-solution to \eqref{(P)} is found if the algorithm terminates, it is still an open problem to show that the algorithms are finite. If the optional break command in Algorithm~\ref{alg_1} is disabled, every vertex of the current approximation of $\mathcal{P}$ is checked before updating the next approximation. Thus, $\epsilon_k := \max\{z^v: v \text{ vertex of } \mathcal{P}_k\} \geq 0$ decreases in each iteration of Algorithm~\ref{alg_1}. If one stops the algorithms at the $\hat{k}^{th}$ iteration, a finite weak $\epsilon_{\hat{k}}$- solution to \eqref{(P)} is returned. 
In order to show that the algorithm is finite, one would need to show that $\lim_{k \rightarrow \infty} \epsilon_{k} = 0$. The situation in Algorithm~\ref{alg_2} is similar.
 
\section{Examples and numerical results}
\label{secex}

We provide four examples in this section. The first one illustrates how the algorithms work. The second example is \cite[Example 6.2]{ehrgott}, an example with non-differentiable constraint function. This problem can be solved by the algorithms in this paper, where we use a solver which is able to solve some special non-differential scalar problems. The third example has three objectives and is related to generalization (iii) in the introduction. The last example has a four dimensional outcome space and shows that the algorithms provided here can be used for the calculation of set-valued convex risk measures. The corresponding vector optimization problems naturally have ordering cones being strictly larger and having more generating vectors than $\R^q_+$, which was one motivation to extend the algorithms to arbitrary solid convex polyhedral ordering cones.

We provide some computational data for each example with a corresponding table. The second column of the table shows the variant of the algorithm, where `break/no break' corresponds to the optional breaks in line 14 of Algorithm~\ref{alg_1}, and line 16 of Algorithm~\ref{alg_2}. The next two columns show the number of scalar optimization problems ($\#$ opt.) solved, and the number of vertex enumerations ($\#$ vert. enum.) used during the algorithms. We provide the number of elements in the solution sets $\abs{\bar{\mathcal{X}}}$, $\abs{\bar{\mathcal{T}}}$ found by Algorithms~\ref{alg_1} and~\ref{alg_2}, as well as the number of the elements of the solution sets $\abs{\bar{\mathcal{X}}_{alt}}$, $\abs{\bar{\mathcal{T}}_{alt}}$ found by the alternative versions given by Remarks~\ref{alternative} and~\ref{alternativedual}. CPU time is measured in seconds. We used MATLAB to implement the algorithms, and we employ CVX, a package for specifying and solving convex programs, as a solver (\cite{cvx, boyd}). We ignore the approximation error of the solver, because it is typically much smaller than the $\epsilon$ we fix in the algorithms.

\begin{example}
\label{ex1}
Consider the following problem
\begin{align*}
&\text{minimize~~} \Gamma(x) = (x_1,x_2)^T \text{~~with respect to ~}\leq_{\R^2_+}\\
&\text{subject to~~} (x_1-1)^2 + (x_2-1)^2 \leq 1,\;\; x_1, x_2 \geq 0.
\end{align*}
We set $c^1 = [0,1]^T, c = [1,1]^T$. The corresponding upper image $\mathcal{P}$ and the lower image $\mathcal{D}$ can be seen in Figure~\ref{figure0}.

\begin{figure}[ht]
\centering
\includegraphics[width=15cm,height=5cm]{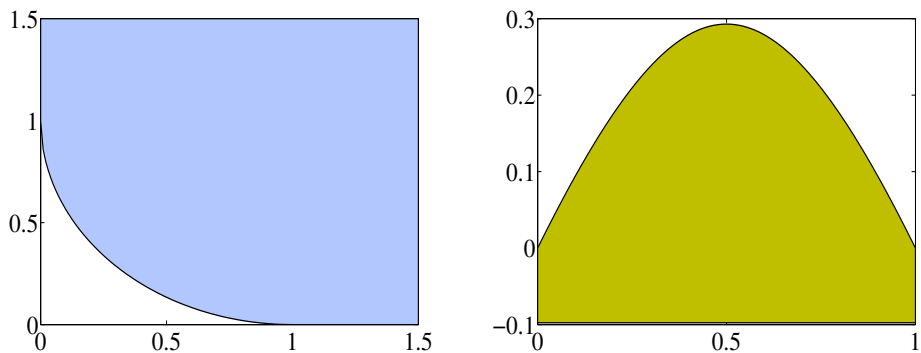}
\caption{Upper (left) and lower (right) images for Example~\ref{ex1}.}
\label{figure0}
\end{figure}

Algorithm~\ref{alg_1} starts with $\mathcal{P}_0 = C=\R^2_+$. We set the approximation error $\epsilon = 0.05$ and use the optional break in line 14. Figure~\ref{figure1} shows the inner and outer approximations for the upper image after the first three iterations $k=1,2,3$. Remember that the current outer approximation is $\mathcal{P}_k$. The current inner approximation is $\conv \Gamma(\bar{\mathcal{X}}_k)+C$, where $\bar{\mathcal{X}}_k$ denotes the `solution' set $\bar{\mathcal{X}}$ at the end of iteration step $k$.

\begin{figure}[ht]
\hspace{-1.5cm}\includegraphics[width=18cm,height=4.5cm]{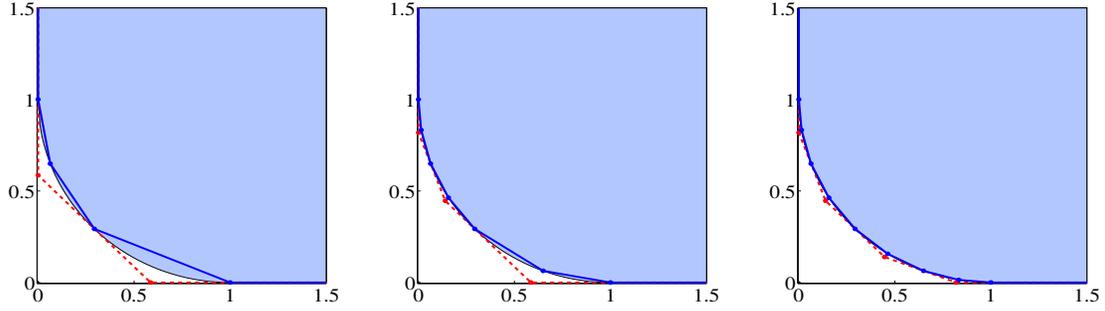}
\caption{The first three iterations of Algorithm~\ref{alg_1} with `break' in line 14.}
\label{figure1}
\end{figure}

After the third iteration ($\hat k=3$), Algorithm~\ref{alg_1} stops. It returns $9$ vertices of an inner approximation given by $\Gamma(\bar{\mathcal{X}})$, which coincide in this example with the 
finite weak $\epsilon$-solution $\bar{\mathcal{X}} $ to \eqref{(P)}: 
\begin{align*}
\bar{\mathcal{X}} 
&= \bigg\{ \left[\begin{array}{ccc}
0 \\
1 \end{array}\right], \left[\begin{array}{ccc}
0.0141 \\
0.8329 \end{array}\right], \left[\begin{array}{ccc}
0.0635 \\
0.6493 \end{array}\right], \left[\begin{array}{ccc}
0.1564 \\
0.4631 \end{array}\right], \left[\begin{array}{ccc}
0.2929 \\
0.2929 \end{array}\right], \\
& \:\:\:\:\:\:\:\:\:\:\:\:\: \left[\begin{array}{ccc}
0.4631  \\
0.1564 \end{array}\right], \left[\begin{array}{ccc}
0.6493 \\
0.0635 \end{array}\right], \left[\begin{array}{ccc}
0.8329 \\
0.0141 \end{array}\right], \left[\begin{array}{ccc}
1\\
0 \end{array}\right]\bigg\}.
\end{align*} 

The $8$ vertices $\mathcal{V}$ of the final outer approximation are calculated in line~$21$ of Algorithm~\ref{alg_1}, see left picture in Figure~\ref{finalfigure1}. If one uses the alternative version of the algorithm explained by Remark~\ref{alternative}, then the inner and outer approximation would only have four vertices, see right picture in Figure~\ref{finalfigure1}, and a
finite weak $\epsilon$-solution $\bar{\mathcal{X}} $ to \eqref{(P)} is calculated as
\begin{align*}
\bar{\mathcal{X}} 
&= \bigg\{ \left[\begin{array}{ccc}
0.0141 \\
0.8329 \end{array}\right], \left[\begin{array}{ccc}
0.1564 \\
0.4631 \end{array}\right], \left[\begin{array}{ccc}
0.4631 \\
0.1564 \end{array}\right], \left[\begin{array}{ccc}
0.8329 \\
0.0141 \end{array}\right] \bigg\}.
\end{align*}

\begin{figure}[ht]
\centering
\includegraphics[width=14cm,height=5cm]{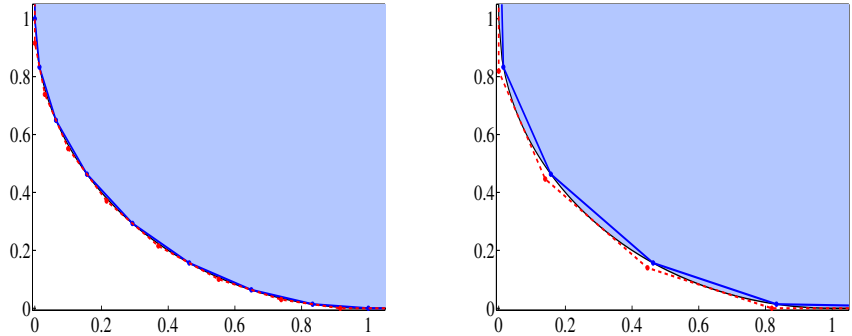}
\caption{The inner and outer approximations of the upper image provided by Algorithm \ref{alg_1} (left) and its alternative (right).}
\label{finalfigure1}
\end{figure}

Now, let us solve the same example using the dual variant of the algorithm. We use the same approximation error $\epsilon = 0.05$ and use the optional break in line 16. Note that $\mathcal
T  = \{ t=(t_1,t_2)\in \R^2 : 0 \leq t_1 \leq 1\}$. The initial non-vertical supporting hyperplane of $\mathcal{D}$ is found as $\{ t \in \R^2 : t_2 = 0.2929 \}$. The vertices of the initial outer approximation are then $(0,\; 0.2929)^T$ and $(1,\; 0.2929)^T$. Note that the current outer approximation is $\mathcal{D}_k$, while the current inner approximation is $\conv D^*(\bar{\mathcal{T}}_k)-K$, where $\bar{\mathcal{T}}_k$ denotes the `solution' set $\bar{\mathcal{T}}$ at the end of iteration step $k$. Figure~\ref{figure2} shows the approximations of the lower image after the first four iterations ($k=1,2,3,4$). The computational data can be seen in Table~\ref{table1}.

\begin{figure}[ht]
\hspace{-1.5cm}
\includegraphics[width=18.5cm,height=4.4cm]{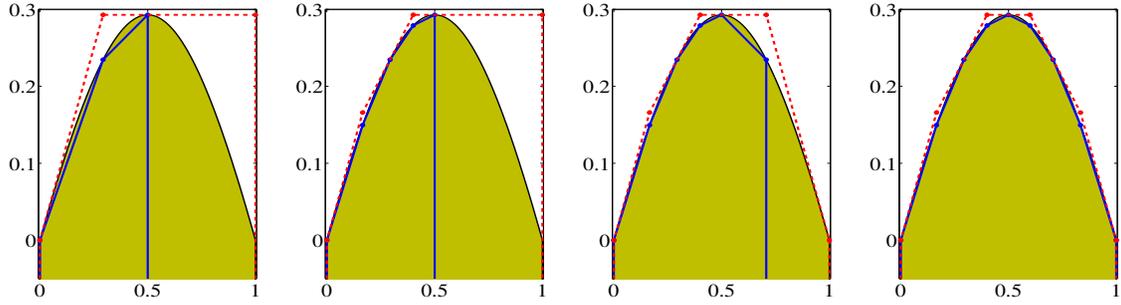}
\caption{The first four iterations of Algorithm~\ref{alg_2} with `break' in line 16.}
\label{figure2}
\end{figure}

After the fourth iteration the algorithm stops ($\hat k=4$). The algorithm calculated the $9$ vertices $D^*(\bar{\mathcal{T}})$ of the inner approximation of $\mathcal D$ (see right picture in Figure~\ref{figure2} or left picture in  Figure~\ref{finalfigure2}). An H-representation of the final outer approximation  of $\mathcal D$ is given by $\{y^*\in\R^q : \forall y \in \Gamma(\bar{\mathcal{X}}),\, \varphi(y,y^*)\geq 0,\;  Y^T w(y^*) \geq 0\}$, see Section~\ref{rems}. Its $10$ vertices can be calculated by vertex enumeration, see left picture in Figure~\ref{finalfigure2}.
The algorithm returns a finite weak $\epsilon$-solution $\bar{\mathcal{X}} $ to \eqref{(P)} as follows:
\vspace{-0.1cm}
\begin{align*}
\bar{\mathcal{X}} &= \bigg\{ \left[\begin{array}{ccc}
0 \\
1      \end{array}\right], \left[\begin{array}{ccc}
0.0192 \\
0.8049 \end{array}\right], \left[\begin{array}{ccc}
0.0761 \\
0.6173 \end{array}\right], \left[\begin{array}{ccc}
0.1685 \\
0.4445 \end{array}\right], \left[\begin{array}{ccc}
0.2929 \\
0.2929      \end{array}\right],  \\
& \:\:\:\:\:\:\:\:\:\:\:\:\:\:\:\:\:\:\:\:\:\:\:\:\:\:\:\:\:\:\:\:\: \left[\begin{array}{ccc}
0.4445  \\
0.1685 \end{array}\right], \left[\begin{array}{ccc}
0.6173 \\
0.0761 \end{array}\right], \left[\begin{array}{ccc}
0.8049 \\
0.0192 \end{array}\right], \left[\begin{array}{ccc}
1\\
0 \end{array}\right] \bigg\}. 
\end{align*}

\vspace{-0.1cm}\noindent
If one uses the alternative version of the algorithm explained by Remark~\ref{alternativedual}, then a finite weak $\epsilon$-solution $\bar{\mathcal{X}} $ to \eqref{(P)} is found as
\vspace{-0.1cm}
\begin{align*}
\bar{\mathcal{X}} = \bigg\{ \left[\begin{array}{ccc}
0 \\
1 \end{array}\right], \left[\begin{array}{ccc}
0.0761 \\
0.6173 \end{array}\right], \left[\begin{array}{ccc}
0.2929 \\
0.2929 \end{array}\right], \left[\begin{array}{ccc}
0.6173 \\
0.0761 \end{array}\right], \left[\begin{array}{ccc}
1 \\
0 \end{array}\right]\bigg\},
\end{align*}

\vspace{-0.1cm}\noindent
and the final inner and outer approximations of $\mathcal D$ are given as in the right picture of Figure~\ref{finalfigure2}.
\begin{figure}[ht]
\centering
\includegraphics[width=15cm,height=5.2cm]{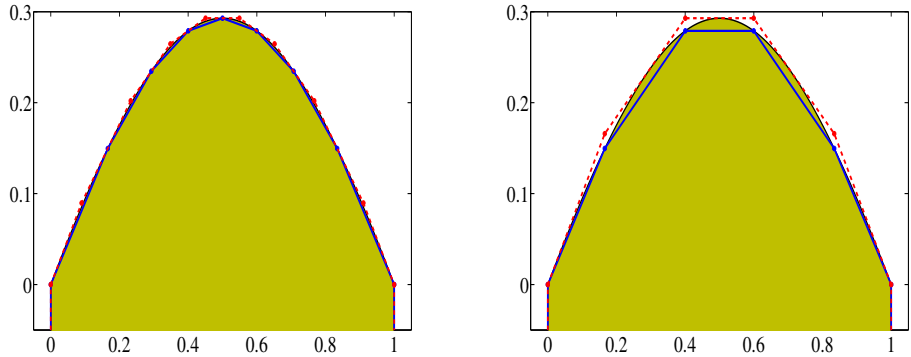}
\caption{The inner and outer approximations of the lower image $\mathcal D$  provided by Algorithm~\ref{alg_2} (left) and its alternative (right).}
\label{finalfigure2}
\end{figure}
 
\begin{table}[ht]
\caption{Computational data for Example~\ref{ex1}}
\centering
\begin{tabular}{ |l|l|l|l|l|l|l|l|l| }
\hline\hline
& & & & & & & & \\ [-2.3ex]
$\epsilon$ & alg. / variant & $\#$ opt. & $\#$ vert. enum. & $\abs{\bar{\mathcal{X}}}$ & $\abs{\bar{\mathcal{T}}}$ & $\abs{\bar{\mathcal{X}}_{alt}}$ & $\abs{\bar{\mathcal{T}}_{alt}}$ & time (s)\\ [0.5ex]
\hline\hline
\multirow{4}{*}{$0.01$}   	& \ref{alg_1} / break    & $17$  & $9$  & $17$ & $17$ & $8$  & $9$   & $8.37$ \\ 
							& \ref{alg_1} / no break & $17$  & $5$  & $17$ & $17$ & $8$  & $9$   & $8.26$ \\
							& \ref{alg_2} / break    & $19$  & $11$ & $17$ & $17$ & $8$  & $10$  & $8.92$ \\
							& \ref{alg_2} / no break & $19$  & $6$  & $17$ & $17$ & $8$  & $10$  & $8.85$ \\ \hline
\multirow{4}{*}{$0.001$}	& \ref{alg_1} / break    & $45$  & $23$ & $45$  & $45$  & $22$  & $23$  & $21.18$ \\
							& \ref{alg_1} / no break & $45$  & $7$  & $45$  & $45$  & $22$  & $23$  & $21.40$ \\
							& \ref{alg_2} / break    & $43$  & $23$ & $41$  & $41$  & $20$  & $22$  & $20.24$ \\
							& \ref{alg_2} / no break & $43$  & $8$  & $41$  & $41$  & $20$  & $22$  & $20.00$ \\ \hline
\end{tabular}
\label{table1}
\end{table}
\end{example}

\begin{example}
\label{ex2}
Consider the following problem with non-differentiable constraint function
\vspace{-0.1cm}
\begin{align*}
&\text{minimize~~} \Gamma(x) = \binom{(x_1-3)^2+(x_2-1)^2}{(x_1-1)^2 + (x_2-1)^2} \text{~with respect to~}\leq_{\R^2_+}\\
&\text{subject to~} |x_1| + 2|x_2| \leq 2.
\end{align*}

\vspace{-0.1cm}\noindent
The ordering cone is $C=\R^2_+$, and we fix $c^1 = [1,0]^T, c=[1,1]^T$ as before. This example is taken from \cite{ehrgott}, and it was used as an example which can not be solved by the algorithm provided in \cite{ehrgott}. Since we do not assume differentiability in order to use the algorithms provided here, the example can be solved. Figure~\ref{nondif_primal} shows the approximations of the upper and lower images generated by Algorithm~\ref{alg_1}, where the approximation error $\epsilon$ is taken as $0.01$. Computational data regarding this example can be seen in Table~\ref{table2}.

\begin{figure}[ht]
\centering
\includegraphics[width=15cm,height=5cm]{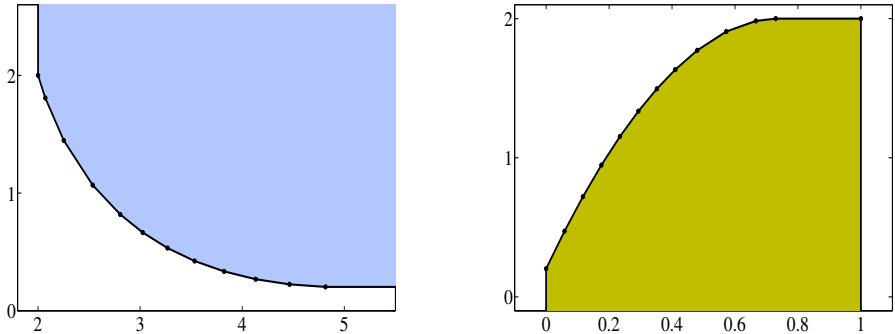}
\caption {The outer approximation of the upper image (left) and the inner approximation of the lower image (right) by Algorithm~\ref{alg_1} for $\epsilon = 0.01$ for Example~\ref{ex2}.}
\label{nondif_primal}
\end{figure}

\begin{table}[ht]
\caption{Computational data for Example~\ref{ex2}}
\centering
\begin{tabular}{ |l|l|l|l|l|l|l|l|l| }
\hline\hline
& & & & & & & & \\ [-2.3ex]
$\epsilon$ & alg. / variant & $\#$ opt. & $\#$ vert. enum. & $\abs{\bar{\mathcal{X}}}$ & $\abs{\bar{\mathcal{T}}}$ & $\abs{\bar{\mathcal{X}}_{alt}}$ & $\abs{\bar{\mathcal{T}}_{alt}}$ & time (s)\\ [0.5ex]
\hline\hline
\multirow{4}{*}{$0.01$}   	& \ref{alg_1} / break    & $25$  & $13$ & $24$ & $25$ & $12$  & $13$  & $16.52$ \\ 
							& \ref{alg_1} / no break & $25$  & $6$  & $24$ & $25$ & $12$  & $13$  & $15.18$ \\
							& \ref{alg_2} / break    & $27$  & $15$ & $25$ & $25$ & $12$  & $14$  & $17.59$ \\
							& \ref{alg_2} / no break & $27$  & $7$  & $25$ & $25$ & $12$  & $14$  & $17.39$ \\ \hline
\multirow{4}{*}{$0.001$}	& \ref{alg_1} / break    & $55$  & $28$ & $54$  & $55$ & $27$  & $28$  & $33.29$ \\
							& \ref{alg_1} / no break & $55$  & $7$  & $54$  & $55$ & $27$  & $28$  & $33.31$ \\
							& \ref{alg_2} / break    & $51$  & $27$ & $49$  & $49$ & $24$  & $26$  & $32.12$ \\
							& \ref{alg_2} / no break & $51$  & $8$  & $49$  & $49$ & $24$  & $26$  & $31.40$ \\ \hline
\end{tabular}
\label{table2}
\end{table}
\end{example}

\vspace{-0.6cm}
\begin{example}
\label{ex3}
Consider the following problem
\vspace{-0.2cm}
\begin{align*}
&\text{minimize~~} \Gamma(x) = (e^{x_1}+e^{x_4},\;e^{x_2}+e^{x_5},\;e^{x_3}+e^{x_6})^T \text{~with respect to }\leq_{\R^3_+}\\
& \text{subject to~~} x_1 + x_2 + x_3 \geq 0\\
&\hspace{1.95cm} 3x_1 + 6x_2 + 3x_3 + 4x_4 + x_5 + 4x_6 \geq 0\\
&\hspace{1.95cm} 3x_1 + x_2 + x_3 + 2x_4 + 4x_5 + 4x_6 \geq 0.
\end{align*}

\vspace{-0.2cm}\noindent
We fix $c^1=[1,0,0]^T$, $c^2=[0,1,0]^T$, and $c=[1,1,1]^T$. Note that Assumptions~\ref{assmp}~(b)-(d) hold, however the feasible region is not compact. Recall that one can still use the algorithms as long as the scalar problems have optimal solutions and in case the algorithms terminate, compare Remarks~3. and~4. in Section~\ref{rems}. We employ the convex optimization solver CVX, which detects whether the scalar problems are infeasible or unbounded, but not necessarily the case where a solution does not exist. If we apply Algorithm~\ref{alg_2} to this example, for the unit vectors $w=e^i$ ($i=1,2,3$), the solver does not detect that \eqref{P1} does not have a solution and returns an approximate solution $x$, where $\norm{\Gamma(x)}$ is very large. By numerical inaccuracy, we cannot ensure that the hyperplane $H^*(\Gamma(x))$ according to Proposition \ref{dualprop3} is non-vertical. This is the reason why the dual algorithm does not work for this example. However we can still use Algorithm~\ref{alg_1}. Figure~\ref{ex3_dual} shows the outer approximations to the upper image generated by Algorithm~\ref{alg_1} with approximation errors $0.1$ and $0.05$. The graphics have been generated by JavaView\footnote{by Konrad Polthier, http://www.javaview.de}. The numerical results can be seen in Table~\ref{table3}. 
This is an example where the algorithm proposed in \cite{ehrgott} does not terminate for certain $\epsilon>0$ and $p \in \Int \mathcal{P}$ (see \cite{ehrgott}) due to a different measure for the approximation error.
\begin{figure}[ht]
\centering
\includegraphics[width=7.7cm,height=5cm]{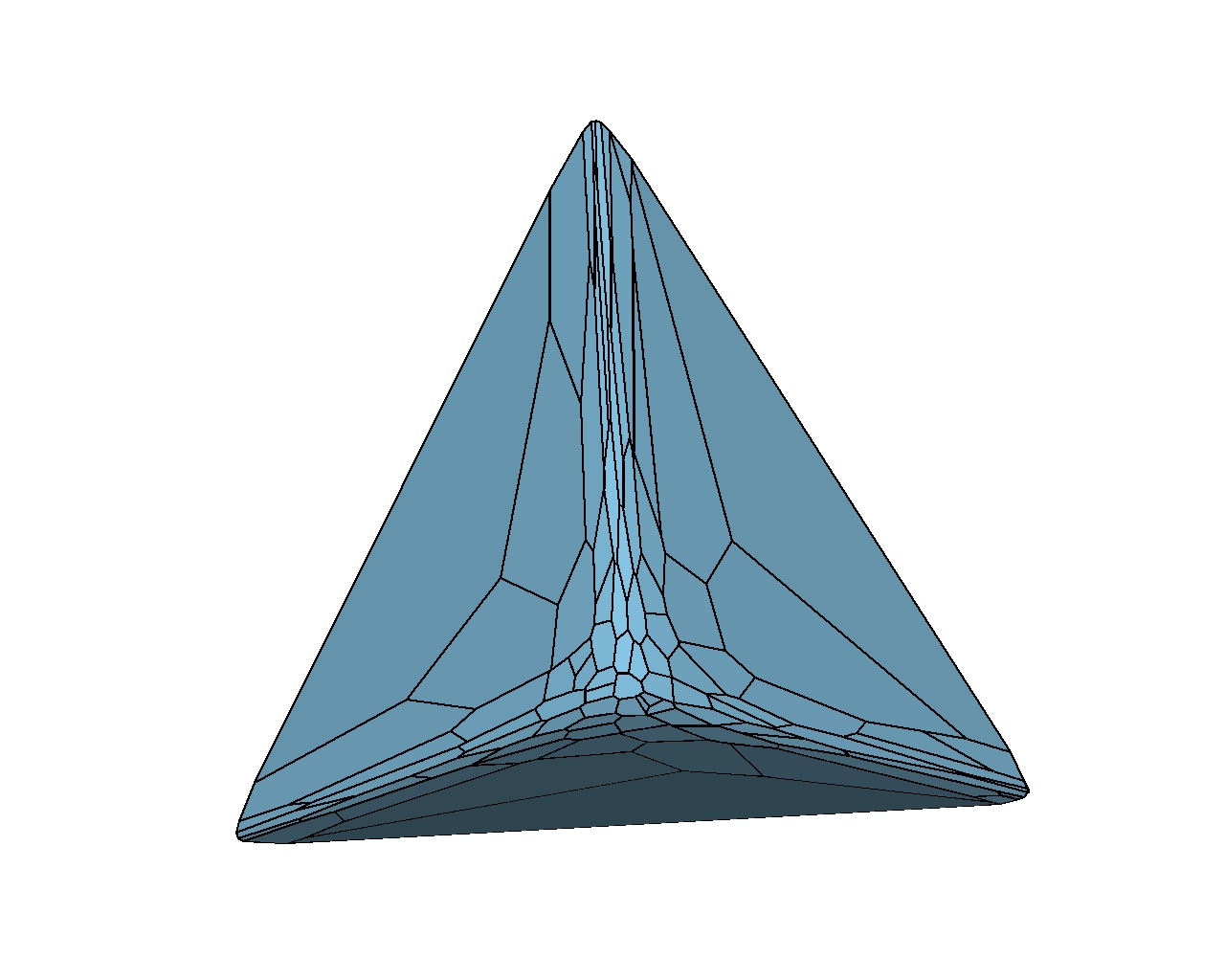}
\includegraphics[width=7.7cm,height=5cm]{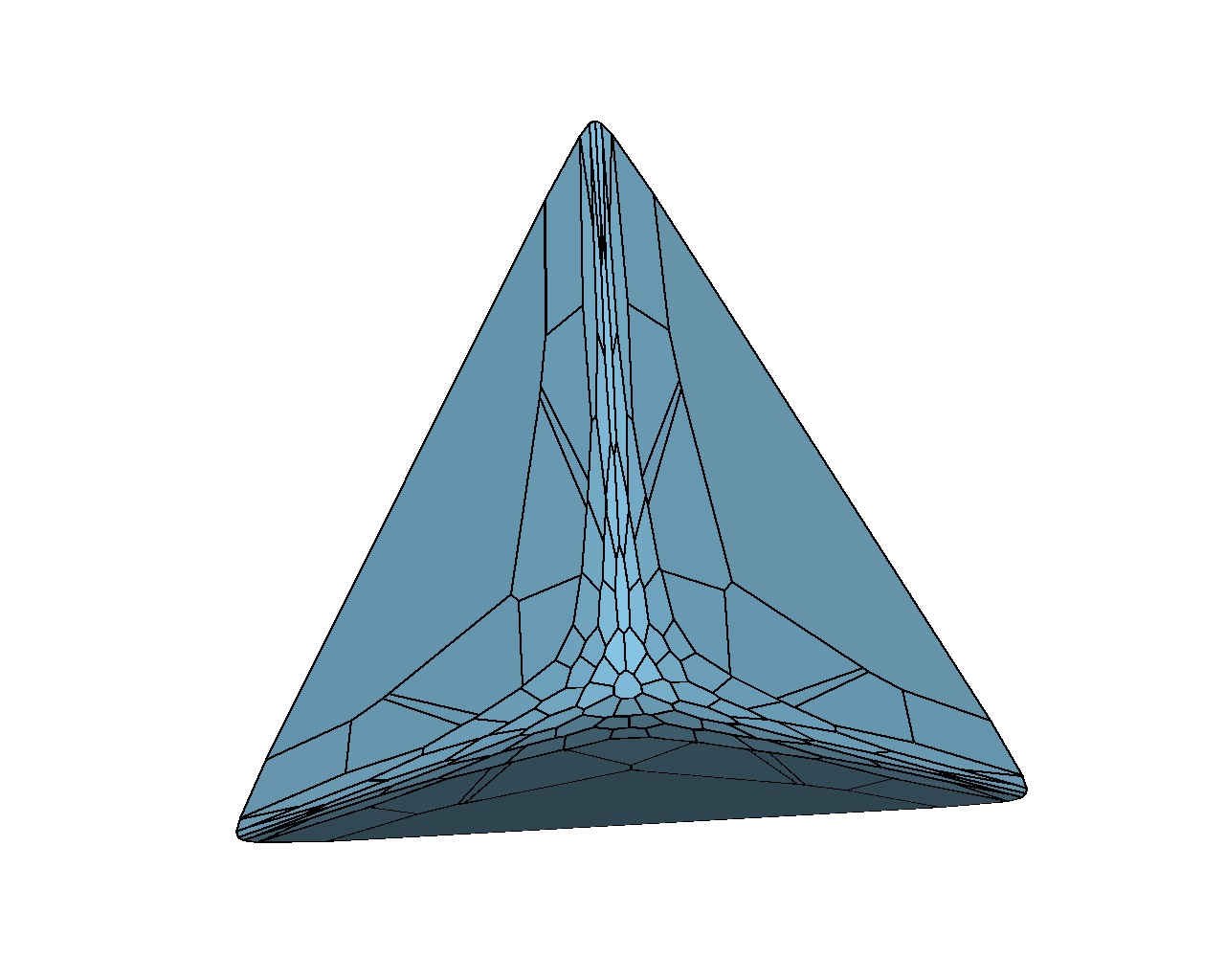}\\
\vspace{-0.5cm}
\includegraphics[width=7.7cm,height=5cm]{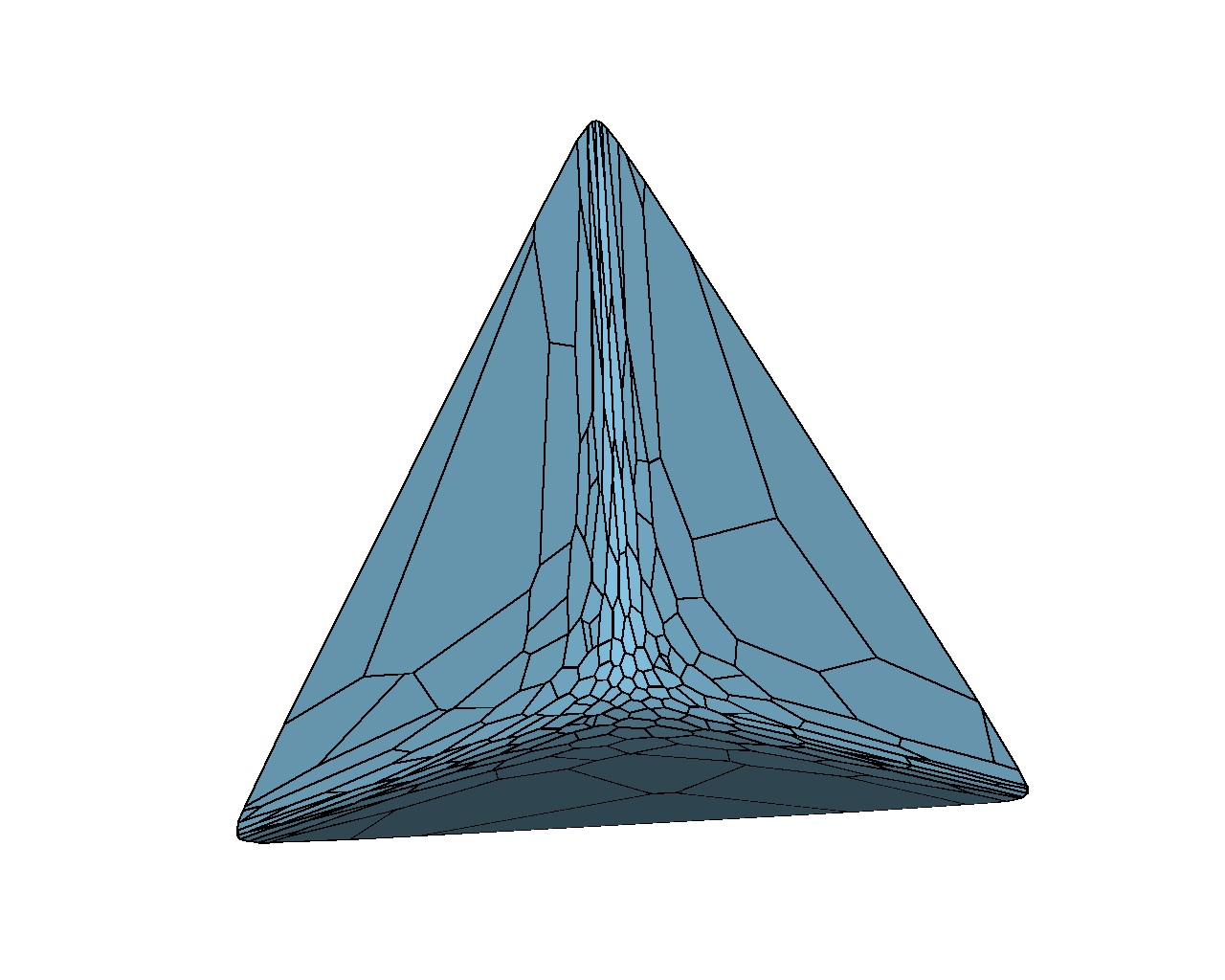}
\includegraphics[width=7.7cm,height=5cm]{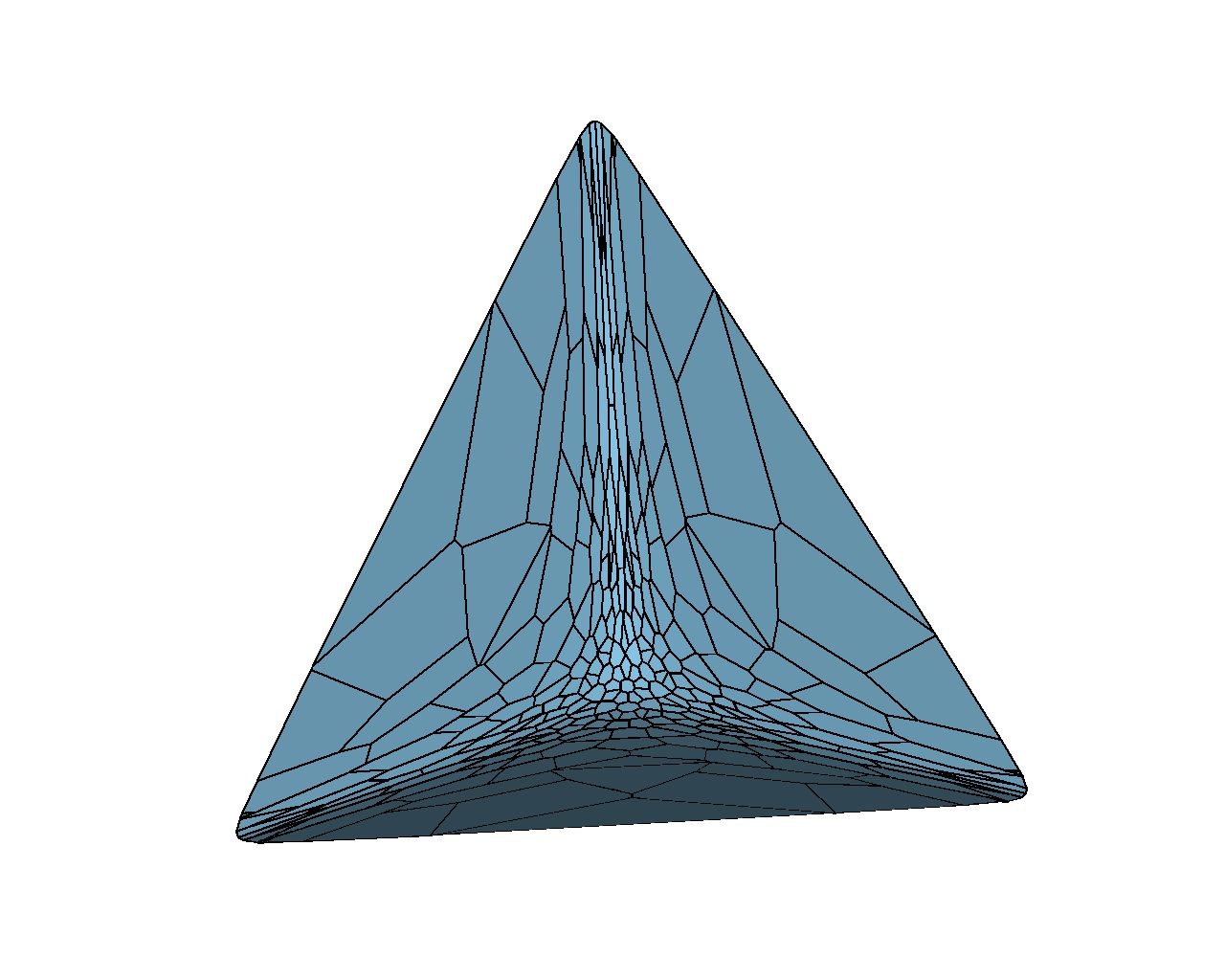}

\caption{The outer approximation of the upper image $\mathcal{P}$ for Example~\ref{ex3} by Algorithm~\ref{alg_1}; displayed: $y\in \mathcal{P}$ with $y_1+y_2+y_3\leq 36$; top left: `break', $\epsilon = 0.1$; top right: `no break', $\epsilon = 0.1$; bottom left: `break', $\epsilon = 0.05$; bottom right: `no break', $\epsilon = 0.05$.}
\label{ex3_dual}
\end{figure}
\end{example}

\begin{table}[ht]
\caption{Computational data for Example~\ref{ex3}}
\centering
\begin{tabular}{ |l|l|l|l|l|l|l|l|l| }
\hline\hline
& & & & & & & & \\ [-2.3ex]
$\epsilon$ & alg. / variant & $\#$ opt. & $\#$ vert. enum. & $\abs{\bar{\mathcal{X}}}$ & $\abs{\bar{\mathcal{T}}}$ & $\abs{\bar{\mathcal{X}}_{alt}}$ & $\abs{\bar{\mathcal{T}}_{alt}}$ & time (s)\\ [0.5ex]
\hline\hline
\multirow{2}{*}{$0.1$}   	& \ref{alg_1} / break    & $107$  & $34$ & $107$  & $107$  & $72$ & $35$  & $284.68$\\ 
							& \ref{alg_1} / no break & $133$  & $7$  & $133$  & $133$  & $87$ & $46$  & $355.29$\\
							  \hline
\multirow{2}{*}{$0.05$}		& \ref{alg_1} / break    & $232$ & $72$ & $231$ & $232$ & $158$ & $73$ & $670.55$\\
							& \ref{alg_1} / no break & $328$ & $8$  & $328$ & $328$ & $222$ & $106$ & $906.91$\\
							  \hline
\end{tabular}
\label{table3}
\end{table}

\begin{example}%
\label{caginex1}
In this example we study the calculation of  set-valued convex risk measures. It is known that polyhedral set-valued convex risk measures can be calculated by Benson's algorithm for LVOPs (see \cite{superhedging,avar,lvop}). Here, we show that (non-polyhedral) set-valued convex risk measures can be calculated approximately using the algorithms provided here. 
Consider a financial market consisting of $d$ assets, which can be traded at discrete time $t = 0,1,\ldots,T$. Let $(\Omega, \mathcal{F},(\mathcal{F})_{t=0}^T,\P)$ be a finite probability space, where $\Omega=\{\omega_1,\ldots,\omega_N\}$, $\P(\omega_i) = p_i$ such that $p_i \in (0,1]$, and $\mathcal{F}=\mathcal{F}_T$. Assume that the market is defined by the $\mathcal{F}_t$ adapted process $(K_t)_{t=0}^T$ of solvency cones, which represent the exchange rates and proportional transaction costs between the $d$ assets (see \cite{kabanov}). Note that $K_t(\omega_n)$ is a polyhedral closed convex cone, with $\R^d_+ \subseteq K_t(\omega_n) \neq \R^d$ for $t = 0,\ldots,T$, $n = 1,\ldots, N$.

In this setting Ararat, Rudloff, and Hamel \cite{cagin} consider set-valued shortfall risk measures on $L^{\infty}_d=L^{\infty}_d(\Omega, \mathcal{F}_T,\P)$, the linear space of the equivalence classes of $\mathcal{F}_T$-measurable, $\P$-a.s. bounded, $d$-dimensional random vectors. Let $l = (l_1,\ldots,l_d):L^{\infty}_d\rightarrow \R^d$ be a loss function such that $l_i:L^{\infty}_d\rightarrow \R$ is convex and increasing for $i=1,\ldots,d$. We consider here the one-period case $T=1$ only. It was shown that the market extension of the set-valued
 $l$-shortfall risk measure is
\vspace{-0.1cm}
\begin{align*}
R^{mar}(X) &= \{m\in\R^d: \: \E\left[l(-X+Y-m+y)\right] \in \{x_0\}-A,\:\: Y \in L^{\infty}_d(K_1),\:\:y\in K_0 \},
\end{align*}

\vspace{-0.1cm}\noindent
where $X \in L^{\infty}_d$, $L^{\infty}_d(K_1) := \{Z \in L^{\infty}_d: \; Z \in K_1 \;\P\text{-a.s.}\}$, $A$ is a convex upper closed set, that is $\cl (\conv (A+\R^d_+)) = A $, and $0\in \bd A$. It is shown in \cite{cagin} that $R^{mar}(X) = R^{mar}(X) + K_0$. For a given random vector $X$, the set $R^{mar}(X)$ is the upper image of the following vector optimization problem
\vspace{-0.1cm}
\begin{align*}
&\text{minimize~~} m \in \mathbb{R}^d \text{~~~with respect to~} \leq_{K_0} \\
&\text{subject to~~} \E\left[l(-X+Y-m+y)\right] \in \{x_0\}-A,\\
&\qquad\qquad\quad Y \in L^{\infty}_d(K_1),\; y \in K_0,
\end{align*}

\vspace{-0.1cm}\noindent
and thus can be approximated by the algorithms presented in this paper, whenever the set $A$ is polyhedral, say $A=\{z\in\R^d:(a^1)^Tz\geq b_1,\ldots,(a^s)^Tz\geq b_s\}$. Then, the constraint $\E\left[l(-X+Y-m+y)\right] \in \{x_0\}-A$ can be written as $$g_i(Y,y,m):= (a^i)^T\big[-x_0+\E\left[l(-X+Y-m+y)\right]\big] \leq -b_i, \:\:\: i=1,\ldots,s.$$ As $A$ is upper closed we have $a^i \in\R^d_+$, which implies that $g_i$ is convex. It is clear that the second and third set of constraints, and the objective function are linear. Thus, the problem is a CVOP with ordering cone $K_0$. Similar to Example~\ref{ex3}, Assumptions~\ref{assmp}~(b)-(d) hold, however the feasible region is not compact as $m\in\R^d$ is not bounded. Remarks~3. and~4. in Section~\ref{rems} explain the implications of that for the algorithms. 

For a numerical example, set $d=4$, $T=1$, $|\Omega| = 8$, and $p_n = 0.125$ for $n=1,\ldots,8$. We fix $\hat K_0 \in \R^{4\times 12}$, and $\hat K_T(\omega_n) \in \R^{4\times 12}$, whose columns are the generating vectors of corresponding solvency cones $K_0$ and $K_T(\omega_n)$, $n=1,\ldots,8$. Let $x_0 = 0 \in \R^4$, and $A = \{z\in\R^4:e^Tz\geq 0\}$, where $e$ is the vector of ones. We calculate $R^{mar}(X)$ for $X\in L^{\infty}_4$ being the payoff of an outperformance option. The vector-valued loss function is taken as $l(x) = (l_1,\ldots, l_4)^T$, with $l_i(x) = e^{x_i}-1$.

We modeled the problem as a CVOP, where there are $4$ objectives, $112$ decision variables, and $109$ constraints. The ordering cone is $K_0 \supsetneq \R^4_+$ with $12$ generating vectors, and we fix $c^1=[1,0,0,0]^T$, $c^2=[0,1,0,0]^T$, $c^3 = [0,0,1,0]^T$, and $c=[1,1,1,1]^T$. We try both algorithms to solve the problem for different values of $\epsilon$. It turns out that the solver fails to solve \eqref{P2} for some of the vertices of the current outer approximation $\mathcal{P}_k$ of the upper image. However, \eqref{P1} can be solved for each $w=w(t)$, for the vertices $t$ of the outer approximations $\mathcal{D}_k$ of the lower image. Thus, Algorithm~\ref{alg_2} provides a finite weak $\epsilon$-solution to \eqref{(P)}, and a finite $\epsilon$-solution to \eqref{(D)}. Also, we only use the variant with `break' as it turns out that it can solve the problem in less time compared to the variant without `break'. Table~\ref{table6} shows some computational data of Algorithm~\ref{alg_2}.

\begin{table}[ht]
\caption{Computational data for Example~\ref{caginex1}}
\centering
\begin{tabular}{ |l|l|l|l|l|l|l|l|l| }
\hline\hline
& & & & & & & & \\ [-2.3ex]
$\epsilon$ & alg. / variant & $\#$ opt. & $\#$ vert. enum. & $\abs{\bar{\mathcal{X}}}$ & $\abs{\bar{\mathcal{T}}}$ & $\abs{\bar{\mathcal{X}}_{alt}}$ & $\abs{\bar{\mathcal{T}}_{alt}}$ & time (s)\\ [0.5ex]
\hline\hline
$0.01$  	& \ref{alg_2} / break  & $152$  & $22$  & $151$   & $148$   & $19$   & $132$   & $3046.5$ \\ 
$0.005$		& \ref{alg_2} / break  & $255$  & $38$  & $254$   & $249$   & $35$   & $219$   & $4941.6$ \\
$0.001$		& \ref{alg_2} / break  & $1433$ & $188$ & $1429$  & $1410$  & $185$  & $1247$  & $26706$ \\
$0.0005$	& \ref{alg_2} / break  & $3109$ & $380$ & $3107$  & $3075$  & $377$  & $2731$  & $58397$ \\ \hline
\end{tabular}
\label{table6}
\end{table}
\end{example}

\bibliographystyle{plain}
\bibliography{reportbib}

\begin{thebibliography}{10}

\bibitem{cagin}
\c{C}. Ararat, A.~H. Hamel, and B.~Rudloff.
\newblock Set-valued shortfall and divergence risk measures.
\newblock {\em submitted}, 2014.

\bibitem{benson}
H.~P. Benson.
\newblock An outer approximation algorithm for generating all efficient extreme
  points in the outcome set of a multiple objective linear programming problem.
\newblock {\em Journal of Global Optimization}, 13:1--24, 1998.

\bibitem{BreFukMar98}
D.~Bremner, K.~Fukuda, and A.~Marzetta.
\newblock Primal-dual methods for vertex and facet enumeration.
\newblock {\em Discrete Computational Geometry}, 20(3):333--357, 1998.

\bibitem{csirmaz}
L.~Csirmaz.
\newblock Using multiobjective optimization to map the entropy region of four
  random variables.
\newblock {\em preprint}, 2013. http://eprints.renyi.hu/66/2/globopt.pdf.

\bibitem{cvx}
Inc. CVX~Research.
\newblock \uppercase{CVX}: Matlab software for disciplined convex programming,
  version 2.0 beta., September 2012.

\bibitem{ehr_dual}
M.~Ehrgott, A.~L\"ohne, and L.~Shao.
\newblock A dual variant of {B}enson's outer approximation algorithm.
\newblock {\em Journal Global Optimization}, 52(4):757--778, 2012.

\bibitem{ehrgott}
M.~Ehrgott, L.~Shao, and A.~Sch\"obel.
\newblock An approximation algorithm for convex multi-objective programming
  problems.
\newblock {\em Journal of Global Optimization}, 50(3):397--416, 2011.

\bibitem{survey_ehrgott}
M.~Ehrgott and M.~M. Wiecek.
\newblock Multiobjective programming.
\newblock In J.~Figueira, S.~Greco, and M.~Ehrgott, editors, {\em Multicriteria
  Decision Analysis: State of the Art Surveys}, pages 667--722. Springer
  Science + Business Media, 2005.

\bibitem{boyd}
M.~C. Grant and S.~P. Boyd.
\newblock Graph implementations for nonsmooth convex programs.
\newblock In V.~Blondel, S.~Boyd, and H.~Kimura, editors, {\em Recent Advances
  in Learning and Control}, volume 371 of {\em Lecture Notes in Control and
  Information Sciences}, pages 95--110. Springer, London, 2008.

\bibitem{lagrange}
A.~H. Hamel and A.~L\"ohne.
\newblock Lagrange duality in set optimization.
\newblock {\em Journal of Optimization Theory and Applications}, 2013, DOI:
  10.1007/s10957-013-0431-4.

\bibitem{lvop}
A.~H. Hamel, A.~L\"ohne, and B.~Rudloff.
\newblock A {B}enson type algorithm for linear vector optimization and
  applications.
\newblock {\em Journal of Global Optimization}, 2013, DOI:
  10.1007/s10898-013-0098-2.

\bibitem{avar}
A.~H. Hamel, B.~Rudloff, and M.~Yankova.
\newblock Set-valued average value at risk and its computation.
\newblock {\em Mathematics and Financial Economics}, 7(2):229--246, 2013.

\bibitem{geometric}
F.~Heyde.
\newblock Geometric duality for convex vector optimization problems.
\newblock {\em Journal of Convex Analysis}, 20(3):813--832, 2013.

\bibitem{geometric_lp}
F.~Heyde and A.~L\"ohne.
\newblock Geometric duality in multiple objective linear programming.
\newblock {\em SIAM Journal of Optimization}, 19(2):836--845, 2008.

\bibitem{freshlook}
F.~Heyde and A.~L\"ohne.
\newblock Solution concepts in vector optimization: a fresh look at an old
  story.
\newblock {\em Optimization}, 60(12):1421--1440, 2011.

\bibitem{jahn}
J.~Jahn.
\newblock {\em Vector Optimization - Theory, Applications, and Extensions}.
\newblock Springer, 2004.

\bibitem{kabanov}
Y.~M. Kabanov.
\newblock Hedging and liquidation under transaction costs in currency markets.
\newblock {\em Finance and Stochastics}, 3:237--248, 1999.

\bibitem{lohne}
A.~L\"ohne.
\newblock {\em Vector Optimization with Infimum and Supremum}.
\newblock Springer, 2011.

\bibitem{superhedging}
A.~L\"ohne and B.~Rudloff.
\newblock An algorithm for calculating the set of superhedging portfolios in
  markets with transaction costs.
\newblock {\em International Journal of Theoretical and Applied Finance},
  Forthcoming, 2013.

\bibitem{luc}
D.~Luc.
\newblock {\em Theory of Vector Optimization}, volume 319 of {\em Lecture Notes
  in Economics and Mathematical Systems}.
\newblock Springer Verlag, 1989.

\bibitem{rockafellar}
R.~T. Rockafellar.
\newblock {\em Convex Analysis}.
\newblock Princeton University Press, 1970.

\bibitem{survey_ruzika}
S.~Ruzika and M.~M. Wiecek.
\newblock Approximation methods in multiobjective programming.
\newblock {\em Journal of Optimization Theory and Applications},
  126(3):473--501, September 2005.

\bibitem{shaoLVOP}
L.~Shao and M.~Ehrgott.
\newblock Approximately solving multiobjective linear programmes in objective
  space and an application in radiotherapy treatment planning.
\newblock {\em Mathematical Methods of Operations Research}, 68(2):257--276,
  2008.

\bibitem{dualalgorithm}
L.~Shao and M.~Ehrgott.
\newblock Approximating the nondominated set of an {MOLP} by approximately
  solving its dual problem.
\newblock {\em Mathematical Methods of Operations Research}, 68(3):469--492,
  2008.

\end{thebibliography}

\end{document}